\numberwithin{equation}{section}
\newtheorem{theorem}{\sc Theorem}[section]
\newtheorem{corollary}[theorem]{\sc Corollary}
\newtheorem{definition}[theorem]{\sc Definition}
\newtheorem{lemma}[theorem]{\sc Lemma}
\newtheorem{proposition}[theorem]{\sc Proposition}
\theoremstyle{remark}
\newtheorem{remark}[theorem]{\sc Remark}
\newcommand{\ol}[1]{\overline{#1}}
\newcommand{\ul}[1]{\underline{#1}}
\newcommand{\R}{\mathbb{R}}
\newcommand{\cal}{\mathcal}
\renewcommand{\epsilon}{\varepsilon}
\renewcommand{\emptyset}{\varnothing}
\renewcommand{\phi}{\varphi}
\renewcommand{\le}{\leqslant}
\renewcommand{\ge}{\geqslant}
\newcommand{\Con}{\operatorname{C}}
\newcommand{\UC}{\operatorname{UC}}
\newcommand{\Lip}{\operatorname{C^{0,1}}}
\newcommand{\Liploc}{\operatorname{C_{loc}^{0,1}}}
\begin{document}
	
\title[Homogenization of viscous HJ equations]{Homogenization of nonconvex viscous Hamilton-Jacobi equations in stationary ergodic media in one dimension}
	
\author[E.\,Kosygina]{Elena Kosygina}
\address{Elena Kosygina\\ Department of Mathematics\\ Baruch College\\  One Bernard Baruch Way\\ Box B6-230, New York, NY 10010\\ USA}
\email{elena.kosygina@baruch.cuny.edu}
\urladdr{http://www.baruch.cuny.edu/math/elenak/}
\thanks{E.\,Kosygina was partially supported by the Simons Foundation (Award \#523625).}
	
\author[A.\,Yilmaz]{Atilla Yilmaz}
\address{Atilla Yilmaz\\ Department of Mathematics\\ Temple University\\ 1805 North Broad Street, Philadelphia, PA 19122, USA}
\email{atilla.yilmaz@temple.edu}
\urladdr{http://math.temple.edu/$\sim$atilla/}
\thanks{A.\,Yilmaz was partially supported by the Simons Foundation (Award \#949877).}
	
\date{March 23, 2024}
	
\subjclass[2020]{35B27, 35F21, 60G10.} 
\keywords{Second-order quasilinear partial differential equations, periodic homogenization, stochastic homogenization, viscosity solution, sublinear corrector}

\begin{abstract} 

We establish homogenization for nondegenerate viscous Hamilton-Jacobi equations in one space dimension when the diffusion coefficient $a(x,\omega) > 0$ and the Hamiltonian $H(p,x,\omega)$ are general stationary ergodic processes in $x$. Our result is valid under mild regularity assumptions on $a$ and $H$ plus standard coercivity and growth assumptions (in $p$) on the latter. In particular, we impose neither any additional condition on the law of the media nor any shape restriction on the graph of $p\mapsto H(p,x,\omega)$. Our approach consists of two main steps: (i) constructing a suitable candidate $\ol H$ for the effective Hamiltonian; (ii) proving homogenization. In the first step, we work with the set $E$ of all points at which $\ol H$ is naturally determined by correctors with stationary derivatives. We prove that $E$ is a closed subset of $\R$ that is unbounded from above and below, and, if $E\neq \R$, then $\ol H$ can be extended continuously to $\R$ by setting it to be constant on each connected component of $E^c$. In the second step, we use a key bridging lemma, comparison arguments and several general results to verify that homogenization holds with this $\ol H$ as the effective Hamiltonian.

\end{abstract}
	
\maketitle

\section{Introduction}

We consider the homogenization problem for a family of viscous
Hamilton-Jacobi (HJ) equations (as $\epsilon\to 0$)
\begin{equation}\label{eq:hjt}
	\partial_t u^\epsilon = \epsilon a\left(\frac{x}{\epsilon},\omega\right) \partial^2_{xx} u ^\epsilon+H\left(\partial_x u^\epsilon,\frac{x}{\epsilon},\omega\right),\quad (t,x)\in (0,\infty)\times\R,
\end{equation}
with $u^\epsilon(0,x,\omega)=g(x),\ x\in\R$, where the diffusion
coefficient $a(x,\omega) > 0$ and the Hamiltonian $H(p,x,\omega)$ are
stationary ergodic processes with respect to the shifts in $x\in\R$.

Under a set of standard coercivity and growth assumptions in the
momentum variable $p$ on $H$ and regularity assumptions on $a$ and $H$
under the shifts in $x$ (see Section \ref{sub:stand} and
  Remark~\ref{rem:suff}), we show that equation \eqref{eq:hjt}
homogenizes. This means that there is a continuous coercive nonrandom
function $\ol{H}:\R\to\R$, the effective Hamiltonian, such that, with
probability 1, for every uniformly continuous function $g$ on $\R$,
the solutions $u^\epsilon$ of \eqref{eq:hjt} satisfying
$u^\epsilon(0,\,\cdot\,,\omega) = g$ converge locally uniformly on
$[0,\infty)\times \R$ as $\epsilon\to 0$ to the unique solution
$\ol{u}$ of the equation
\[ \partial_t \ol{u} = \ol{H}(D\ol{u}), \quad (t,x)\in (0,\infty)\times\R. \]
Our work has three main features:
\begin{itemize}
\item [(i)] we do not impose any shape restriction, such as convexity or quasiconvexity, on the graph of $p\mapsto H(p,x,\omega)$;
\item [(ii)] we do not make any additional assumptions on the stationary ergodic media and treat all cases, be it periodic or stochastic, in a unified way;
\item [(iii)] our arguments are general and they do not use any decomposition, induction, or gluing procedures based on the specifics of $a$ and $H$.
\end{itemize}

We recall that convexity of $H$ in the momentum variable plays no role
in the periodic homogenization of HJ equations.
Using the compactness of the torus, general homogenization results in all dimensions have been established in
\cite{LPV, Evans92}.
The non-compact stationary ergodic setting offers many challenges and requires new methods. For the past 25 years, it has been the subject of extensive research.
Below, to stay focused, we only mention a fraction of the pertaining literature.

When the Hamiltonian is convex in $p$, stochastic homogenization
of inviscid and viscous HJ equations in all spatial
dimensions ($x\in\R^d$, $d\ge 1$) is well-known, \cite{Sou99, RT,
  LS_viscous, KRV, LS_revisited, AS12, AT14}. The effective
Hamiltonian is convex.

In the inviscid case, stochastic homogenization results have been
extended to Hamiltonians which are quasiconvex in $p$ (i.e., all
sublevel sets $\{p\in\R^d:\,H(p,x,\omega)\le \lambda\}$ are convex),
\cite{DS09} ($d=1$), \cite{AS} ($d\ge 1$). The effective Hamiltonian
in this case is quasiconvex. In the viscous case, as we recently
demonstrated in \cite{KY23+}, quasiconvexity of $H$ does not imply
quasiconvexity of $\ol{H}$. Except for $d=1$ (\cite{Y21b,D24+a,D24+b},
and the current paper), the question whether viscous HJ equations with
quasiconvex Hamiltonians homogenize remains open.

Over the last decade, homogenization was proven for many different
classes of stochastic HJ equations with non-quasiconvex Hamiltonians,
\cite{ATY_1d, Gao16, DK17, YZ19, KYZ20, DK22, DKY23+} ($d=1$) and
\cite{ATY_nonconvex, CaSo17, FS, AC18, QTY, Gao19} ($d\ge
1$). Nevertheless, counterexamples show that, in both inviscid and
viscous cases, homogenization for $d\ge 2$ can fail if the otherwise
standard coercive Hamiltonian $H(\cdot,x,\omega)$ has a strict saddle
point, \cite{Zil, FS, FFZ}.

These positive and negative results reinforced our interest in completing the
qualitative homogenization picture in the one-dimensional viscous
case. We recall that, when $a\equiv 0$, the starting point of the analysis
in \cite{ATY_1d, Gao16} was the already mentioned fact that
\eqref{eq:hjt} with quasiconvex $H$ homogenizes and produces a
quasiconvex $\ol{H}$. The general nonconvex $H$ was treated by
approximation, decomposition according to the relative magnitudes of the fluctuations of $H$ in $p$ and $x$, and careful gluing. Carrying out a
similar procedure in the viscous case presented serious obstacles. To
start, one needed a result for viscous HJ equations with quasiconvex Hamiltonians. This was partially
achieved in \cite{Y21b}, which showed homogenization for
$a(x,\omega)>0$ and $H(p,x,\omega)=G(p)+V(x,\omega)$ with $G$
quasiconvex under standard assumptions plus an additional {\em scaled
  hill condition} on the diffusion-potential pair $(a,V)$.
We note that the {\em hill and valley condition} was first introduced
in \cite{YZ19, KYZ20}. Its weaker {\em scaled} version (see (S) and
Remark 2.3 in \cite{DKY23+} and Appendix B of \cite{DK22}) combined
with the result of \cite{CaSo17} on the existence of correctors allowed to
significantly expand the class of homogenizable equations
\eqref{eq:hjt}, \cite{DK22}. Recently, starting from the result of
\cite{Y21b}, in a joint work with A.\,Davini we were able to further
adapt the decomposition method and prove homogenization for
$a(x,\omega)>0$ and $H(p,x,\omega)=G(p)+V(x,\omega)$ under the scaled
hill and valley condition on $(a,V)$ for general $G$ with superlinear
growth, \cite{DKY23+}.

The wish to remove the scaled hill and valley condition prompted us to return to
the special case of periodic environments which by default do not satisfy this condition. The effort resulted in
the already mentioned paper \cite{KY23+}, which demonstrated, in
particular, that homogenization of equations \eqref{eq:hjt} with
$a\equiv 1$ and Hamiltonians not satisfying the scaled hill and
valley condition could produce effective Hamiltonians that are
qualitatively different than those appearing in the inviscid case
or under the scaled hill and valley condition. This discovery brought us
back to the ``drawing board'' and fueled an attempt to construct
$\ol{H}$ solely from the information collected from correctors, prove
homogenization for linear initial data, and use the results of
\cite{DK17} to establish homogenization for general $g$.

We succeeded in doing that for \eqref{eq:hjt} with $a>0$. As we mentioned
at the beginning, our approach relies only on standard assumptions on
$a$ and $H$ and does not use any information about the local extrema of
$H$ in $p$ or the law of the random medium (apart from stationarity and
ergodicity). The trade-off is that it yields only basic
properties of the effective Hamiltonian. Namely, $\ol{H}$ is coercive,
locally Lipschitz, and satisfies the same upper and lower bounds in
$p$ as the original $H$. This is similar to the case of
corrector-based periodic homogenization (which is naturally covered
under our assumptions). While our arguments crucially depend on
the nondegeneracy assumption $a>0$, we believe that we finally ``got
it right'' as our approach is both general and relatively simple.  

In conclusion, we would like to remark that the ``decomposition and
gluing method'' developed in \cite{ATY_1d,Gao16,QTY,Y21a} for the
inviscid case and extended to the viscous case under various sets of
assumptions in the cited above papers provides much more details about
$\ol{H}$. For example, $\ol{H}$ inherits
quasiconvexity from $H$ when $a>0$, $H(p,x,\omega)=G(p)+V(x,\omega)$,
and $(a,V)$ satisfies the scaled hill condition, \cite{Y21b}, or
when $\min_{\R}a(\,\cdot\,,\omega)=0$ and $H$ is general superlinear in
$p$, \cite{DS09, D24+b}.

\smallskip

{\bf Organization of the paper.} In Sections~\ref{sub:stand} and
\ref{sub:main} we state our assumptions and the main
result. Section~\ref{sub:ps} gives the outline of the proof. The
ingredients of the proof include existence of solutions of the ODE for
the derivatives of correctors (Section~\ref{sec:ode}) and construction
and properties of $\ol{H}$ including an important bridging lemma
(Section~\ref{sec:effective}). The proof of homogenization is given in
Section~\ref{sec:homogenization}.

\section{Main result}\label{sec:main}

\subsection{Standing assumptions}\label{sub:stand}

Throughout the paper, for any domain of the form $X = I_1\times I_2$ or $X = I_2$, where $I_1\subset[0,\infty)$ and $I_2\subset\R$ are open intervals,
$\Con(X)$, $\UC(X)$, $\Lip(X)$ and $\Liploc(X)$ stand for the space of continuous, uniformly continuous, Lipschitz continuous and locally Lipschitz continuous real-valued functions on $X$, respectively.
Similarly, $\Con^k(X)$, $k\in\{1,2\}$, stand for the space of real-valued functions on $X$ with continuous derivatives of order $k$. These definitions extend to the closure of $X$ as usual.

Let $(\Omega,\mathcal{B}(\Omega))$ be a Polish space endowed with its Borel
$\sigma$-algebra, $\mathbb{P}$ a complete probability measure on
$(\Omega,\mathcal{F})$, where $\mathcal{F}$ is the completion of $\mathcal{B}(\Omega)$ with respect to $\mathbb{P}$, and $\tau_x:\Omega\to\Omega$, $x\in\R$, a
group of measure-preserving transformations such that
$(x,\omega)\mapsto \tau_x\omega$ is $(\mathcal{B}(\R)\otimes\mathcal{F})/\mathcal{F}$-measurable. Assume that
$\mathbb{P}$ is ergodic under this group of transformations, i.e., if
$A\in {\cal F}$ satisfies $\cap_{x\in\R}\tau_xA=A$, then
$\mathbb{P}(A) \in\{0,1\}$.  We write $\mathbb{E}[\,\cdot\,]$ to denote
expectation with respect to $\mathbb{P}$.

We consider a nondegenerate viscous Hamilton-Jacobi (HJ) equation
\begin{equation}\label{eq:parabol}
	\partial_t u^\epsilon = \epsilon a\left(\frac{x}{\epsilon},\omega\right) \partial^2_{xx} u ^\epsilon+H\left(\partial_x u^\epsilon,\frac{x}{\epsilon},\omega\right),  \quad (t,x)\in (0,\infty)\times\R,
\end{equation}
with $\epsilon > 0$ and $\omega\in\Omega$, where the measurable functions
\[  a:\R\times\Omega\to(0,1]\quad\text{and}\quad H:\R\times\R\times\Omega\to\R \]
satisfy the following conditions:
\begin{align}
	&\text{$a$ is stationary, i.e.,}\ a(x,\omega) = a(0,\tau_x\omega)\ \text{for all}\ (x,\omega)\in\R\times\Omega;\tag{$\mathrm{A1}$}\label{A1}\\
	&\text{there exists a $\kappa>0$ such that}\tag{$\mathrm{A2}$}\label{A2}\\
	&\left|\sqrt{a(x,\omega)} - \sqrt{a(0,\omega)}\right| \le \kappa|x|\ \text{for all}\ (x,\omega)\in\R\times\Omega;\nonumber\\
	&H(p,\,\cdot\,,\,\cdot\,)\ \text{is stationary, i.e.,}\ H(p,x,\omega) = H(p,0,\tau_x\omega)\ \text{for all}\ (p,x,\omega)\in\R\times\R\times\Omega;\tag{$\mathrm{H1}$}\label{H1}\\
	&\text{there exist even functions $G_L,G_U\in\Con(\R)$ that are nondecreasing on $[0,\infty)$, coercive, i.e.,}\tag{$\mathrm{H2}$}\label{H2}\\
	&\lim_{p\to\infty}G_L(p) = \lim_{p\to\infty}G_U(p) = \infty,\ \text{and}\ G_L(p) \le H(p,0,\omega) \le G_U(p)\ \text{for all}\ (p,\omega)\in\R\times\Omega;\nonumber\\
	&\text{for every $R>0$, there exists a $K_R>0$ such that}\tag{$\mathrm{H3}$}\label{H3}\\
	&|H(p,0,\omega)-H(q,0,\omega)|\le K_R |p-q|\ \text{for all $(p,q,\omega)\in[-R,R]\times[-R,R]\times\Omega$};\nonumber\\
	&\text{for every $R>0$, there exists a modulus of continuity $m_R:[0,\infty)\to[0,\infty)$ such that}\tag{$\mathrm{H4}$}\label{H4}\\
	&|H(p,x,\omega) - H(p,0,\omega)| \le m_R(|x|)\ \text{for all $(p,x,\omega)\in [-R,R]\times\R\times\Omega$.}\nonumber
\end{align}

\subsection{Statement of the main result}\label{sub:main}

When $\epsilon = 1$, we drop the superscript of $u^\epsilon$ in \eqref{eq:parabol} and write
\begin{equation}\label{eq:birhuzur}
	\partial_tu = a(x,\omega)\partial_{xx}^2u + H(\partial_xu,x,\omega),\quad (t,x)\in(0,\infty)\times\R.
\end{equation}

Our main result establishes the homogenization of \eqref{eq:parabol} with linear initial data. To this end, we make the following additional assumptions (see Remark \ref{rem:suff} for sufficient conditions):
\begin{align}
	&\begin{aligned}\label{eq:existunique}
		&\text{for every $\omega\in\Omega$ and $\theta\in\R$, \eqref{eq:birhuzur} has a unique viscosity solution}\\
		&\text{$u_\theta(\,\cdot\,,\,\cdot\,,\omega)\in\UC([0,\infty)\times\R)$ such that $u_\theta(0,x,\omega) = \theta x$ for all $x\in\R$;}
	\end{aligned}\\
	&\begin{aligned}\label{eq:duzlip}
		&\text{for every $\omega\in\Omega$ and $\theta\in\R$, there exists an $\ell_\theta = \ell_\theta(\omega) > 0$ such that}\\
		&|u_\theta(t,x,\omega) - u_\theta(t,y,\omega)| \le \ell_\theta|x-y|\ \ \text{for all}\ t\in[0,\infty)\ \text{and}\ x,y\in\R.
	\end{aligned}
\end{align}
Note that these assumptions carry over to \eqref{eq:parabol} with an arbitrary $\epsilon > 0$.
Indeed, the unique viscosity solution of the latter equation with the same initial condition is given by
\[ u_\theta^\epsilon(t,x,\omega) = \epsilon u_\theta\left(\frac{t}{\epsilon},\frac{x}{\epsilon},\omega\right). \]
See Section \ref{sub:viscosity} for some preliminaries regarding viscosity solutions of \eqref{eq:birhuzur}.

Here is the statement of our main result.

\begin{theorem}\label{thm:homlin}
	Assume that \eqref{A1}, \eqref{A2}, \eqref{H1}--\eqref{H4}, \eqref{eq:existunique} and \eqref{eq:duzlip} hold. Then, for $\mathbb{P}$-a.e.\ $\omega$, as $\epsilon\to0$, \eqref{eq:parabol} with linear initial data homogenizes to an inviscid HJ equation
	\begin{equation}\label{eq:efhuzur}
		\partial_t\ol u = \ol H(\partial_x\ol u),\quad(t,x)\in(0,\infty)\times\R,
	\end{equation}
	with some coercive $\ol H\in\Liploc(\R)$. More precisely, there exists an $\Omega_0\in\mathcal{F}$ with $\mathbb{P}(\Omega_0) = 1$ such that, for every $\omega\in\Omega_0$ and $\theta\in\R$, as $\epsilon\to0$, the unique viscosity solution $u_\theta^\epsilon(\,\cdot\,,\,\cdot\,,\omega)$ of \eqref{eq:parabol} with the initial condition $u_\theta^\epsilon(0,x,\omega) = \theta x$, $x\in\R$, converges locally uniformly on $[0,\infty)\times\R$ to $\ol u_\theta$ defined by
	\begin{equation}\label{eq:verya}
		\ol u_\theta(t,x) = t\ol H(\theta) + \theta x,
	\end{equation}
	which is the unique solution of \eqref{eq:efhuzur} with the same initial condition.
\end{theorem}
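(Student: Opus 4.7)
My plan is to construct the effective Hamiltonian from correctors and then transfer the information to the time-dependent equation \eqref{eq:parabol} by a comparison argument.

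\textbf{Step 1 (Cell problem as a first-order ODE).} For a slope $\theta\in\R$, I would look for a corrector $w_\theta(x,\omega)$ with stationary derivative solving $a(x,\omega)w_\theta''+H(\theta+w_\theta',x,\omega)=\ol H(\theta)$. In one dimension, setting $v=\theta+w_\theta'$ reduces this to the first-order ODE
\[
a(x,\omega)v'(x)=\ol H(\theta)-H(v(x),x,\omega),
\]
which should be the content of Section~\ref{sec:ode}. The task is then to identify the values of $\theta$ (and the associated constants $\ol H(\theta)$) for which this ODE admits a stationary solution $v(\,\cdot\,,\omega)$ with $\mathbb{E}[v(0,\omega)]=\theta$.

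\textbf{Step 2 (The set $E$ and construction of $\ol H$ on $E$).} I would let $E$ be the set of such ``good'' slopes and show, using the coercivity \eqref{H2} and the Lipschitz bound \eqref{eq:duzlip}, that $E$ is closed and unbounded above and below. Unboundedness should come from the fact that, for $|\theta|$ sufficiently large, coercivity forces $H(v,\cdot\,,\cdot)\to\infty$ and one can build stationary solutions by an ergodic-theoretic compactness argument on shifted trajectories of solutions of the time-dependent equation with slope $\theta$ (those given by \eqref{eq:duzlip}). On $E$ the value $\ol H(\theta)$ is uniquely determined by the compatibility condition $\mathbb E[a v'] = 0$, which gives $\ol H(\theta)=\mathbb E[H(v(0,\omega),0,\omega)]$; this formula, together with \eqref{H2}--\eqref{H3}, should yield coercivity and local Lipschitz continuity of $\ol H$ on $E$.

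\textbf{Step 3 (Bridging lemma and extension).} The delicate step is defining $\ol H$ on $E^c$. For each connected component $(\alpha,\beta)$ of $E^c$ I expect the bridging lemma of Section~\ref{sec:effective} to produce (possibly non-stationary) sub- and supercorrectors that ``bridge'' the gap and to force the relation $\ol H(\alpha)=\ol H(\beta)$, so the natural definition is to set $\ol H$ constant on $[\alpha,\beta]$. Continuity and coercivity on all of $\R$ then follow automatically.

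\textbf{Step 4 (Homogenization for linear data).} With $\ol H$ in hand, I would prove convergence of $u_\theta^\epsilon$ to $\ol u_\theta$ by comparison. For $\theta\in E$, the function $(t,x)\mapsto \theta x+t\ol H(\theta)+\epsilon(w_\theta(x/\epsilon,\omega)-w_\theta(0,\omega))$ is an exact solution of \eqref{eq:parabol}, and sublinearity of $w_\theta$ (an ergodic-theorem consequence of stationarity of $w_\theta'$ together with $\mathbb E[w_\theta']=0$) yields locally uniform convergence on a set of full $\mathbb P$-measure. For $\theta\in E^c$ I would sandwich $u_\theta^\epsilon$ between solutions with nearby slopes $\alpha,\beta\in E$ at the endpoints of the complementary component, using the comparison principle and the fact that $\ol H(\alpha)=\ol H(\beta)=\ol H(\theta)$ from Step~3, plus standard stability of viscosity solutions in the initial slope; a version of the general framework from \cite{DK17} should then package the slope-wise convergence into the full statement of Theorem~\ref{thm:homlin}. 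The main obstacle I anticipate is Step~3: both pinning down the value of $\ol H$ on a gap and producing useful sub/supersolutions there without a stationary corrector require the careful bridging construction, which is where the nondegeneracy $a>0$ is likely to play a decisive role.
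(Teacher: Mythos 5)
Your overall plan—reduce the cell problem to the first-order ODE for $v=\theta+w_\theta'$, define $E$ as the set of slopes admitting stationary solutions with the right mean, prove $E$ is closed and unbounded, bridge the gaps to extend $\ol H$, and conclude by comparison—is indeed the architecture of the paper's proof. However, two of the pivotal steps in your outline contain genuine errors.

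First, your claimed compatibility condition $\mathbb{E}[av']=0$ is false, and the resulting formula $\ol H(\theta)=\mathbb{E}[H(v(0,\omega),0,\omega)]$ is therefore wrong. Stationarity of $v$ together with boundedness gives $\mathbb{E}[v']=0$ (Birkhoff), not $\mathbb{E}[av']=0$; even in the periodic case a product of a stationary coefficient with a mean-zero process does not have mean zero. Dividing the ODE by $a$ and taking expectations yields $\ol H(\theta)=\mathbb{E}[H(v,0,\omega)/a]/\mathbb{E}[1/a]$, a weighted (not plain) average, and this may not even make sense since the hypotheses do not give $\mathbb{E}[1/a]<\infty$. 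Consequently, your proposed route to coercivity and local Lipschitz continuity via this formula does not work. The paper proves $G_L(\theta)\le\lambda(\theta)\le G_U(\theta)$ (Lemma \ref{lem:emi}) by an a.s.\ two-sided bound on $\|f_\theta\|_\infty$ plus the intermediate value theorem, and local Lipschitz continuity (Lemma \ref{lem:lam}) via a quantitative comparison estimate on pairs of stationary solutions—neither of which uses an averaged formula for $\lambda(\theta)$.

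Second, in Step 4 the sandwich you describe for $\theta\in E^c$—comparing $u^\epsilon_\theta$ with the solutions of slopes $\alpha,\beta\in E$—fails at the very first step. For $\alpha<\theta<\beta$, the difference $u_\theta(0,x)-u_\alpha(0,x)=(\theta-\alpha)x$ is unbounded above (and $u_\theta(0,x)-u_\beta(0,x)$ unbounded below), so the comparison principle gives nothing. This is exactly why the paper needs the bridging lemma: the bridged profiles $F_{1,2}^{\ol\lambda_j+\delta}$ and $F_{2,1}^{\ol\lambda_j-\delta}$ have derivative near $\theta_1=\alpha$ as $x\to-\infty$ and near $\theta_2=\beta$ as $x\to+\infty$ (or vice versa), so their initial data dominate (or are dominated by) $\theta x$ up to an additive constant for any $\theta\in(\alpha,\beta)$, and the resulting super/subsolutions $t(\ol\lambda_j\pm\delta)+F^{\ol\lambda_j\pm\delta}_{\cdot,\cdot}\pm C$ are what one compares against. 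A related subtlety your sketch does not record is that the bridges must be constructed at the perturbed levels $\ol\lambda_j\pm\delta$, $\delta>0$; at the level $\ol\lambda_j$ itself the ODE trajectory emanating from $f_1$ cannot be shown to reach $f_2$ in finite length (indeed, if it did stay trapped one would produce a stationary solution inside the gap, the very contradiction the paper exploits to establish \eqref{eq:doruk}). Finally, your Step 2 route to unboundedness of $E$ via compactness of shifted trajectories of the time-dependent problem is far heavier than needed; the paper gets this for free from constant sub/supersolutions and coercivity (Corollary \ref{cor:circ}).
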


Finally, replacing \eqref{eq:existunique} with the stronger assumption that
\begin{equation}\label{eq:well}
	\begin{aligned}
		&\text{the Cauchy problem for \eqref{eq:birhuzur} is well-posed in $\UC([0,\infty)\times\R)$ for every $\omega\in\Omega$,}
	\end{aligned}
\end{equation}
which is defined in Section \ref{sub:viscosity}, we generalize Theorem \ref{thm:homlin} to uniformly continuous initial data by citing a result from \cite{DK17} which is based on the perturbed test function method (see \cite{Evans89}).

\begin{corollary}\label{cor:velinim}
	Assume that \eqref{A1}, \eqref{A2}, \eqref{H1}--\eqref{H4}, \eqref{eq:duzlip} and \eqref{eq:well} hold. Then, for $\mathbb{P}$-a.e.\ $\omega$, as $\epsilon\to0$, \eqref{eq:parabol} with uniformly continuous initial data homogenizes to the inviscid HJ equation \eqref{eq:efhuzur}. More precisely, there exists an $\Omega_0\in\mathcal{F}$ with $\mathbb{P}(\Omega_0) = 1$ such that, for every $\omega\in\Omega_0$ and $g\in\UC(\R)$, as $\epsilon\to0$, the unique viscosity solution $u_g^\epsilon(\,\cdot\,,\,\cdot\,,\omega)$ of \eqref{eq:parabol} with the initial condition $u_g^\epsilon(0,\,\cdot\,,\omega) = g$ converges locally uniformly on $[0,\infty)\times\R$ to the unique viscosity solution $\ol u_g$ of \eqref{eq:efhuzur} with the same initial condition.
\end{corollary}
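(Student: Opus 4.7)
The plan is to reduce this corollary to Theorem \ref{thm:homlin} by invoking the abstract extension result of \cite{DK17}, which upgrades homogenization from linear to uniformly continuous initial data via Evans's perturbed test function method \cite{Evans89}. The inputs needed are: (i) an identified effective Hamiltonian $\ol H$ that is continuous and coercive; (ii) well-posedness of \eqref{eq:birhuzur} in $\UC([0,\infty)\times\R)$ for every $\omega\in\Omega$; and (iii) almost sure convergence of $u_\theta^\epsilon$ to $\ol u_\theta$ for every $\theta\in\R$ on a common full-measure event. Input (i) and input (iii) come from Theorem \ref{thm:homlin} (which produces $\ol H\in\Liploc(\R)$ and the event $\Omega_0$), while input (ii) is exactly \eqref{eq:well}. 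Since \eqref{eq:well} is strictly stronger than \eqref{eq:existunique}, Theorem \ref{thm:homlin} applies under the hypotheses of the corollary.

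The perturbed test function argument runs as follows. By coercivity and continuity of $\ol H$, standard first-order HJ theory (comparison via doubling of variables, with coercivity controlling behavior at infinity) gives existence and uniqueness of a $\UC$ viscosity solution $\ol u_g$ of \eqref{eq:efhuzur} for each $g\in\UC(\R)$. Using the comparison principle for \eqref{eq:parabol}, the bound \eqref{eq:duzlip}, and the Hamiltonian growth bounds in \eqref{H2}, one shows that the family $\{u_g^\epsilon(\,\cdot\,,\,\cdot\,,\omega)\}_\epsilon$ is precompact in $\Con_{\mathrm{loc}}([0,\infty)\times\R)$ for $\omega\in\Omega_0$. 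Given any subsequential limit $\ol u$, at a touching point $(t_0,x_0)$ of a smooth test function $\phi$ one sets $\theta = \partial_x\phi(t_0,x_0)$ and compares $u_g^\epsilon$ locally against $u_\theta^\epsilon$ via the comparison principle for \eqref{eq:parabol}; passage to the limit using input (iii) and Taylor expansion of $\phi$ converts $u_\theta^\epsilon$ into the linear effective profile $t\ol H(\theta)+\theta x$ and shows that $\ol u$ is both a viscosity sub- and supersolution of \eqref{eq:efhuzur} with initial datum $g$. Uniqueness then forces $\ol u = \ol u_g$ and convergence of the full family.

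There is no substantive new obstacle here: the heavy lifting has already been done in Theorem \ref{thm:homlin}, and the present statement is obtained by verifying that the hypotheses of the \cite{DK17} result match our setup. The only minor point to mind is that the exceptional null set must be chosen uniformly in $g$; this is automatic because input (iii) holds simultaneously for all $\theta\in\R$ on the single event $\Omega_0$, and the perturbed test function comparison uses only countably many slopes together with a standard density argument in $g$.
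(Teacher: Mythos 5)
Your proposal is correct and takes essentially the same route as the paper: the paper's proof of Corollary~\ref{cor:velinim} is precisely ``apply Theorem~\ref{thm:homlin} together with \cite[Theorem~3.1]{DK17}, with $\Omega_0$ inherited from Theorem~\ref{thm:homlin}.'' The extra detail you supply (the precompactness argument, the perturbed test function comparison at a touching point, and the observation that the full-measure event is already uniform in $\theta$ and hence in $g$) is simply a sketch of the content of \cite[Theorem~3.1]{DK17}, which the paper cites as a black box rather than reproducing.
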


\begin{remark}\label{rem:suff}
	On top of \eqref{A1}, \eqref{A2} and \eqref{H1}, if there exist constants $\alpha_0,\beta_0>0$ and $\gamma>1$ such that
	\begin{align}
		\alpha_0|p|^\gamma - 1/{\alpha_0} &\le H(p,0,\omega) \le \beta_0(|p|^\gamma + 1)\quad\text{for all}\ (p,\omega)\in\R\times\Omega,\tag{$\mathrm{H2'}$}\label{eq:H2'}\\
		|H(p,0,\omega) - H(q,0,\omega)| &\le \beta_0(|p| + |q| + 1)^{\gamma - 1}|p- q|\quad\text{for all}\ (p,q,\omega)\in\R\times\R\times\Omega,\quad\text{and}\tag{$\mathrm{H3'}$}\label{eq:H3'}\\
		|H(p,x,\omega) - H(p,0,\omega)| &\le \beta_0(|p|^\gamma + 1)|x|\quad\text{for all}\ (p,x,\omega)\in\R\times\R\times\Omega,\tag{$\mathrm{H4'}$}\label{eq:H4'}
	\end{align}
	then the additional assumptions \eqref{eq:existunique}, \eqref{eq:duzlip} and \eqref{eq:well} in Theorem \ref{thm:homlin} and Corollary \ref{cor:velinim} are valid (see \cite[Theorem 2.8]{DK17} and \cite[Theorem 3.2]{D19}). Note that \eqref{eq:H2'}, \eqref{eq:H3'} and \eqref{eq:H4'} are stronger versions of \eqref{H2}, \eqref{H3} and \eqref{H4}, respectively. These sufficient conditions (which we provide here for the sake of completeness) are not used anywhere in this paper.
\end{remark}

\subsection{Outline of the proof}\label{sub:ps}

In a nutshell, our approach consists of first constructing a suitable candidate for the effective Hamiltonian $\ol H(\theta)$, $\theta\in\R$, and then proving the convergence stated in Theorem \ref{thm:homlin}. Below we flesh out these two steps.

Given any $\theta\in\R$, assume that there exist
a $\lambda(\theta)\in\R$ and a stationary $f_\theta:\R\times\Omega\to\R$ such that $\mathbb{E}[f_\theta(0,\omega)] = \theta$ and 
\begin{equation}\label{eq:rey}
	a(x,\omega)f_\theta'(x,\omega) + H(f_\theta(x,\omega),x,\omega) = \lambda(\theta)
\end{equation}
for all $x\in\R$ and $\mathbb{P}$-a.e.\ $\omega$.
Let
\[ F_\theta(x,\omega) = \int_0^xf_\theta(y,\omega)dy. \]
Then, $\tilde u_\theta(t,x,\omega) = t\lambda(\theta) + F_\theta(x,\omega)$ is a solution of \eqref{eq:birhuzur}, and $F_\theta(x,\omega) = \theta x + o(x)$ as $|x|\to\infty$ by Birkhoff's ergodic theorem. (The function $x\mapsto F_\theta(x,\omega) - \theta x$ is known in the literature as a sublinear corrector, but we will not use this terminology in the rest of the paper as we will always work directly with $f_\theta$ or $F_\theta$.)

We use $\lambda(\theta)$ to construct our candidate for the effective Hamiltonian. To this aim, let $E$ be the set of all $\theta\in\R$ for which the assumption in the above paragraph is valid. Using our standing assumptions \eqref{A1}, \eqref{A2} and \eqref{H1}--\eqref{H4}, we show in Lemma \ref{lem:support} that $E$ is nonempty, $\inf E = -\infty$ and $\sup E = \infty$. Moreover, $E$ is a closed subset of $\R$  (see Lemma \ref{lem:closed}). If $E = \R$, then we simply set $\ol H(\theta) = \lambda(\theta)$ for all $\theta\in\R$. Otherwise, we can write $E^c$ as a disjoint union of at most countably many bounded open intervals:
\begin{equation*}
	E^c = \bigcup_{j\in J}(\theta_L^{(j)},\theta_R^{(j)}).
\end{equation*}
We prove in Lemma \ref{lem:esit} that $\lambda(\theta_L^{(j)}) = \lambda(\theta_R^{(j)}) =: \ol\lambda_j$ for all $j\in J$. This allows us to define $\ol H$ by setting
\[  \ol H(\theta) =
\begin{cases} \lambda(\theta)&\text{if}\ \theta\in E,\\
	\ol\lambda_j &\text{if}\ \theta\in(\theta_L^{(j)},\theta_R^{(j)})\ \text{for any}\ j\in J.
\end{cases}
\]
Lemma \ref{lem:lam} shows that $\ol{H}$ is locally Lipschitz continuous.

The proof of Lemma \ref{lem:esit} illustrates a key idea in the paper, so we provide a sketch of it here.
Suppose that $\lambda(\theta_L^{(j)}) \neq \lambda(\theta_R^{(j)})$ for some $j\in J$. For the sake of convenience, we write
\begin{equation}\label{eq:conv}
	\theta_1 = \theta_L^{(j)},\quad \theta_2=\theta_R^{(j)},\quad f_1=f_{\theta_1}\quad\text{and}\quad f_2=f_{\theta_2}.
\end{equation}
We show the existence of a stationary $f_0:\R\times\Omega\to\R$ such that
\[ a(x,\omega)f_0'(x,\omega) + H(f_0(x,\omega),x,\omega) = \frac12(\lambda(\theta_1) + \lambda(\theta_2)) \]
and $f_1(x,\omega) < f_0(x,\omega) < f_2(x,\omega)$ for all $x\in\R$ and $\mathbb{P}$-a.e.\ $\omega$.
Recalling equation \eqref{eq:rey} and the definition of $E$, we deduce that $\mathbb{E}[f_0(0,\omega)] \in E\cap(\theta_1,\theta_2) = \emptyset$, which is  a contradiction.

Our next task is to prove that, under the additional assumption \eqref{eq:existunique}, for every $\theta\in\R$,
\begin{equation}\label{eq:nebiye}
		\lim_{\epsilon\to0}u^\epsilon_\theta(1,0,\omega) = \lim_{t\to \infty}t^{-1}u_\theta(t,0,\omega)=\ol H(\theta)\ \ \text{for $\mathbb{P}$-a.e.\ $\omega$.}
\end{equation}
There are two cases:

\smallskip

(i) If $\theta\in E$, and, hence, $\ol H(\theta) = \lambda(\theta)$, then \eqref{eq:nebiye} follows from standard comparison arguments involving $u_\theta(t,x,\omega)$ and $\tilde u_\theta(t,x,\omega) = t\ol H(\theta) + F_\theta(x,\omega)$ (see Lemma \ref{lem:isbu}). 

\smallskip

(ii) If $\theta\not\in E$, then there is a $j\in J$ such that $\theta\in(\theta_L^{(j)},\theta_R^{(j)})$.
In Lemma \ref{lem:kibbe}, we show (using a proof by contradiction similar to the one for Lemma \ref{lem:esit} which is sketched above) that there exists an $\ol\Omega_j\in\mathcal{F}$ with $\mathbb{P}(\ol\Omega_j) = 1$ such that, for every $\omega\in\ol\Omega_j$ and $\delta\in(0,1)\cap\mathbb{Q}$, there exist constants
$\ul z_1 < \ol z_2$ and a function $f_{1,2}^{\ol\lambda_j + \delta}(\,\cdot\,,\omega)\in\Con^1([\ul z_1,\ol z_2])$ such that
\[ a(x,\omega)(f_{1,2}^{\ol\lambda_j + \delta})'(x,\omega) + H(f_{1,2}^{\ol\lambda_j + \delta}(x,\omega),x,\omega) = \ol\lambda_j + \delta\ \text{for all $x\in[\ul z_1,\ol z_2]$}, \]
$f_{1,2}^{\ol\lambda_j + \delta}(\ul z_1,\omega) = f_1(\ul z_1,\omega)$ and $f_{1,2}^{\ol\lambda_j + \delta}(\ol z_2,\omega) = f_2(\ol z_2,\omega)$ with the shorthand notation in \eqref{eq:conv}.
Then, in Lemma \ref{lem:LBUB}, we use $f_{1,2}^{\ol\lambda_j + \delta}(\,\cdot\,,\omega)$ to construct a ``bridge" between appropriately shifted versions of $F_{\theta_1}(\,\cdot\,,\omega)$ and $F_{\theta_2}(\,\cdot\,,\omega)$. More precisely, we define a function $F_{1,2}^{\ol\lambda_j + \delta}(\,\cdot\,,\omega)\in\Lip\cap\Con^1(\R)$ by setting
\[ F_{1,2}^{\ol\lambda_j + \delta}(x,\omega) = \begin{cases}
		\int_{\ul z_1}^xf_1(y,\omega)\,dy &\text{if}\ x \le \ul z_1,\\ 
		\int_{\ul z_1}^xf_{1,2}^{\ol\lambda_j + \delta}(y,\omega)dy &\text{if}\ \ul z_1 < x < \ol z_2,\\
		\int_{\ul z_1}^{\ol z_2}f_{1,2}^{\ol\lambda_j + \delta}(y,\omega)dy +\int_{\ol z_2}^xf_2(y,\omega)\,dy &\text{if}\ x \ge \ol z_2.
	\end{cases}
\]
It follows that \[ \tilde w_{j,\delta}(t,x,\omega) = t(\ol\lambda_j + \delta) + F_{1,2}^{\ol\lambda_j + \delta}(x,\omega) \] is a viscosity supersolution of \eqref{eq:birhuzur}.
Recalling that $\theta_1 < \theta < \theta_2$ and comparing $u_\theta$ and $\tilde w_{j,\delta}+C$ we deduce that \[\limsup_{t\to \infty}t^{-1}u_\theta(t,0,\omega) \le \limsup_{t\to \infty}t^{-1}(\tilde w_{j,\delta}(t,0,\omega)+C)=\ol\lambda_j + \delta.\] Since $\delta\in(0,1)\cap\mathbb{Q}$ is arbitrary, we conclude that $\displaystyle{\limsup_{t\to \infty}t^{-1}u_\theta(t,0,\omega) \le \ol\lambda_j} = \ol H(\theta)$. A symmetric argument provides us with a matching lower bound and completes the proof of \eqref{eq:nebiye}.

\smallskip

In fact, in Lemmas \ref{lem:isbu} and \ref{lem:LBUB}, we resort to a general argument based on Egorov's theorem and ergodicity to deduce that, for every $\theta\in\R$, there exists an $\Omega_0^\theta\in\mathcal{F}$ with $\mathbb{P}(\Omega_0^\theta) = 1$\ such that, for every $\omega\in\Omega_0^\theta$, as $\epsilon>0$, $u_\theta^\epsilon(\,\cdot\,,\,\cdot\,,\omega)$ converges locally uniformly on $[0,\infty)\times\R$ to $\ol u_\theta$ which is defined in \eqref{eq:verya}.
Finally, in Section \ref{sub:homproofs}, under the additional assumption \eqref{eq:duzlip}, we use comparison arguments involving $u_\theta(t,x,\omega)$ and $u_{\theta'}(t,x,\omega)$ with $\theta\in\R$, $\theta'\in\mathbb{Q}$ and $|\theta - \theta'|$ sufficiently small to show that this locally uniform convergence holds on the event $\displaystyle{\Omega_0 = \bigcap_{\theta'\in\mathbb{Q}}\Omega_0^{\theta'}}$ for every $\theta\in\R$,
and thereby complete the proof of Theorem \ref{thm:homlin}. (see Remark \ref{rem:orf}).

\section{Solutions of the auxiliary ODE}\label{sec:ode}

In this section, under the standing assumptions \eqref{A1}, \eqref{A2} and \eqref{H1}--\eqref{H4}, we consider the first-order ODE
\begin{equation}\label{eq:ram}
	a(x,\omega)f' + H(f,x,\omega) = \lambda,\quad x\in \R,
\end{equation}
where $\omega\in\Omega$ and $\lambda\in\R$, and obtain various results regarding its solutions that we will use in the subsequent sections.

\subsection{Solutions in a fixed environment}

\begin{definition}\label{def:ine}
	For every $\omega\in\Omega$, $\lambda\in\R$ and bounded $f_1,f_2:\R\to\R$ such that $f_1 < f_2$ on $\R$, let $\mathcal{S}^\lambda(f_1,f_2,\omega)$ be the set of $f\in\Con^1(\R)$ that solve \eqref{eq:ram} and satisfy $f_1 \le f \le f_2$ on $\R$.
\end{definition}

The following result (covered by \cite[Lemma A.8]{DKY23+}) provides a condition that guarantees that $\mathcal{S}^\lambda(f_1,f_2,\omega)$ is nonempty. 

\begin{lemma}\label{lem:bosdegil}
	For every $\omega\in\Omega$, $\lambda\in\R$ and bounded $f_1,f_2\in\Con^1(\R)$ such that $f_1 < f_2$ on $\R$, assume that either
	\begin{align*}
		&a(x,\omega)f_1' + H(f_1,x,\omega) < \lambda < a(x,\omega)f_2' + H(f_2,x,\omega),\quad x\in \R,\\
		\shortintertext{or}
		&a(x,\omega)f_1' + H(f_1,x,\omega) > \lambda > a(x,\omega)f_2' + H(f_2,x,\omega),\quad x\in \R.
	\end{align*}
	Then, $\mathcal{S}^\lambda(f_1,f_2,\omega)$ is nonempty. 
\end{lemma}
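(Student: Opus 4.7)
The plan is to cast (\ref{eq:ram}) as the explicit first-order ODE $f' = F(x,f)$ with $F(x,y) := (\lambda - H(y,x,\omega))/a(x,\omega)$, construct local solutions via Peano's theorem, and then use the strict sub- and supersolution inequalities satisfied by $f_1$ and $f_2$ as barriers that trap any such solution in the strip $\{f_1 \le f \le f_2\}$, thereby forcing it to extend to all of $\R$.

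Fix $\omega \in \Omega$. Assumption (\ref{A2}) makes $a(\,\cdot\,,\omega)$ continuous, and $a > 0$ keeps $1/a(\,\cdot\,,\omega)$ locally bounded. Assumption (\ref{H3}) gives local Lipschitz continuity of $H$ in $p$, while (\ref{H4}) combined with the stationarity in (\ref{H1}) (applied at each base point $x_0$, with $\omega$ replaced by $\tau_{x_0}\omega$) yields continuity of $H(p,\,\cdot\,,\omega)$ in $x$ uniformly on compact $p$-ranges. Hence $F$ is jointly continuous on $\R\times\R$. For any $x_0\in\R$ and any $y_0\in[f_1(x_0),f_2(x_0)]$, Peano's theorem yields a $\Con^1$ solution $f$ of the IVP $f' = F(x,f)$, $f(x_0)=y_0$, defined on some maximal interval of existence $(x_-,x_+)$. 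To establish the trapping $f_1 \le f \le f_2$ on $(x_-,x_+)$, under the first alternative of the hypothesis (the second being symmetric), I would argue by contradiction: if $f>f_2$ somewhere in $(x_0,x_+)$, set $\bar x := \inf\{x\in(x_0,x_+) : f(x)>f_2(x)\}$. Continuity gives $f(\bar x)=f_2(\bar x)$ and $f\ge f_2$ immediately to the right of $\bar x$, so $f'(\bar x)\ge f_2'(\bar x)$, whereas the strict supersolution property at $\bar x$ gives
\[ f_2'(\bar x) > \frac{\lambda - H(f_2(\bar x),\bar x,\omega)}{a(\bar x,\omega)} = F(\bar x,f(\bar x)) = f'(\bar x), \]
a contradiction. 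A symmetric argument using the strict subsolution inequality rules out $f<f_1$, and both arguments work equally on $(x_-,x_0)$.

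Once the trapping is in place, $f$ takes values in the bounded strip $[\inf f_1,\sup f_2]$ throughout $(x_-,x_+)$, so on each compact subinterval $[-N,N]\cap(x_-,x_+)$ the continuous function $F$ is bounded on the compact box $[-N,N]\times[\inf f_1,\sup f_2]$, and hence $|f'|$ is uniformly bounded there. The standard maximal-interval criterion then forces $(x_-,x_+)=\R$, so $f\in\mathcal{S}^\lambda(f_1,f_2,\omega)$. The main subtlety is the barrier argument at the first touch-point $\bar x$: the strictness of the hypothesized inequalities is precisely what provides the sign needed to close the contradiction, and without strictness the method would fail, which explains why the hypothesis is stated with strict inequalities on all of $\R$.
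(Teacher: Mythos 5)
The barrier argument you give traps the solution in the strip only in \emph{one} direction, and the claim that ``both arguments work equally on $(x_-,x_0)$'' is false. Under the first alternative of the hypothesis, $f_1' < F(x,f_1)$ and $f_2' > F(x,f_2)$ with $F(x,y)=(\lambda - H(y,x,\omega))/a(x,\omega)$. If $f$ solves $f'=F(x,f)$ with $f(x_0)\in[f_1(x_0),f_2(x_0)]$ and we move \emph{backward} from $x_0$, then at a first touch point $\bar x < x_0$ with $f(\bar x)=f_2(\bar x)$ and $f<f_2$ on $(\bar x, x_0]$, the sign information is $(f-f_2)'(\bar x)\le 0$, i.e., $f'(\bar x)\le f_2'(\bar x)$; and the supersolution inequality gives $f'(\bar x)=F(\bar x,f_2(\bar x))<f_2'(\bar x)$. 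These two conclusions are \emph{consistent}, so there is no contradiction and the solution may leave the strip on $(x_-,x_0)$. A concrete instance: with $a\equiv 1$, $H(p,x,\omega)=p$, $\lambda=0$, $f_1\equiv-1$, $f_2\equiv 1$, the ODE is $f'=-f$; starting at $f(0)=1/2$ gives $f(x)=\tfrac12 e^{-x}$, which exits $[-1,1]$ going backward. The second alternative has the symmetric defect: it traps backward but not forward. So neither alternative yields bilateral trapping by this direct comparison.

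The standard way to close the gap (and what a proof along these lines has to do) is a compactness argument: under the first alternative, for each $n\in\mathbb{N}$ take the forward solution $f_n$ of the IVP with initial point $x_0=-n$ and any $y_0\in[f_1(-n),f_2(-n)]$; the one-sided barrier argument you gave shows $f_n$ exists and satisfies $f_1\le f_n\le f_2$ on $[-n,\infty)$. The family $(f_n)$ is then equi-bounded, and since the ODE bounds $f_n'$ in terms of $f_n$ it is equi-Lipschitz on compacts; Arzel\`a--Ascoli plus a diagonal extraction gives a locally uniform subsequential limit $f$ which (by passing to the limit in the ODE, as in the paper's Lemma~\ref{lem:eksik}) is a $\Con^1$ solution trapped in $[f_1,f_2]$ on all of $\R$. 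Under the second alternative one sends the initial point to $+\infty$ instead. Your continuity, regularity, and maximal-interval-of-existence observations are fine, but the proof as written does not produce a solution defined on all of $\R$ inside the strip.
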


\begin{lemma}\label{lem:eksik}
	 For every $\omega\in\Omega$, $\lambda\in\R$ and bounded $f_1,f_2:\R\to\R$ such that $f_1 < f_2$ on $\R$, the set $\mathcal{S}^\lambda(f_1,f_2,\omega)$ is compact under the topology of locally uniform convergence. Moreover, if $\mathcal{S}^\lambda(f_1,f_2,\omega)$ is nonempty, then the function $\ol f:\R\to\R$, defined by setting
	\[ \ol f(x) = \ol f(x,\omega) = \sup\{f(x):\, f\in\mathcal{S}^\lambda(f_1,f_2,\omega)\}, \]
	is in $\mathcal{S}^\lambda(f_1,f_2,\omega)$.
\end{lemma}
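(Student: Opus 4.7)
The plan is to establish compactness via Arzel\`a--Ascoli, use local uniqueness for \eqref{eq:ram} to show that $\mathcal{S}^\lambda(f_1,f_2,\omega)$ is totally ordered by pointwise comparison, and then realize $\ol f$ as the locally uniform limit of a pointwise monotone subsequence in $\mathcal{S}^\lambda(f_1,f_2,\omega)$. For compactness, any $f\in\mathcal{S}^\lambda(f_1,f_2,\omega)$ is uniformly bounded by $M:=\sup|f_1|\vee\sup|f_2|$. By \eqref{A1}--\eqref{A2}, $x\mapsto\sqrt{a(x,\omega)}$ is $\kappa$-Lipschitz and strictly positive, so $a(\,\cdot\,,\omega)$ has a positive lower bound on each $[-K,K]$; combined with local boundedness of $H$ coming from \eqref{H1}, \eqref{H2} and \eqref{H4}, solving \eqref{eq:ram} for $f'$ gives a locally uniform $C^1$-bound. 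Arzel\`a--Ascoli then yields precompactness in the topology of locally uniform convergence. Closedness follows by passing to the limit in \eqref{eq:ram}: if $f_n\to f$ locally uniformly, then \eqref{H3} together with \eqref{H1} gives $H(f_n(\,\cdot\,),\,\cdot\,,\omega)\to H(f(\,\cdot\,),\,\cdot\,,\omega)$ locally uniformly, so $f_n'\to(\lambda-H(f,\,\cdot\,,\omega))/a(\,\cdot\,,\omega)$ locally uniformly, forcing $f\in\Con^1(\R)$ and $f\in\mathcal{S}^\lambda(f_1,f_2,\omega)$.

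The pivotal step for the second statement is total ordering of $\mathcal{S}^\lambda(f_1,f_2,\omega)$. Rewriting \eqref{eq:ram} as $f'=(\lambda-H(f,x,\omega))/a(x,\omega)$, the right-hand side is locally Lipschitz in $f$ uniformly in $x$ on each compact interval, by \eqref{H3} and the local positive lower bound on $a$. Picard--Lindel\"of gives local uniqueness of the initial value problem, and a standard clopen argument then shows that any two solutions in $\mathcal{S}^\lambda(f_1,f_2,\omega)$ agreeing at one point of $\R$ must coincide on all of $\R$. Hence any two distinct elements of $\mathcal{S}^\lambda(f_1,f_2,\omega)$ are strictly ordered on $\R$.

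Assuming $\mathcal{S}^\lambda(f_1,f_2,\omega)\ne\emptyset$, I would choose $f_n\in\mathcal{S}^\lambda(f_1,f_2,\omega)$ with $f_n(0)\to\ol f(0)$, thin to a subsequence with $f_n(0)$ nondecreasing, and invoke the total ordering to obtain $f_n\le f_{n+1}$ on all of $\R$. Compactness then yields a locally uniform limit $f^*\in\mathcal{S}^\lambda(f_1,f_2,\omega)$ with $f^*(0)=\ol f(0)$. If some $g\in\mathcal{S}^\lambda(f_1,f_2,\omega)$ satisfied $g(y)>f^*(y)$ at some $y$, then total ordering would force $g>f^*$ on $\R$, and in particular $g(0)>\ol f(0)$, contradicting the definition of $\ol f$; hence $f^*=\ol f\in\mathcal{S}^\lambda(f_1,f_2,\omega)$. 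The main conceptual obstacle is that pointwise suprema of ODE solutions are not solutions in general, so some structural feature is needed to identify the sup with an actual solution; the total ordering of $\mathcal{S}^\lambda(f_1,f_2,\omega)$---which hinges on $a>0$ and on \eqref{H3}---is what rescues the argument.
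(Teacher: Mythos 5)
Your proof is correct. For the compactness part you follow the same Arzel\`a--Ascoli route as the paper, merely spelling out why the equi-Lipschitz bound holds (positive local lower bound on $a$ from \eqref{A1}--\eqref{A2}, local boundedness of $H$ from \eqref{H1}--\eqref{H2}) and why the limiting $f$ is again a $\Con^1$ solution (via uniform convergence of $f_n'$). For the second assertion, the paper simply cites \cite[Lemma A.9]{DKY23+}, whereas you supply a self-contained argument: Picard--Lindel\"of uniqueness for the ODE $f' = (\lambda - H(f,x,\omega))/a(x,\omega)$ (which uses $a>0$ and \eqref{H3}) plus a clopen argument gives that $\mathcal{S}^\lambda(f_1,f_2,\omega)$ is totally ordered pointwise; you then maximize at a single point, produce a monotone sequence, pass to the locally uniform limit $f^*$ via compactness, and use total ordering once more to show $f^*$ dominates every element of the set, i.e.\ $f^* = \ol f$. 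The mechanism (total order, hence the pointwise sup is realized by a single solution) is very much the same structural fact the paper relies on, since the companion reference \cite[Lemma A.4]{DKY23+} that the paper uses elsewhere is exactly this trichotomy; so your proof is best viewed as a self-contained rendering of the external lemma rather than a genuinely different strategy. It buys you transparency about where $a>0$ and the Lipschitz bound \eqref{H3} are actually used, at the cost of a slightly longer write-up.
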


\begin{proof}
	If $\mathcal{S}^\lambda(f_1,f_2,\omega) = \emptyset$, then it is trivially compact. Otherwise, note that $\mathcal{S}^\lambda(f_1,f_2,\omega)$ is equi-bounded by definition. Moreover, equation \eqref{eq:ram} guarantees that, for every $M>0$, $\mathcal{S}^\lambda(f_1,f_2,\omega)$ is equi-Lipschitz on $[-M,M]$.
	Let $(f_n)_{n\ge1}$ be a sequence in $\mathcal{S}^\lambda(f_1,f_2,\omega)$. 
	By the Arzel\'a-Ascoli theorem (and a standard diagonalization argument), it has a subsequence (not relabeled) that converges locally uniformly to some $f\in\Con(\R)$.
	We see from \eqref{eq:ram} that $(f_n')_{n\ge1}$ converges locally uniformly to some $g\in\Con(\R)$. Therefore,
	\[ f(x) - f(0) = \lim_{n\to\infty} \left( f_n(x) - f_n(0) \right) = \lim_{n\to\infty} \int_0^x f_n'(y)dy = \int_0^x g(y)dy \]
	for all $x\in\R$. We conclude that $f\in\Con^1(\R)$, $f' = g$,
	and $f\in\mathcal{S}^\lambda(f_1,f_2,\omega)$. See \cite[Lemma A.9]{DKY23+} for the last assertion.
\end{proof}

For every $\lambda\ge G_L(0)$, let
\begin{equation}\label{eq:ar}
	\ol R^\lambda = \max\{ p\ge0:\,G_L(p) \le \lambda\}.
\end{equation}

\begin{lemma}\label{lem:val}
	For every $\omega\in\Omega$ and $\lambda\in\R$, if $f\in\Con^1(\R)$ is bounded and
	\[ a(x,\omega)f'(x) + H(f(x),x,\omega) \le \lambda\ \text{for all $x\in \R$}, \]
	then $|f(x)| \le \ol R^\lambda$ for all $x\in\R$.
\end{lemma}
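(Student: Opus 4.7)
The plan is to bound $\sup f$ from above and $\inf f$ from below by pushing the differential inequality into its extremizer, using a standard perturbation trick to force attainment. For the upper bound, I consider $g_\epsilon(x) = f(x) - \epsilon x^2$ for each small $\epsilon > 0$. Since $f$ is bounded and $\epsilon x^2\to\infty$ as $|x|\to\infty$, $g_\epsilon$ attains its maximum at some $x_\epsilon\in\R$. The inequality $g_\epsilon(x_\epsilon)\ge g_\epsilon(0)$ gives $\epsilon x_\epsilon^2 \le \sup f - f(0) < \infty$, so $\epsilon|x_\epsilon| = \sqrt{\epsilon}\cdot\sqrt{\epsilon x_\epsilon^2}\to 0$ as $\epsilon\to 0$; and taking any $y_0$ with $f(y_0)\ge\sup f-\delta$ in the inequality $g_\epsilon(x_\epsilon)\ge g_\epsilon(y_0)$ yields $\liminf_{\epsilon\to 0}f(x_\epsilon)\ge\sup f-\delta$. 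Since $\delta>0$ is arbitrary and $f(x_\epsilon)\le\sup f$, I conclude $f(x_\epsilon)\to\sup f$.

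The first-order condition at $x_\epsilon$ is $f'(x_\epsilon) = 2\epsilon x_\epsilon$. Substituting into the differential inequality and using \eqref{H1}, \eqref{H2} and $0 < a \le 1$, I get
\[ G_L(f(x_\epsilon)) \le H(f(x_\epsilon),x_\epsilon,\omega) \le \lambda - 2\epsilon\, x_\epsilon\, a(x_\epsilon,\omega) \le \lambda + 2\epsilon|x_\epsilon|. \]
Sending $\epsilon\to 0$ and using continuity of $G_L$, this gives $G_L(\sup f)\le\lambda$. Because $G_L$ is even and nondecreasing on $[0,\infty)$, $|\sup f|$ then belongs to $\{p\ge 0 : G_L(p)\le\lambda\}$; in particular the set is nonempty, $\ol R^\lambda$ is well defined, and $|\sup f|\le\ol R^\lambda$. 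The symmetric argument applied to $h_\epsilon(x) = f(x)+\epsilon x^2$, whose minimum is attained at some $y_\epsilon$ with $f'(y_\epsilon) = -2\epsilon y_\epsilon$, yields $G_L(\inf f)\le\lambda$ and hence $|\inf f|\le\ol R^\lambda$. Together, these give $|f(x)|\le\ol R^\lambda$ for every $x\in\R$.

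No step presents a serious difficulty; the proof amounts to the standard observation that a differential inequality forces its coercive zeroth-order part to be controlled at (approximate) extrema of solutions, and that the coercive perturbation $\pm\epsilon x^2$ is a harmless way to extract these extrema when $f$ does not attain them. The only point worth flagging is that the hypothesis implicitly requires $\lambda\ge G_L(0)$: otherwise the argument produces $G_L(\sup f)\le\lambda<G_L(0)\le G_L(\sup f)$, a contradiction, so no bounded $\Con^1$ function satisfies the hypothesis and the conclusion is vacuous.
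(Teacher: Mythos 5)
Your proof is correct, and it takes a genuinely different route from the paper's. You use the classical coercive-perturbation trick: adding $\mp\epsilon x^2$ forces the perturbed function to attain a global extremum at some $x_\epsilon$, the first-order condition $f'(x_\epsilon)=\pm 2\epsilon x_\epsilon$ is then fed into the differential inequality, and the bound $\epsilon|x_\epsilon|\to 0$ plus $f(x_\epsilon)\to\sup f$ (resp.\ $\inf f$) lets you pass to the limit and conclude $G_L(\sup f)\le\lambda$ and $G_L(\inf f)\le\lambda$. This is a maximum-principle-style argument in the spirit of viscosity-solution comparison proofs. The paper instead argues dynamically: it fixes $\epsilon>0$ and observes that at any point where $|f(x)|\ge\ol R^{\lambda+\epsilon}$ one has $a(x,\omega)f'(x)\le\lambda-G_L(f(x))\le-\epsilon$, hence $f'(x)\le-\epsilon$; so once $f$ leaves the interval $[-\ol R^{\lambda+\epsilon},\ol R^{\lambda+\epsilon}]$ it is trapped with strictly negative derivative and drifts to $\mp\infty$ in forward or backward time, contradicting boundedness, and then it lets $\epsilon\to0$. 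Both proofs exploit boundedness of $f$, but in different ways: yours to guarantee attainment of the perturbed extremum, the paper's to forbid the monotone escape. The paper's argument is arguably more elementary (no perturbation, no $\epsilon\to0$ limit interchanged with a sup) and tracks the sign structure of $f'$ directly, which is more in keeping with the ODE-based machinery used throughout Section 3; your version is shorter to state once one is comfortable with the perturbation technique and generalizes verbatim to viscosity sub/supersolutions where pointwise derivatives are unavailable. Your closing observation that the hypothesis is vacuous when $\lambda<G_L(0)$ (so that $\ol R^\lambda$ is well defined whenever the conclusion is non-vacuous) is correct and is also implicit, though unstated, in the paper's proof, which uses $\ol R^{\lambda+\epsilon}$ without comment.
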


\begin{proof}
	Fix any $\epsilon>0$. Observe that, if $|f(x)| \ge \ol R^{\lambda+\epsilon}$ were to hold for any $x\in\R$, then
	\[ a(x,\omega)f'(x) \le \lambda - G_L(f(x)) \le -\epsilon, \]
	and, hence, $f'(x) \le -\epsilon$. It follows that $|f(x)| \le \ol R^{\lambda + \epsilon}$ for all $x\in\R$.
	Indeed:
	\begin{itemize}
		\item [(i)] if there exists an $x_0\in\R$ such that $f(x_0) < - \ol R^{\lambda+\epsilon}$, then $f(x)\to - \infty$ as $x\to\infty$, which contradicts the boundedness of $f$;
		\item [(ii)] if there exists an $x_0\in\R$ such that $f(x_0) > \ol R^{\lambda+\epsilon}$, then $f(x)\to \infty$ as $x\to-\infty$, which contradicts the boundedness of $f$.
	\end{itemize}
	Since $\epsilon>0$ is arbitrary, we are done.
\end{proof}

\subsection{Stationary solutions} 

\begin{definition}\label{def:alarca}
	A measurable function $f:\R\times\Omega\to\R$ is said to be a stationary solution of \eqref{eq:ram} if it is stationary, i.e.,
	\[ f(x,\omega) = f(0,\tau_x\omega)\ \text{for all}\ \omega\in\Omega\ \text{and}\ x\in\R, \]
	$f(\,\cdot\,,\omega)\in\Con^1(\R)$ for all $\omega\in\Omega$,
	and $f(\,\cdot\,,\omega)$ is a bounded solution of \eqref{eq:ram} for $\mathbb{P}$-a.e.\ $\omega$.
	Stationary lower and upper solutions of \eqref{eq:ram} are similarly defined with the equality sign in \eqref{eq:ram} replaced with ``$<$" and ``$>$", respectively.
	The set of stationary solutions (resp.\ lower solutions, upper solutions) of \eqref{eq:ram} is denoted by $\mathcal{S}_{st}^\lambda$ (resp.\ $\mathcal{S}_{st,-}^\lambda$, $\mathcal{S}_{st,+}^\lambda$).
\end{definition}

\begin{remark}\label{rem:grua} 
	For every stationary $f:\R\times\Omega\to\R$ such that $f(\,\cdot\,,\omega)\in\Con(\R)$ for all $\omega\in\Omega$,
	\begin{equation}\label{eq:erg}
		\lim_{x\to\pm\infty}x^{-1}\int_0^xf(y,\omega)dy = \mathbb{E}[f(0,\omega)]
	\end{equation}
	for $\mathbb{P}$-a.e.\ $\omega$ by Birkhoff's ergodic theorem. Hence,
	for every $f_0\in\mathcal{S}_{st}^\lambda$, $f_1\in\mathcal{S}_{st,-}^\lambda$, $f_2\in\mathcal{S}_{st,+}^\lambda$, 
	\begin{align*}
		\Omega_{f_0}^\lambda &= \{ \omega\in\Omega:\,f_0(\,\cdot\,,\omega)\ \text{is a bounded solution of \eqref{eq:ram} that satisfies \eqref{eq:erg}} \},\\
		\Omega_{f_1,-}^\lambda &= \{ \omega\in\Omega:\,f_1(\,\cdot\,,\omega)\ \text{is a bounded lower solution of \eqref{eq:ram} that satisfies \eqref{eq:erg}} \},\\
		\Omega_{f_2,+}^\lambda &= \{ \omega\in\Omega:\,f_2(\,\cdot\,,\omega)\ \text{is a bounded upper solution of \eqref{eq:ram} that satisfies \eqref{eq:erg}} \}
	\end{align*}
	have probability $1$. These events are invariant under $(\tau_z)_{z\in\R}$ by \eqref{A1} and \eqref{H1}.
\end{remark}

\begin{theorem}\label{thm:zek}
	For every $\lambda\in\R$ and $f_1,f_2:\R\times\Omega\to\R$, assume that the event
	\begin{equation}\label{eq:ahanda}
		\tilde\Omega_{1,2} = \{\omega\in\Omega:\ f_1(x,\omega) < f_2(x,\omega)\ \text{for all}\ x\in\R\}
	\end{equation}
	has probability $1$, and either
	\begin{align}
		&\text{$f_1\in\mathcal{S}_{st,-}^\lambda$ and $f_2\in\mathcal{S}_{st,+}^\lambda$,}\label{eq:hu1}\\
		\shortintertext{or}
		&\text{$f_1\in\mathcal{S}_{st,+}^\lambda$ and $f_2\in\mathcal{S}_{st,-}^\lambda$.}\label{eq:hu2}
	\end{align}
	Let $\Omega_{1,2} = \tilde\Omega_{1,2} \cap\Omega_{f_1,-}^\lambda\cap\Omega_{f_2,+}^\lambda$ (resp.\ $\Omega_{1,2} = \tilde\Omega_{1,2} \cap\Omega_{f_1,+}^\lambda\cap\Omega_{f_2,-}^\lambda$) under \eqref{eq:hu1} (resp.\ \eqref{eq:hu2}).
	Then, there exists an $\ol f\in\mathcal{S}_{st}^\lambda$ such that, for every $\omega\in\Omega_{1,2}$,  $\ol f(\,\cdot\,,\omega)\in\mathcal{S}^\lambda(f_1(\,\cdot\,,\omega),f_2(\,\cdot\,,\omega),\omega)$, and it is given by
	\begin{equation}\label{eq:kau2}
		\ol f(x,\omega) = \sup\{f(x):\, f\in\mathcal{S}^\lambda(f_1(\,\cdot\,,\omega),f_2(\,\cdot\,,\omega),\omega)\}.
	\end{equation}
\end{theorem}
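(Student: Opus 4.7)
The plan is to take $\ol f$ defined by \eqref{eq:kau2} on $\Omega_{1,2}$ (and extended by $\ol f \equiv 0$ on $\Omega \setminus \Omega_{1,2}$) and verify in turn that (a) for each $\omega \in \Omega_{1,2}$, $\ol f(\,\cdot\,,\omega) \in \mathcal{S}^\lambda(f_1(\,\cdot\,,\omega), f_2(\,\cdot\,,\omega), \omega)$; (b) $\ol f$ is jointly measurable; (c) $\ol f$ is stationary. Together with $\mathbb{P}(\Omega_{1,2}) = 1$ (which holds by Remark \ref{rem:grua} combined with the assumption $\mathbb{P}(\tilde\Omega_{1,2}) = 1$) and the uniform bound $f_1 \le \ol f \le f_2$, these will imply $\ol f \in \mathcal{S}_{st}^\lambda$.

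For (a), fix $\omega \in \Omega_{1,2}$. By Definition \ref{def:alarca}, $f_1(\,\cdot\,,\omega)$ and $f_2(\,\cdot\,,\omega)$ are bounded elements of $\Con^1(\R)$ satisfying, under either \eqref{eq:hu1} or \eqref{eq:hu2}, the strict sub/supersolution inequalities required by Lemma \ref{lem:bosdegil} on all of $\R$ with the correct orientation; moreover $f_1 < f_2$ on $\R$ since $\omega \in \tilde\Omega_{1,2}$. Lemma \ref{lem:bosdegil} thus gives that $\mathcal{S}^\lambda(f_1(\,\cdot\,,\omega), f_2(\,\cdot\,,\omega), \omega) \neq \emptyset$, and Lemma \ref{lem:eksik} shows that the pointwise supremum $\ol f(\,\cdot\,,\omega)$ lies in this set.

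For (c), the key observation is that, for every $z \in \R$, the shift $f \mapsto f(\,\cdot\,+z)$ is a bijection from $\mathcal{S}^\lambda(f_1(\,\cdot\,,\omega), f_2(\,\cdot\,,\omega), \omega)$ onto $\mathcal{S}^\lambda(f_1(\,\cdot\,,\tau_z\omega), f_2(\,\cdot\,,\tau_z\omega), \tau_z\omega)$: by \eqref{A1} and \eqref{H1} the ODE \eqref{eq:ram} is preserved under the change of variable, and by the stationarity of $f_1, f_2$ the pinching bounds transfer. Taking suprema through this bijection and evaluating at $0$ on the shifted side yields $\ol f(x,\omega) = \ol f(0,\tau_x\omega)$ for every $\omega \in \Omega_{1,2}$ and $x \in \R$. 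The event $\Omega_{1,2}$ is $(\tau_z)_{z\in\R}$-invariant---$\tilde\Omega_{1,2}$ by stationarity of $f_1, f_2$, and the remaining two events by Remark \ref{rem:grua}---so on its complement both sides equal $0$ and stationarity holds on all of $\Omega \times \R$.

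The main obstacle is (b), the $\mathcal{B}(\R) \otimes \mathcal{F}$-measurability of $\ol f$. Continuity of $\ol f(\,\cdot\,,\omega)$ in $x$ reduces it to $\mathcal{F}$-measurability of $\omega \mapsto \ol f(x,\omega)$ for each fixed $x$. Since \eqref{H3} makes the right-hand side of $f'(y) = (\lambda - H(f(y), y, \omega))/a(y, \omega)$ locally Lipschitz in $f$, solutions of \eqref{eq:ram} are uniquely determined by any initial value $f(x) = c$, so $\ol f(x,\omega)$ is the largest $c \in [f_1(x,\omega), f_2(x,\omega)]$ for which the maximal solution through $(x,c)$ stays in $[f_1(\,\cdot\,,\omega), f_2(\,\cdot\,,\omega)]$ on all of $\R$. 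The measurability of this admissible set in $(c, \omega)$ then follows from the joint measurable dependence of the ODE flow on $\omega$, or equivalently by applying a standard measurable-selection theorem (Kuratowski--Ryll-Nardzewski) to the random compact set $\mathcal{S}^\lambda(f_1(\,\cdot\,,\omega), f_2(\,\cdot\,,\omega), \omega)$ in $\Con(\R)$; taking the supremum in $c$ preserves measurability.
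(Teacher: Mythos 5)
Your proposal is correct and follows essentially the same route as the paper's proof: Lemmas~\ref{lem:bosdegil} and \ref{lem:eksik} give $\ol f(\,\cdot\,,\omega)\in\mathcal{S}^\lambda(f_1(\,\cdot\,,\omega),f_2(\,\cdot\,,\omega),\omega)$ pointwise in $\omega\in\Omega_{1,2}$, the shift-equivariance of the set $\mathcal{S}^\lambda$ under \eqref{A1} and \eqref{H1} yields stationarity of the supremum, and the extension by zero off the full-measure invariant event $\Omega_{1,2}$ finishes the argument. The only place you diverge is the measurability step (b): the paper disposes of it by citing \cite[Lemma~4.4]{DKY23+}, whereas you sketch a direct argument. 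Your sketch is sound in outline (the Carath\'eodory reduction to measurability in $\omega$ at fixed $x$, and reduction to the ``admissible initial values'' via ODE uniqueness from \eqref{H3}), but the last sentence is looser than it should be: ``joint measurable dependence of the ODE flow on $\omega$'' is not a given and would itself have to be derived from the measurability of $a$ and $H$, and Kuratowski--Ryll-Nardzewski is a selection theorem rather than a tool that directly delivers measurability of the supremum; what one actually needs is that $\omega\mapsto\mathcal{S}^\lambda(f_1(\,\cdot\,,\omega),f_2(\,\cdot\,,\omega),\omega)$ is a measurable compact-valued multifunction in the separable space $\Con(\R)$, from which measurability of $\sup\{f(x):f\in\mathcal{S}^\lambda\}$ follows. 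So your approach matches the paper's, but you should tighten or simply cite the measurability step rather than present the current heuristic as self-contained.
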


\begin{proof}
	For every $\omega\in\Omega_{1,2}$, the set $\mathcal{S}^\lambda(f_1(\,\cdot\,,\omega),f_2(\,\cdot\,,\omega),\omega)$ is nonempty by Lemma \ref{lem:bosdegil}. Hence, the function $\ol f(\,\cdot\,,\omega)$, defined by \eqref{eq:kau2}, is in $\mathcal{S}^\lambda(f_1(\,\cdot\,,\omega),f_2(\,\cdot\,,\omega),\omega)$ by Lemma \ref{lem:eksik}.
	
	Note that the function $\ol f:\R\times\Omega_{1,2}\to\R$ is measurable. This is justified exactly as in the proof of \cite[Lemma 4.4]{DKY23+}.
	
	For every $\omega\in\Omega_{1,2}$ and $y\in\R$,
	\begin{equation}\label{eq:adria}
		f\in\mathcal{S}^\lambda(f_1(\,\cdot\,,\omega),f_2(\,\cdot\,,\omega),\omega)\quad\iff\quad f(\,\cdot + y)\in\mathcal{S}^\lambda(f_1(\,\cdot\,,\tau_y\omega),f_2(\,\cdot\,,\tau_y\omega),\tau_y\omega)
	\end{equation}
	by Definition \ref{def:alarca} and assumptions (A1) and (H1). Therefore,
	\[ \ol f(y,\omega) = \sup\{f(y):\, f(\,\cdot + y)\in\mathcal{S}^\lambda(f_1(\,\cdot\,,\tau_y\omega),f_2(\,\cdot\,,\tau_y\omega),\tau_y\omega)\} = \ol f(0,\tau_y\omega). \]
	
	Finally, we extend $\ol f$ from $\R\times\Omega_{1,2}$ to $\R\times\Omega$ by setting $\ol f(x,\omega) = 0$ for all $\omega\in\Omega\setminus\Omega_{1,2}$ and $x\in\R$. Since	$\Omega_{1,2}$ has probability $1$ and it is invariant under $(\tau_z)_{z\in\R}$ (see Remark \ref{rem:grua}), this extension (still denoted by $\ol f$) is stationary, and hence, it is in $\mathcal{S}_{st}^\lambda$.
\end{proof}

\begin{corollary}\label{cor:circ}
	For every $\lambda,p_1,p_2\in\R$ such that $p_1 < p_2$, if
	\[ G_U(p_1) < \lambda < G_L(p_2)\quad\text{or}\quad G_U(p_2) < \lambda < G_L(p_1), \]
	then there exists an $f\in\mathcal{S}_{st}^\lambda$ such that $p_1 \le f(x,\omega) \le p_2$ for all $\omega\in\Omega$ and $x\in\R$.
\end{corollary}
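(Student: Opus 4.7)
The plan is to deduce this directly from Theorem~\ref{thm:zek} by taking the stationary lower/upper solutions to be the constant functions $p_1$ and $p_2$ themselves.

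Define $f_1, f_2 : \R \times \Omega \to \R$ by $f_1(x,\omega) = p_1$ and $f_2(x,\omega) = p_2$. These are trivially stationary, belong to $\Con^1(\R)$ in the spatial variable, are bounded, and satisfy $f_1 < f_2$ everywhere, so the event $\tilde\Omega_{1,2}$ in \eqref{eq:ahanda} equals $\Omega$ and has probability $1$.

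I would then verify the strict sub/super-solution inequalities case by case, using that $f_j' \equiv 0$ and that by \eqref{H1} and \eqref{H2} the bound $G_L(p) \le H(p, x, \omega) \le G_U(p)$ holds for every $(p, x, \omega)$ (since $H(p, x, \omega) = H(p, 0, \tau_x \omega)$). In the case $G_U(p_1) < \lambda < G_L(p_2)$, we get
\[ a(x,\omega) f_1' + H(f_1, x, \omega) \le G_U(p_1) < \lambda < G_L(p_2) \le a(x,\omega) f_2' + H(f_2, x, \omega), \]
so $f_1 \in \mathcal{S}_{st,-}^\lambda$ and $f_2 \in \mathcal{S}_{st,+}^\lambda$, placing us in hypothesis \eqref{eq:hu1}. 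In the case $G_U(p_2) < \lambda < G_L(p_1)$, the inequalities are reversed:
\[ a(x,\omega) f_2' + H(f_2, x, \omega) \le G_U(p_2) < \lambda < G_L(p_1) \le a(x,\omega) f_1' + H(f_1, x, \omega), \]
so $f_1 \in \mathcal{S}_{st,+}^\lambda$ and $f_2 \in \mathcal{S}_{st,-}^\lambda$, fitting hypothesis \eqref{eq:hu2}.

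In either case, Theorem~\ref{thm:zek} produces an $\ol f \in \mathcal{S}_{st}^\lambda$ with $\ol f(\,\cdot\,,\omega) \in \mathcal{S}^\lambda(f_1(\,\cdot\,,\omega), f_2(\,\cdot\,,\omega), \omega)$ on the full-measure invariant event $\Omega_{1,2}$, and on its complement we set $\ol f \equiv 0$ (as in the proof of Theorem~\ref{thm:zek}). To get the $\omega$-uniform pointwise bound $p_1 \le \ol f(x,\omega) \le p_2$ stated in the corollary (rather than only a.s.), I would simply redefine $\ol f$ off of $\Omega_{1,2}$ to be the constant $p_1$ (or any value in $[p_1, p_2]$); stationarity is preserved because $\Omega_{1,2}$ is $(\tau_z)$-invariant. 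There is no real obstacle here — the only point to keep in mind is that Definition~\ref{def:alarca} requires strict inequalities for lower/upper solutions, which is exactly why the strict hypotheses $G_U(p_1) < \lambda$ etc.\ are imposed in the statement.
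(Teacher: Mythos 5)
Your proof is correct and takes essentially the same approach as the paper: set $f_1\equiv p_1$, $f_2\equiv p_2$, verify the strict lower/upper-solution inequalities in each of the two cases, and invoke Theorem~\ref{thm:zek}. The final redefinition of $\ol f$ off $\Omega_{1,2}$ is harmless but unnecessary, since here $\Omega_{f_1,-}^\lambda=\Omega_{f_2,+}^\lambda=\Omega$ (resp.\ with $\pm$ swapped) by the pointwise bounds $G_L(p)\le H(p,x,\omega)\le G_U(p)$, so $\Omega_{1,2}=\Omega$ and the bound $p_1\le\ol f\le p_2$ already holds for every $\omega$.
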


\begin{proof}
	Let $f_1(x,\omega) = p_1$ and $f_2(x,\omega) = p_2$ for all $\omega\in\Omega$ and $x\in\R$. We have two cases:
	\begin{itemize}
		\item [(i)] if $G_U(p_1) < \lambda < G_L(p_2)$, then
		\begin{align*}
			&a(x,\omega)f_1' + H(f_1,x,\omega) = H(p_1,x,\omega) \le G_U(p_1) < \lambda\quad\text{and}\\
			&a(x,\omega)f_2' + H(f_2,x,\omega) = H(p_2,x,\omega) \ge G_L(p_2) > \lambda,
		\end{align*}
		i.e., \eqref{eq:hu1} holds with $\Omega_{f_1,-}^\lambda = \Omega_{f_2,+}^\lambda = \Omega$;
		\item [(ii)] if $G_U(p_2) < \lambda < G_L(p_1)$, then
		\begin{align*}
			&a(x,\omega)f_1' + H(f_1,x,\omega) = H(p_1,x,\omega) \ge G_L(p_1) > \lambda\quad\text{and}\\
			&a(x,\omega)f_2' + H(f_2,x,\omega) = H(p_2,x,\omega) \le G_U(p_2) < \lambda,
		\end{align*}
		i.e., \eqref{eq:hu2} holds with $\Omega_{f_1,+}^\lambda = \Omega_{f_2,-}^\lambda = \Omega$.
	\end{itemize}
	Hence, the assertion follows from Theorem \ref{thm:zek}.
\end{proof}

\section{The effective Hamiltonian and the bridging lemma}\label{sec:effective}

In this section, under the standing assumptions \eqref{A1}, \eqref{A2} and \eqref{H1}--\eqref{H4}, we construct our candidate for the effective Hamiltonian, obtain some properties of it, and provide a bridging lemma which is a crucial part of our proof of homogenization (which we will complete in Section \ref{sec:homogenization}). 

\subsection{The effective Hamiltonian} 

Recall Definition \ref{def:alarca}. Let
\[ \mathcal{S}_{st} = \bigcup_{\lambda\in\R}\mathcal{S}_{st}^\lambda\quad\text{and}\quad E = \{ \mathbb{E}[f(0,\omega)]:\, f\in\mathcal{S}_{st}\}. \]

\begin{lemma}\label{lem:support}
	$E$ is nonempty, $\inf E = -\infty$ and $\sup E = \infty$.
\end{lemma}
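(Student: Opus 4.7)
The plan is to apply Corollary \ref{cor:circ} to constant pairs $f_1\equiv p_1 < p_2\equiv f_2$ chosen so that either $G_U(p_1) < \lambda < G_L(p_2)$ or $G_U(p_2) < \lambda < G_L(p_1)$ holds for some $\lambda\in\R$. Each such choice produces an $f\in\mathcal{S}_{st}^\lambda\subset\mathcal{S}_{st}$ with $p_1 \le f(x,\omega)\le p_2$ for all $(x,\omega)$, whence $\mathbb{E}[f(0,\omega)]\in[p_1,p_2]$ lies in $E$. So the nonemptiness and both unboundedness assertions reduce to showing that the interval $[p_1,p_2]$ can be pushed arbitrarily far to the right and arbitrarily far to the left while keeping the hypotheses of Corollary \ref{cor:circ} satisfied. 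Nonemptiness drops out of either construction for free.

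For $\sup E = \infty$, I would fix $N>0$, set $p_1:=N$, and use the coercivity of $G_L$ from \eqref{H2} to pick $p_2 > N$ with $G_L(p_2) > G_U(N)+1$. Then any $\lambda$ in the nonempty interval $(G_U(p_1),G_L(p_2))$ falls under the first alternative of Corollary \ref{cor:circ}, producing $f\in\mathcal{S}_{st}^\lambda$ with $N\le f\le p_2$ and hence $\mathbb{E}[f(0,\omega)]\ge N$. For $\inf E=-\infty$, I would argue symmetrically, exploiting the evenness of $G_L$ in \eqref{H2} (which, combined with coercivity, yields $G_L(p)\to\infty$ as $p\to-\infty$ too): fix $N>0$, set $p_2:=-N$, pick $p_1 < -N$ with $G_L(p_1)=G_L(|p_1|) > G_U(-N)+1 = G_U(p_2)+1$, and take $\lambda\in(G_U(p_2),G_L(p_1))$. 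The second alternative of Corollary \ref{cor:circ} then delivers $f\in\mathcal{S}_{st}^\lambda$ with $p_1\le f\le -N$, hence $\mathbb{E}[f(0,\omega)]\le -N$.

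I do not anticipate any real obstacle here. The proof is essentially a direct consequence of Corollary \ref{cor:circ}, and the only moving parts are that $G_L$ is coercive at $\pm\infty$ (supplied jointly by evenness and coercivity in \eqref{H2}) and that $G_U$ is a finite pointwise upper bound on $H(\cdot,0,\omega)$ (also from \eqref{H2}). If instead only $G_L$ were coercive at $+\infty$, we would still get $\sup E = \infty$ but might miss $\inf E=-\infty$; evenness is what makes the two sides symmetric and closes the argument.
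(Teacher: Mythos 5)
Your proof is correct and takes essentially the same approach as the paper: both invoke Corollary \ref{cor:circ} with constant bracketing functions, using coercivity and evenness of $G_L$ from \eqref{H2} to push the interval $[p_1,p_2]$ arbitrarily far in either direction. The paper phrases it slightly more compactly by fixing a single $p\in\R$ and obtaining, for one choice of $\lambda$, both an $f_1$ with $\mathbb{E}[f_1(0,\omega)]\le p$ and an $f_2$ with $\mathbb{E}[f_2(0,\omega)]\ge p$, but this is a cosmetic difference from your two-sided split.
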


\begin{proof}
	By the coercivity of $G_L$, for every $p\in\R$, there exist $p_1,p_2\in\R$ such that $p_1 < p < p_2$ and $G_U(p) < \min\{ G_L(p_1),G_L(p_2) \}$. Choose a $\lambda\in\R$ such that $G_U(p) < \lambda < \min\{ G_L(p_1),G_L(p_2) \}$. By Corollary \ref{cor:circ}, there exist $f_1,f_2\in\mathcal{S}_{st}^\lambda$ such that $p_1 \le f_1(x,\omega) \le p \le f_2(x,\omega) \le p_2$ for all $\omega\in\Omega$ and $x\in\R$. Note that $\mathbb{E}[f_1(0,\omega)] \le p \le \mathbb{E}[f_2(0,\omega)]$. Therefore, $\inf E \le p \le \sup E$. Since $p\in\R$ is arbitrary, we are done.
\end{proof}

\begin{lemma}\label{lem:icab}
	 For every $\theta\in E$, there is a unique $\lambda(\theta)\in\R$ and a unique $f_\theta\in\mathcal{S}_{st}^{\lambda(\theta)}$ such that $\mathbb{E}[f_\theta(0,\omega)] = \theta$ and 
	\begin{equation}\label{eq:hemcins}
		a(x,\omega)f_\theta'(x,\omega) + H(f_\theta(x,\omega),x,\omega) = \lambda(\theta),\quad x\in\R,
	\end{equation}
	for all $\omega\in\Omega_{f_\theta}^{\lambda(\theta)}$.
\end{lemma}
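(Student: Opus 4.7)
Existence of the pair $(\lambda(\theta), f_\theta)$ is tautological: $\theta \in E$ means there is some $f \in \mathcal{S}_{st} = \bigcup_{\lambda \in \R} \mathcal{S}_{st}^\lambda$ with $\mathbb{E}[f(0,\omega)] = \theta$, and any such $f$ lies in some $\mathcal{S}_{st}^\lambda$; set $\lambda(\theta) := \lambda$ and $f_\theta := f$. The substance of the lemma is therefore \emph{uniqueness} of $\lambda(\theta)$ and then of $f_\theta$ within $\mathcal{S}_{st}^{\lambda(\theta)}$. I argue uniqueness of $\lambda(\theta)$ by contradiction: suppose $f_i \in \mathcal{S}_{st}^{\lambda_i}$ both have mean $\theta$ and $\lambda_1 < \lambda_2$. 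Working on the full-measure event $\Omega_{f_1}^{\lambda_1} \cap \Omega_{f_2}^{\lambda_2}$ and setting $g := f_2 - f_1$, I subtract the two versions of \eqref{eq:hemcins} at any point $x$ with $g(x,\omega) = 0$ to obtain $a(x,\omega) g'(x,\omega) = \lambda_2 - \lambda_1 > 0$. Hence $g(\,\cdot\,,\omega)$ strictly crosses each of its zeros from below, which (by the intermediate value theorem) rules out two consecutive zeros.

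This yields, for every such $\omega$, the trichotomy: (a) $g > 0$ on $\R$; (b) $g < 0$ on $\R$; or (c) $g$ has exactly one zero $x_0(\omega) \in \R$ and changes sign from negative to positive there. Each of these events is invariant under $(\tau_z)_{z\in\R}$ by \eqref{A1}, \eqref{H1}, and the stationarity of $f_1,f_2$, so ergodicity forces exactly one of them to have probability $1$. Cases (a) and (b) immediately contradict $\mathbb{E}[g(0,\omega)] = 0$. In the decisive case (c), the set $\{x \ge 0 : g(x,\omega) > 0\} \supseteq (\max\{0, x_0(\omega)\}, \infty)$ has asymptotic density one in $[0,\infty)$, so Birkhoff's ergodic theorem applied to the stationary indicator $\mathbf{1}_{\{g(0,\,\cdot\,) > 0\}}$ (whose value at $\tau_x\omega$ equals $\mathbf{1}_{\{g(x,\omega) > 0\}}$) gives $\mathbb{P}(g(0,\omega) > 0) = 1$, hence $\mathbb{E}[g(0,\omega)] > 0$ and again a contradiction. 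This forces $\lambda_1 = \lambda_2$.

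With $\lambda = \lambda(\theta)$ pinned down, uniqueness of $f_\theta$ in $\mathcal{S}_{st}^\lambda$ reduces to a simpler variant of the same scheme. The ODE $f' = (\lambda - H(f,x,\omega))/a(x,\omega)$ has right-hand side locally Lipschitz in $f$ by \eqref{A2} and \eqref{H3}, and Lemma \ref{lem:val} bounds all candidates uniformly by $\ol R^\lambda$, making this Lipschitz constant uniform in $f$; hence any two $f_1, f_2 \in \mathcal{S}_{st}^\lambda$ that touch at some $(x_0,\omega)$ must coincide on $\R$ for that $\omega$. If no touching occurs on a positive-measure set of $\omega$, the invariant trichotomy collapses to $f_1 < f_2$ everywhere or $f_1 > f_2$ everywhere almost surely, each contradicting $\mathbb{E}[f_1(0,\omega)] = \mathbb{E}[f_2(0,\omega)] = \theta$. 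The main obstacle, and the conceptual heart of the argument, is case (c) above: a pointwise ODE comparison by itself only locates a single sign change of $g$, and it is the ergodic input that upgrades this into the strict probabilistic inequality needed to contradict equality of means.
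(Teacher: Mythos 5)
Your proposal is correct, and the strategy — reduce uniqueness to a trichotomy (strict ordering or identity) among stationary solutions, then rule out the strict inequalities via equality of means — is exactly the approach the paper takes. The difference is one of packaging: the paper delegates the entire trichotomy, for arbitrary pairs $\lambda_1,\lambda_2$, to a single citation (\cite[Lemma~A.4]{DKY23+}) and then reads off uniqueness in one line, whereas you reconstruct that cited lemma from scratch. Your reconstruction is sound: the single-crossing observation $a\,g' = \lambda_2 - \lambda_1 > 0$ at any zero of $g = f_2 - f_1$, the resulting translation-invariant trichotomy, ergodicity forcing exactly one case to have full measure, and — crucially — the Birkhoff density argument that upgrades ``at most one sign change'' to $\mathbb{P}(g(0,\omega)>0)=1$ and thereby kills the crossing scenario, together with the Picard–Lindelöf touching argument when $\lambda_1=\lambda_2$. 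This last part implicitly uses (A2) to get continuity of $a$ in $x$ (so $1/a$ is locally bounded) and (H3) for the Lipschitz modulus of $H$; your appeal to Lemma~\ref{lem:val} for a uniform a~priori bound is the right normalization. In short, you have correctly identified and filled in what is hidden inside the paper's citation; the content matches, and nothing is missing.
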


\begin{proof}
	For every $\lambda_1,\lambda_2\in\R$ (not necessarily distinct), if $f_1\in\mathcal{S}_{st}^{\lambda_1}$ and $f_2\in\mathcal{S}_{st}^{\lambda_2}$, then exactly one of the following mutually exclusive events has probability 1 (see \cite[Lemma A.4]{DKY23+}):
	\begin{align*}
		\{\omega\in\Omega_{f_1}^{\lambda_1}\cap\Omega_{f_2}^{\lambda_2}:\,(f_1 - f_2)(x,\omega) = 0\ \text{for all}\ x\in\R\};\\
		\{\omega\in\Omega_{f_1}^{\lambda_1}\cap\Omega_{f_2}^{\lambda_2}:\,(f_1 - f_2)(x,\omega) < 0\ \text{for all}\ x\in\R\};\\
    	\{\omega\in\Omega_{f_1}^{\lambda_1}\cap\Omega_{f_2}^{\lambda_2}:\,(f_1 - f_2)(x,\omega) > 0\ \text{for all}\ x\in\R\}.
	\end{align*}
	Hence, for every $\theta\in E$, there is a unique $f_\theta\in\mathcal{S}_{st}$ such that $\mathbb{E}[f_\theta(0,\omega)] = \theta$, and the assertion follows.
\end{proof}

\begin{lemma}\label{lem:emi}
	For every $\theta\in E$,
	\[ \mathbb{P}(G_L( f_\theta(x,\omega)) \le \lambda(\theta) \le G_U( f_\theta(x,\omega))\ \text{for all $x\in\R$}) = 1. \]
	Consequently, $G_L(\theta) \le \lambda(\theta) \le G_U( \theta)$.
\end{lemma}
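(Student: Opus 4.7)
The plan is to prove the ``for all $x\in \R$" inequalities separately and then derive the consequent scalar bounds by an ergodicity argument that pins $f_\theta$ to a definite sign.

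For the lower bound $G_L(f_\theta(x,\omega))\le \lambda(\theta)$, I would simply invoke Lemma~\ref{lem:val} applied to the bounded $C^1$ solution $f_\theta(\,\cdot\,,\omega)$ of \eqref{eq:hemcins} (which is available for $\omega\in\Omega_{f_\theta}^{\lambda(\theta)}$ via Remark~\ref{rem:grua}). Lemma~\ref{lem:val} yields $|f_\theta(x,\omega)|\le \ol R^{\lambda(\theta)}$ for all $x$, and the evenness and monotonicity of $G_L$ together with the definition \eqref{eq:ar} of $\ol R^\lambda$ then give $G_L(f_\theta(x,\omega))\le G_L(\ol R^{\lambda(\theta)})\le \lambda(\theta)$.

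The bound $\lambda(\theta)\le G_U(f_\theta(x,\omega))$ is the more delicate half. It is immediate if $\lambda(\theta)\le G_U(0)$, so I would assume otherwise and set $\ul R^\mu=\inf\{p\ge 0:\,G_U(p)\ge\mu\}>0$ for $\mu>G_U(0)$; continuity of $G_U$ gives $G_U(\ul R^\mu)=\mu$, and evenness and monotonicity give $\{p\in\R:\,G_U(p)<\mu\}=(-\ul R^\mu,\ul R^\mu)$. For each $\epsilon\in(0,\lambda(\theta)-G_U(0))$, the ODE \eqref{eq:hemcins} together with \eqref{H2} and $a\le 1$ forces, on the closed strip $\{|f_\theta(x,\omega)|\le \ul R^{\lambda(\theta)-\epsilon}\}$,
\[
f_\theta'(x,\omega)=\frac{\lambda(\theta)-H(f_\theta(x,\omega),x,\omega)}{a(x,\omega)}\ge \lambda(\theta)-G_U(f_\theta(x,\omega))\ge \epsilon.
\]
A one-sided-derivative argument at the boundary $\pm\ul R^{\lambda(\theta)-\epsilon}$ (where any re-entry from above or exit below would require $f_\theta'\le 0$, contradicting the displayed $\epsilon$-bound) confines $f_\theta(\,\cdot\,,\omega)$ to at most a single upward crossing of the strip, so $|S_\epsilon(\omega)|:=|\{x\in\R:\,|f_\theta(x,\omega)|<\ul R^{\lambda(\theta)-\epsilon}\}|\le 2\ul R^{\lambda(\theta)-\epsilon}/\epsilon$ for every $\omega\in\Omega_{f_\theta}^{\lambda(\theta)}$. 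Birkhoff's ergodic theorem applied to $\mathbf{1}_{\{G_U(f_\theta(0,\tau_x\omega))<\lambda(\theta)-\epsilon\}}$ then forces $\mathbb{P}(G_U(f_\theta(0,\omega))<\lambda(\theta)-\epsilon)=0$, and a countable union over $\epsilon=1/n$ upgrades this to $\mathbb{P}(G_U(f_\theta(0,\omega))<\lambda(\theta))=0$; stationarity combined with Fubini and continuity of $f_\theta(\,\cdot\,,\omega)$ in $x$ promote this to ``for all $x$" almost surely.

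For the consequent scalar inequalities, the two a.s.\ bounds place $f_\theta(x,\omega)\in[-\ol R^{\lambda(\theta)},-\ul R^{\lambda(\theta)}]\cup[\ul R^{\lambda(\theta)},\ol R^{\lambda(\theta)}]$ for all $x$, with $\ul R^{\lambda(\theta)}:=0$ when $\lambda(\theta)\le G_U(0)$ (in which case the $G_U$ bound is trivial). When $\ul R^{\lambda(\theta)}>0$ the two intervals are disjoint, continuity of $f_\theta(\,\cdot\,,\omega)$ in $x$ confines it to exactly one of them, and the corresponding $(\tau_x)$-invariant events have probability $0$ or $1$ by ergodicity, so one definite sign prevails almost surely. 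Either way $|\theta|=|\mathbb{E}[f_\theta(0,\omega)]|\in[\ul R^{\lambda(\theta)},\ol R^{\lambda(\theta)}]$, and the evenness and monotonicity of $G_L,G_U$ together with the definitions of $\ol R^\lambda$ and $\ul R^\mu$ deliver $G_L(\theta)\le \lambda(\theta)\le G_U(\theta)$. The main obstacle I anticipate is the need to work at level $\lambda(\theta)-\epsilon$ rather than $\lambda(\theta)$ itself: this $\epsilon$-shrinkage is essential to obtain strict positivity of $f_\theta'$ on the \emph{closed} strip (since $G_U$ is allowed to have a plateau at level $\lambda(\theta)$), and then the passage to the limit $\epsilon\to 0$ must be handled via a countable union.
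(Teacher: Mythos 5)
Your proof is correct. Both halves of the pointwise inequality rest on the same underlying ideas as the paper's argument: the lower bound is a direct application of Lemma~\ref{lem:val}, and the upper bound rests on the observation that $f_\theta'\ge\epsilon$ on the closed strip $\{x:\,|f_\theta(x,\omega)|\le\ul R^{\lambda(\theta)-\epsilon}\}$, which bounds the transit time and hence the Lebesgue measure of that set by $2\ul R^{\lambda(\theta)-\epsilon}/\epsilon$. The difference lies in how the finiteness of the transit time is converted into an almost-sure statement. You apply Birkhoff's ergodic theorem to the indicator of $\{G_U(f_\theta(0,\omega))<\lambda(\theta)-\epsilon\}$ and read off that its probability equals the asymptotic density of $S_\epsilon(\omega)$, which is forced to be zero. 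The paper instead uses ergodicity to conclude $\mathbb{P}(A^{\lambda(\theta)-\epsilon}(\omega)\neq\emptyset)\in\{0,1\}$ and then derives a contradiction from the single-crossing bound under the hypothesis that this probability is $1$: stationarity would make $\sup A^{\lambda(\theta)-\epsilon}(\omega)=\infty$ a.s., whereas the crossing argument forces it to be finite. The two routes are essentially equivalent in substance; your Birkhoff version is perhaps slightly more transparent, the paper's contradiction version avoids invoking the ergodic theorem explicitly at this step.

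Where you genuinely diverge from the paper is in the derivation of the scalar bound $G_L(\theta)\le\lambda(\theta)\le G_U(\theta)$. The paper observes that either $f_\theta\equiv\theta$ a.s.\ (in which case the claim is immediate from the pointwise bound) or $\theta\in(\inf_xf_\theta(x,\omega),\sup_xf_\theta(x,\omega))$ a.s.\ by Birkhoff's theorem, so that $f_\theta(\,\cdot\,,\omega)$ attains the value $\theta$ by the intermediate value theorem and the pointwise bound applies at that point. You avoid the IVT entirely: the pointwise bound confines $f_\theta(\,\cdot\,,\omega)$ to $[-\ol R^{\lambda(\theta)},-\ul R^{\lambda(\theta)}]\cup[\ul R^{\lambda(\theta)},\ol R^{\lambda(\theta)}]$, and when $\ul R^{\lambda(\theta)}>0$ these components are disjoint, so continuity and the $(\tau_x)$-invariance of the corresponding events force $f_\theta(\,\cdot\,,\omega)$ into a single (convex) component a.s.; the mean $\theta$ then lies in that component as well, giving $|\theta|\in[\ul R^{\lambda(\theta)},\ol R^{\lambda(\theta)}]$. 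Both derivations are about the same length. The paper's is a little more economical and has no case split on $\ul R^{\lambda(\theta)}$; your version yields the extra structural fact that $f_\theta$ keeps a definite sign almost surely whenever $\lambda(\theta)>G_U(0)$, which is a nice observation that the paper does not make explicit.
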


\begin{proof}
	For every $\omega\in\Omega_{f_\theta}^{\lambda(\theta)}$ and $x\in\R$,
	\begin{equation}\label{eq:sevval}
		|f_\theta(x,\omega)| \le \ol R^{\lambda(\theta)}
	\end{equation}
	by Lemma \ref{lem:val}, i.e., $G_L( f_\theta(x,\omega)) \le \lambda(\theta)$.
	
	For every $\epsilon>0$, let
	\begin{align*}
		\ul R^{\lambda(\theta) - \epsilon} &= \min\{ p\ge0:\,G_U(p) \ge \lambda(\theta) - \epsilon\}\\
		\shortintertext{and}
		A^{\lambda(\theta) - \epsilon}(\omega) &= \{ x\in\R:\,|f_\theta(x,\omega)| \le \ul R^{\lambda(\theta) - \epsilon} \}.
	\end{align*}
	By ergodicity, $\mathbb{P}(A^{\lambda(\theta) - \epsilon}(\omega) \neq\emptyset) \in\{0,1\}$. Suppose (for the sake of reaching a contradiction) that $\mathbb{P}(A^{\lambda(\theta) - \epsilon}(\omega) \neq\emptyset) = 1$. Then, $\sup A^{\lambda(\theta) - \epsilon}(\omega) = \infty$ for $\mathbb{P}$-a.e.\ $\omega$. Fix such an $\omega\in\Omega_{f_\theta}^{\lambda(\theta)}$. Note that, for every $x\in A^{\lambda(\theta) - \epsilon}(\omega)$,
	\[ a(x,\omega)f_\theta'(x,\omega) \ge \lambda(\theta) - G_U(f_\theta(x,\omega)) \ge \epsilon, \]
	and, hence, $f_\theta'(x,\omega) \ge \epsilon$. 
	
	Fix any $x_0\in A^{\lambda(\theta) - \epsilon}(\omega)$. Let $y = \inf\{ x\ge x_0:\,x\notin A^{\lambda(\theta) - \epsilon}(\omega) \}$. Then, $y\le x_0 + 2\epsilon^{-1}\ul R^{\lambda(\theta) - \epsilon} + 1$. Indeed, otherwise,
	\begin{align*}
		f_\theta(x_0 + 2\epsilon^{-1}\ul R^{\lambda(\theta) - \epsilon} + 1,\omega) &= f_\theta(x_0,\omega) + \int_{x_0}^{x_0 + 2\epsilon^{-1}\ul R^{\lambda(\theta) - \epsilon} + 1} f_\theta'(x,\omega)dx\\
		&\ge -\ul R^{\lambda(\theta) - \epsilon} + \epsilon(2\epsilon^{-1}\ul R^{\lambda(\theta) - \epsilon} + 1) = \ul R^{\lambda(\theta) - \epsilon} + \epsilon,
	\end{align*}
	which is a contradiction.
	
	Note that $f_\theta(y,\omega) = \ul R^{\lambda(\theta) - \epsilon}$ and $f_\theta'(y,\omega) \ge \epsilon$. Let $z = \inf\{ x > y:\, f_\theta(x,\omega) \le \ul R^{\lambda(\theta) - \epsilon}\} > y$. We claim that $z = \infty$. Indeed, if $z < \infty$, then $f_\theta(z,\omega) = \ul R^{\lambda(\theta) - \epsilon}$ and $f_\theta'(z,\omega) \ge \epsilon$, so there exists a $\delta\in(0,z-y)$ such that $f_\theta(x,\omega) < \ol R^{\lambda(\theta) - \epsilon}$ for all $x\in(z-\delta,z)$, contradicting the definition of $z$. Therefore, $f_\theta(x,\omega) > \ul R^{\lambda(\theta) - \epsilon}$ for all $x > y$, i.e., $\sup A^{\lambda(\theta) - \epsilon}(\omega) = y < \infty$, contradicting our choice of $\omega$. We deduce that $\mathbb{P}(A^{\lambda(\theta) - \epsilon}(\omega) =\emptyset) = 1$, i.e.,
	\[ \mathbb{P}(G_U(f_\theta(x,\omega)) \ge \lambda(\theta) - \epsilon\ \text{for all $x\in\R$}) = 1. \]
	Since $\epsilon > 0$ is arbitrary, we conclude that $\mathbb{P}(G_U(f_\theta(x,\omega)) \ge \lambda(\theta)\ \text{for all $x\in\R$}) = 1$. In combination with the first paragraph, this proves the first assertion.
	
	The second assertion follows from the first one. This is trivial if $\mathbb{P}(f_\theta(x,\omega) = \theta\ \text{for all}\ x\in\R) = 1$. Otherwise,
	\[ \theta = \mathbb{E}[f_\theta(0,\omega)] = \lim_{x\to\infty}x^{-1}\int_0^xf_\theta(y,\omega)dy \in (\inf_{x\in\R} f_\theta(x,\omega),\sup_{x\in\R}f_\theta(x,\omega)) \]
	for $\mathbb{P}$-a.e.\ $\omega$, so $f_\theta(x,\omega) = \theta$ for some $x\in\R$ by the intermediate value theorem.
\end{proof}

\begin{lemma}\label{lem:closed}
	$E$ is a closed subset of $\R$. 
\end{lemma}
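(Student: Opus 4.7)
The goal is to show: if $(\theta_n)_{n\ge 1} \subset E$ with $\theta_n \to \theta$, then $\theta \in E$. The plan is to construct a stationary $\Con^1$ solution of \eqref{eq:ram} with mean $\theta$ by passing to the limit in the family $(f_{\theta_n})$ provided by Lemma \ref{lem:icab}. After passing to a subsequence, I may assume $(\theta_n)$ is either eventually equal to $\theta$ (trivial) or strictly monotone; by symmetry treat $\theta_n \nearrow \theta$ strictly.

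The first step is to establish monotonicity and a uniform bound for $(f_{\theta_n}(\cdot,\omega))$ on a full-measure event. The trichotomy argument from \cite[Lemma A.4]{DKY23+} used in the proof of Lemma \ref{lem:icab} shows that for any $m < n$, with probability $1$ exactly one of $f_{\theta_m} < f_{\theta_n}$, $f_{\theta_m} = f_{\theta_n}$, $f_{\theta_m} > f_{\theta_n}$ holds on all of $\R$; taking expectations eliminates the last two options since $\theta_m < \theta_n$, leaving $f_{\theta_m}(\cdot,\omega) < f_{\theta_n}(\cdot,\omega)$ almost surely. Fixing $\theta^* \in E$ with $\theta^* > \theta$ (available by Lemma \ref{lem:support}) and writing $f^* := f_{\theta^*}$, the same argument gives $f_{\theta_n}(\cdot,\omega) < f^*(\cdot,\omega)$ almost surely. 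Intersecting these countably many full-measure translation-invariant events yields $\Omega' \in \mathcal{F}$ of probability $1$ on which $(f_{\theta_n}(\cdot,\omega))$ is strictly increasing and bounded above by $f^*(\cdot,\omega)$. Lemma \ref{lem:emi} and the continuity of $G_L, G_U$ make $(\lambda(\theta_n))$ bounded, so I extract a further subsequence with $\lambda(\theta_n) \to \lambda$ for some $\lambda \in \R$.

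Define $h(x,\omega) := \lim_{n\to\infty} f_{\theta_n}(x,\omega)$ on $\R \times \Omega'$ and $h \equiv 0$ on $\R \times (\Omega \setminus \Omega')$. Then $h$ is stationary (by translation-invariance of $\Omega'$ and of each $f_{\theta_n}$), and bounded convergence gives $\mathbb{E}[h(0,\omega)] = \lim_n \theta_n = \theta$. To conclude, it remains to show $h \in \mathcal{S}_{st}^\lambda$. Here the nondegeneracy of $a$ is crucial: for each fixed $\omega$, assumption \eqref{A2} (with \eqref{A1}) makes $a(\cdot,\omega)$ continuous on $\R$, and $a > 0$ pointwise combined with compactness gives $a(\cdot,\omega) \ge c_{M,\omega} > 0$ on each $[-M,M]$. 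Rewriting \eqref{eq:hemcins} as
\[
f'_{\theta_n}(x,\omega) = \frac{\lambda(\theta_n) - H(f_{\theta_n}(x,\omega),x,\omega)}{a(x,\omega)},
\]
the uniform bound on $(f_{\theta_n})$ together with \eqref{H3} and \eqref{H4} yields a uniform bound on $(f'_{\theta_n}(\cdot,\omega))$ over $[-M,M]$. Arzelà-Ascoli plus the a priori pointwise limit then give local uniform convergence $f_{\theta_n}(\cdot,\omega) \to h(\cdot,\omega)$, and the displayed formula upgrades this to local uniform convergence of the derivatives to $[\lambda - H(h(\cdot,\omega),\cdot,\omega)]/a(\cdot,\omega)$. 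Passing to the limit in the ODE gives $h(\cdot,\omega) \in \Con^1(\R)$ with $a h' + H(h,\cdot,\omega) = \lambda$, so $h \in \mathcal{S}_{st}^\lambda$ and $\theta = \mathbb{E}[h(0,\omega)] \in E$.

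The main obstacle is the compactness step needed for Arzelà-Ascoli; this is precisely where $a > 0$ enters, supplying the required (pointwise-in-$\omega$, not uniform-in-$\omega$) Lipschitz bound for $(f_{\theta_n}(\cdot,\omega))$ on each compact interval. The $\omega$-dependence of the bound is harmless since we only need $h(\cdot,\omega) \in \Con^1(\R)$ for $\mathbb{P}$-almost every $\omega$.
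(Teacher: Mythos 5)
Your proof is correct and follows essentially the same route as the paper's: reduce to a monotone sequence, invoke the comparison/trichotomy lemma (\cite[Lemma A.4]{DKY23+}) to get a strictly increasing family $(f_{\theta_n}(\cdot,\omega))$ on a full-measure translation-invariant event, use the boundedness of $(\lambda(\theta_n))$ from Lemma \ref{lem:emi}, take the pointwise limit, upgrade to locally uniform $\Con^1$ convergence via the ODE and the local positivity of $a(\cdot,\omega)$, and then pass to the limit and apply bounded convergence. The only cosmetic differences are that you obtain the uniform upper bound by bounding by $f_{\theta^*}$ for a fixed $\theta^*\in E$ above $\theta$ rather than by quoting the uniform estimate $\|f_{\theta_n}(\cdot,\omega)\|_\infty \le \ol R^{\lambda(\theta_n)}$ from \eqref{eq:sevval}/Lemma \ref{lem:val}, and that you invoke Arzel\`a--Ascoli where the paper uses Dini; both are valid.
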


\begin{proof}
	Let $(\theta_n)_{n\ge1}$ be a sequence in $E$ that converges monotonically to some $\theta\in\R$. Assume for definiteness that $\theta_n\to\theta-$.
	For the sake of brevity, write $\lambda_n = \lambda(\theta_n)$, $f_n = f_{\theta_n} \in \mathcal{S}_{st}^{\lambda_n}$ and $\tilde \Omega_n = \Omega_{f_n}^{\lambda_n}$ (see Remark \ref{rem:grua}).
	Let $\tilde \Omega_0 = \bigcap_{n\ge1}\tilde\Omega_n$. Note that $\mathbb{P}(\tilde\Omega_0) = 1$ and it is invariant under $(\tau_z)_{z\in\R}$.
	
	By \cite[Lemma A.4]{DKY23+}, the event
	\[ \hat\Omega_0 = \{\omega\in\tilde\Omega_0:\,(f_n - f_{n+1})(x,\omega) < 0\ \text{for all $n\ge1$ and $x\in\R$}\} \]
	has probability $1$, and it is invariant under $(\tau_z)_{z\in\R}$.
	
	By Lemma \ref{lem:emi}, $(\lambda_n)_{n\ge1}$ is a bounded sequence, so it has a subsequence (not relabeled) that converges to some $\lambda\in\R$.
	For every $\omega\in\hat\Omega_0$, $(f_n(\,\cdot\,,\omega))_{n\ge1}$ is equi-bounded by \eqref{eq:sevval}. Moreover, under our choice of notation, \eqref{eq:hemcins} becomes
	\begin{equation}\label{eq:ramen}
		a(x,\omega)f_n'(x,\omega) + H(f_n(x,\omega),x,\omega) = \lambda_n,\quad x\in\R,
	\end{equation}
	from which we deduce that $(f_n(\,\cdot\,,\omega))_{n\ge1}$ is equi-Lipschitz on $[-M,M]$ for all $M>0$.
	
	We define $f:\R\times\hat\Omega_0\to\R$ by
	\begin{equation}\label{eq:mon}
		f(x,\omega) = \lim_{n\to\infty}f_n(x,\omega) = \sup_{n\ge1}f_n(x,\omega).
	\end{equation}
	It follows that $f(\,\cdot\,,\omega)\in\Lip(\R)$. By Dini's theorem, the monotone convergence in \eqref{eq:mon} is locally uniform on $\R$.
	Moreover, we see from \eqref{eq:ramen} that $f_n'(\,\cdot\,,\omega)$ converges locally uniformly to some $g(\,\cdot\,,\omega) \in \Con(\R)$. Observe that
	\[ f(x,\omega) - f(0,\omega) = \lim_{n\to\infty} \left( f_n(x,\omega) - f_n(0,\omega) \right) = \lim_{n\to\infty} \int_0^x f_n'(y,\omega)dy = \int_0^x g(y,\omega)dy \]
	for all $x\in\R$. Therefore, $f(\,\cdot\,,\omega)\in\Con^1(\R)$, $f'(\,\cdot\,,\omega) = g(\,\cdot\,,\omega)$, and
	\[ a(x,\omega)f'(x,\omega) + H(f(x,\omega),x,\omega) = \lambda\ \text{for all}\ x\in\R.\]
	
	For every $\omega\in\hat\Omega_0$ and $x\in\R$,
	\[ f(x,\omega) = \lim_{n\to\infty}f_n(x,\omega) = \lim_{n\to\infty}f_n(0,\tau_x\omega) = f(0,\tau_x\omega). \]
	
	Finally, we extend $f$ from $\R\times\hat\Omega_0$ to $\R\times\Omega$ by setting $f(x,\omega) = 0$ for all $\omega\in\Omega\setminus\hat\Omega_0$ and $x\in\R$. Since $\hat\Omega_0$ has probability $1$ and it is invariant under $(\tau_z)_{z\in\R}$, this extension (still denoted by $f$) is stationary, and hence, it is in $\mathcal{S}_{st}^\lambda$. By the bounded convergence theorem,
	\[ \theta = \lim_{n\to\infty}\theta_n = \lim_{n\to\infty} \mathbb{E}[f_n(0,\omega)] = \mathbb{E}[f(0,\omega)] \in E. \qedhere \]
\end{proof}

We write 
\begin{equation}\label{eq:gs}
	E^c = \bigcup_{j\in J}(\theta_L^{(j)},\theta_R^{(j)}),
\end{equation}
i.e., as a disjoint union of open intervals, where the index set $J\in\mathbb{N}$ is finite (possibly empty) or countably infinite. 

\begin{lemma}\label{lem:esit}
	$\lambda(\theta_L^{(j)}) = \lambda(\theta_R^{(j)})$ for all $j\in J$. 
\end{lemma}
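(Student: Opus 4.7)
The plan is to argue by contradiction, following the sketch given in Section~\ref{sub:ps}. Fix $j\in J$ and, for brevity, write $\theta_1=\theta_L^{(j)}$, $\theta_2=\theta_R^{(j)}$, $\lambda_i=\lambda(\theta_i)$ and $f_i=f_{\theta_i}\in\mathcal{S}_{st}^{\lambda_i}$ for $i=1,2$, as in \eqref{eq:conv}. Suppose for contradiction that $\lambda_1\neq\lambda_2$; without loss of generality, $\lambda_1<\lambda_2$. Set $\lambda_0=\frac12(\lambda_1+\lambda_2)$, so that $\lambda_1<\lambda_0<\lambda_2$.

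The first step is to observe that since $\theta_1<\theta_2$ and both lie in $E$, the uniqueness/trichotomy fact from \cite[Lemma A.4]{DKY23+} that was invoked in the proof of Lemma~\ref{lem:icab} forces
\[
\mathbb{P}(f_1(x,\omega)<f_2(x,\omega)\text{ for all }x\in\R)=1,
\]
because the alternatives $f_1\equiv f_2$ and $f_1>f_2$ would contradict $\theta_1<\theta_2=\mathbb{E}[f_2(0,\omega)]$. Moreover, plugging $f_1$ and $f_2$ into the left-hand side of \eqref{eq:ram} with level $\lambda_0$ gives $\lambda_1<\lambda_0$ and $\lambda_2>\lambda_0$ respectively on the full-measure events $\Omega_{f_1}^{\lambda_1}$ and $\Omega_{f_2}^{\lambda_2}$, so $f_1\in\mathcal{S}_{st,-}^{\lambda_0}$ and $f_2\in\mathcal{S}_{st,+}^{\lambda_0}$ in the sense of Definition~\ref{def:alarca}.

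The second step is to apply Theorem~\ref{thm:zek} with this $\lambda_0$ and this pair $(f_1,f_2)$ (the hypothesis \eqref{eq:hu1} holds), producing $f_0\in\mathcal{S}_{st}^{\lambda_0}$ such that $f_1(\,\cdot\,,\omega)\le f_0(\,\cdot\,,\omega)\le f_2(\,\cdot\,,\omega)$ on $\R$ for $\mathbb{P}$-a.e.\ $\omega$. Invoking the trichotomy once more on the pairs $(f_0,f_1)$ and $(f_0,f_2)$: since $f_1$ solves \eqref{eq:ram} at level $\lambda_1\neq\lambda_0$, the event $\{f_0\equiv f_1\}$ has probability $0$, and combined with $f_0\ge f_1$ a.s.\ we conclude $f_0>f_1$ pointwise a.s.; the analogous reasoning gives $f_0<f_2$ pointwise a.s.

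Taking expectations via Birkhoff's theorem \eqref{eq:erg}, the strict inequalities yield
\[
\theta_1=\mathbb{E}[f_1(0,\omega)]<\mathbb{E}[f_0(0,\omega)]<\mathbb{E}[f_2(0,\omega)]=\theta_2,
\]
so $\theta_0:=\mathbb{E}[f_0(0,\omega)]\in(\theta_1,\theta_2)$. But $f_0\in\mathcal{S}_{st}\subset\mathcal{S}_{st}^{\lambda_0}$ puts $\theta_0\in E$, contradicting $(\theta_1,\theta_2)\subset E^c$ from \eqref{eq:gs}. Hence $\lambda_1=\lambda_2$, as claimed.

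The main obstacle is verifying the strict interlacing $f_1<f_0<f_2$ a.s.\ pointwise on $\R$, rather than the merely weak bounds one gets directly from Theorem~\ref{thm:zek}; this is precisely what makes the expectation $\theta_0$ lie strictly inside $(\theta_1,\theta_2)$ and yields the contradiction. Everything else reduces to correctly packaging $f_1$ and $f_2$ as a stationary sub/supersolution sandwich for the intermediate level $\lambda_0$ and invoking the trichotomy lemma from \cite{DKY23+}.
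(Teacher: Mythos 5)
Your argument is essentially identical to the paper's: set $\lambda_0$ to be the midpoint of $\lambda_1$ and $\lambda_2$, recognize that $(f_1,f_2)$ forms a stationary lower/upper (or upper/lower) solution pair at level $\lambda_0$, apply Theorem~\ref{thm:zek} to extract $f_0\in\mathcal{S}_{st}^{\lambda_0}$ sandwiched between $f_1$ and $f_2$, upgrade weak to strict inequalities using the trichotomy in \cite[Lemma A.4]{DKY23+} together with $\lambda_0\notin\{\lambda_1,\lambda_2\}$, and contradict $(\theta_1,\theta_2)\subset E^c$.

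One small but genuine slip: the reduction ``without loss of generality, $\lambda_1<\lambda_2$'' is not justified. The ordering $\theta_1<\theta_2$ is fixed by the labeling of the interval endpoints in \eqref{eq:gs}, and there is no symmetry of the problem that exchanges the two signs of $\lambda_1-\lambda_2$; simply relabeling would break $\theta_1<\theta_2$. The case $\lambda_1>\lambda_2$ must be treated separately, as the paper does: there $f_1\in\mathcal{S}_{st,+}^{\lambda_0}$ and $f_2\in\mathcal{S}_{st,-}^{\lambda_0}$, so one invokes hypothesis~\eqref{eq:hu2} of Theorem~\ref{thm:zek} in place of~\eqref{eq:hu1}; the remainder of the argument is unchanged. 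Also, the inclusion you wrote as $\mathcal{S}_{st}\subset\mathcal{S}_{st}^{\lambda_0}$ should read $\mathcal{S}_{st}^{\lambda_0}\subset\mathcal{S}_{st}$, but this is clearly a typo.
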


\begin{proof}
	Suppose that $\lambda(\theta_L^{(j)}) \neq \lambda(\theta_R^{(j)})$ for some $j\in J$. For the sake of notational brevity, write $\theta_1 = \theta_L^{(j)}$, $\theta_2=\theta_R^{(j)}$, $\lambda_1 = \lambda(\theta_1)$, $\lambda_2 = \lambda(\theta_2)$, $f_1 = f_{\theta_1}$ and $f_2 = f_{\theta_2}$. Since $\theta_1 < \theta_2$, the event $\tilde\Omega_{1,2}$ (see \eqref{eq:ahanda})
	has probability $1$ by \cite[Lemma A.4]{DKY23+}. Let $\Omega_{1,2} = \tilde\Omega_{1,2} \cap\Omega_{f_1}^{\lambda_1}\cap\Omega_{f_2}^{\lambda_2}$ and $\lambda_0 = \frac{\lambda_1 + \lambda_2}{2}$. There are two cases:
	\begin{itemize}
		\item [(i)] if $\lambda_1 < \lambda_2$, then $f_1\in\mathcal{S}_{st,-}^{\lambda_0}$ and $f_2\in\mathcal{S}_{st,+}^{\lambda_0}$;
		\item [(ii)] if $\lambda_1 > \lambda_2$, then $f_1\in\mathcal{S}_{st,+}^{\lambda_0}$ and $f_2\in\mathcal{S}_{st,-}^{\lambda_0}$.
	\end{itemize}
	By Theorem \ref{thm:zek}, there exists an $f_0\in\mathcal{S}_{st}^{\lambda_0}$ such that $f_1(\,\cdot\,,\omega) \le f_0(\,\cdot\,,\omega) \le f_2(\,\cdot\,,\omega)$ for all $\omega\in\Omega_{1,2}$.  In fact, $f_1(\,\cdot\,,\omega) < f_0(\,\cdot\,,\omega) < f_2(\,\cdot\,,\omega)$ for $\mathbb{P}$-a.e.\ $\omega\in\Omega_{1,2}$ since $\lambda_0\not\in\{ \lambda_1,\lambda_2\}$ (see \cite[Lemma A.4]{DKY23+}). Therefore, $\mathbb{E}[f_0(0,\omega)] \in E\cap(\theta_1,\theta_2) = \emptyset$, which is a contradiction.
\end{proof}

We define $\ol H:\R\to\R$ by setting
\begin{equation}\label{eq:alman}
	\ol H(\theta) = \begin{cases} \lambda(\theta)&\text{if}\ \theta\in E,\\
		\ol\lambda_j &\text{if}\ \theta\in(\theta_L^{(j)},\theta_R^{(j)})\ \text{for any}\ j\in J,
	\end{cases}
\end{equation}
where $\ol\lambda_j = \lambda(\theta_L^{(j)}) = \lambda(\theta_R^{(j)})$ (see \eqref{eq:gs} and Lemma \ref{lem:esit}). We have hereby constructed our candidate for the effective Hamiltonian, which is locally Lipschitz by the following lemma. 

\begin{lemma}\label{lem:lam}
	For every $\theta_1,\theta_2\in E$,
	\begin{equation}\label{eq:emi}
		|\lambda(\theta_1) - \lambda(\theta_2)| \le K_R|\theta_1 - \theta_2|,
	\end{equation}
	where $R = \max\{\ol R^{G_U(\theta_1)},\ol R^{G_U(\theta_2)}\}$ with $\ol{R}^\lambda$ defined in \eqref{eq:ar}. Consequently, $\ol H\in\Liploc(\R)$.
\end{lemma}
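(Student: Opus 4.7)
The plan is to reduce \eqref{eq:emi} to a bound on the essential infimum of $g := f_{\theta_1} - f_{\theta_2}$ and then to read off $|\lambda(\theta_1) - \lambda(\theta_2)|$ from \eqref{eq:hemcins} at critical points of $g(\cdot,\omega)$. Assume without loss of generality $\theta_1 > \theta_2$. Lemma \ref{lem:emi} gives $|f_{\theta_i}(x,\omega)| \le \ol R^{\lambda(\theta_i)} \le \ol R^{G_U(\theta_i)} \le R$ almost surely (using that $\lambda \mapsto \ol R^\lambda$ is nondecreasing), and by \cite[Lemma A.4]{DKY23+} one has $f_{\theta_1} > f_{\theta_2}$ on $\R$ almost surely. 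Hence $g$ is a stationary, strictly positive, $\Con^1$ process with $\mathbb{E}[g(0,\omega)] = \theta_1 - \theta_2$. Subtracting the two instances of \eqref{eq:hemcins} and defining $\beta(x,\omega) := [H(f_{\theta_1}(x,\omega),x,\omega) - H(f_{\theta_2}(x,\omega),x,\omega)]/g(x,\omega)$ (so that $|\beta| \le K_R$ by \eqref{H3}) yields
\[ a(x,\omega) g'(x,\omega) + \beta(x,\omega) g(x,\omega) = \lambda(\theta_1) - \lambda(\theta_2). \]
At any critical point $x_*$ of $g(\cdot,\omega)$ this collapses to $\beta(x_*,\omega) g(x_*,\omega) = \lambda(\theta_1) - \lambda(\theta_2)$, giving the pointwise estimate $|\lambda(\theta_1) - \lambda(\theta_2)| \le K_R g(x_*,\omega)$.

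It therefore suffices to produce critical points at which $g(\cdot,\omega)$ is arbitrarily close to its infimum. Let $m := \inf_x g(x,\omega)$ and $M := \sup_x g(x,\omega)$; both are $\tau_z$-invariant and hence $\mathbb{P}$-a.s.\ equal to constants, and a short argument using stationarity and continuity of $g$ identifies $m = \essinf g(0,\omega)$ and $M = \esssup g(0,\omega)$. If $m = M$ then $g \equiv \theta_1 - \theta_2$ and $g' \equiv 0$, so the ODE reduces to $H(f_{\theta_1},x,\omega) - H(f_{\theta_2},x,\omega) = \lambda(\theta_1) - \lambda(\theta_2)$ and \eqref{H3} gives \eqref{eq:emi} immediately. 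Otherwise I fix $\epsilon \in (0,(M-m)/2)$ and consider the open set $A_\epsilon(\omega) := \{x : g(x,\omega) < m + \epsilon\}$.

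The main obstacle is to show that almost surely every connected component of $A_\epsilon(\omega)$ is bounded. The event that $A_\epsilon(\omega)$ contains some right ray is $\tau_z$-invariant, hence by ergodicity has probability $0$ or $1$; probability $1$ would force $\limsup_{x\to\infty} g(x,\omega) \le m + \epsilon$ almost surely, and since $g(n,\omega) \stackrel{d}{=} g(0,\omega)$ for every $n$ this would yield $\mathbb{P}(g(0,\omega) > m + 2\epsilon) = 0$, contradicting $m + 2\epsilon < M = \esssup g(0,\omega)$. The left-ray case is symmetric. Once all components of $A_\epsilon(\omega)$ are bounded, on any such component $(a,b)$ continuity gives $g(a,\omega) = g(b,\omega) = m + \epsilon$, so $g(\cdot,\omega)$ attains an interior minimum $x_* \in (a,b)$ with $g'(x_*,\omega) = 0$ and $g(x_*,\omega) < m + \epsilon$. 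The first paragraph then gives $|\lambda(\theta_1) - \lambda(\theta_2)| \le K_R(m + \epsilon)$; sending $\epsilon \to 0^+$ and using $m \le \mathbb{E}[g(0,\omega)] = \theta_1 - \theta_2$ completes the proof of \eqref{eq:emi}.

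For the consequence that $\ol H \in \Liploc(\R)$, fix $N > 0$ and set $R_N := \ol R^{G_U(N)}$. Since $G_U$ is even and nondecreasing on $[0,\infty)$ and $\lambda \mapsto \ol R^\lambda$ is nondecreasing, \eqref{eq:emi} holds with the uniform constant $K_{R_N}$ for all $\theta_1,\theta_2 \in E \cap [-N,N]$. By \eqref{eq:alman} and Lemma \ref{lem:esit}, $\ol H$ extends $\lambda$ by the constant value $\ol\lambda_j = \lambda(\theta_L^{(j)}) = \lambda(\theta_R^{(j)})$ on each component $(\theta_L^{(j)},\theta_R^{(j)})$ of $E^c$; a brief case analysis (both points in $E$, both in a common component of $E^c$, or straddling its endpoints) propagates this $K_{R_N}$-Lipschitz bound to every pair in $[-N,N]$, so $\ol H$ is Lipschitz on $[-N,N]$ with constant $K_{R_N}$.
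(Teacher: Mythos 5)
Your proof is correct, and it takes a genuinely different route from the paper's. The paper proves \eqref{eq:emi} by a case analysis on the sign of $\lambda(\theta_1)-\lambda(\theta_2)$, in each nontrivial case invoking a quantitative separation lemma from \cite{DKY23+} (their Lemma A.5): if $f_{\theta_1}\in\mathcal{S}_{st}^{\lambda_1}$, $f_{\theta_2}\in\mathcal{S}_{st}^{\lambda_2}$ with $\lambda_1<\lambda_2$, then $f_{\theta_1}-f_{\theta_2}<-K_R^{-1}\epsilon$ a.s.\ for any $\epsilon<\lambda_2-\lambda_1$; taking expectations and letting $\epsilon\uparrow\lambda_2-\lambda_1$ gives the Lipschitz bound. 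You instead work with $g=f_{\theta_1}-f_{\theta_2}$ directly: subtracting the two ODEs produces a linear first-order ODE for $g$ with bounded coefficient $\beta$, at whose critical points the level difference is pinned by $|\lambda(\theta_1)-\lambda(\theta_2)|\le K_R\,g(x_*,\omega)$. The remaining work is to manufacture critical points with $g(x_*,\omega)$ arbitrarily close to $m=\essinf g(0,\omega)\le\mathbb{E}[g(0,\omega)]=|\theta_1-\theta_2|$, which you do by a clean ergodicity argument showing the sublevel set $A_\epsilon(\omega)$ has only bounded components, each containing an interior minimizer. What each buys: the paper's route is shorter because it offloads the hard estimate to an external lemma that proves uniform separation along the whole line; yours is self-contained and somewhat more elementary, needing only a single critical point per $\omega$ rather than a uniform pointwise bound, at the cost of the extra component-boundedness step. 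Your treatment of the $m=M$ degenerate case and the final propagation of the Lipschitz constant across gaps of $E$ (using that $\ol{H}$ is constant on each component of $E^c$ with value matching the endpoints) are both sound.
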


\begin{proof}
	Recall from \eqref{eq:sevval} and Lemma \ref{lem:emi} that, for every $\theta\in E$ and $\omega\in\Omega_{f_\theta}^{\lambda(\theta)}$,
	\[ \|f_\theta(\,\cdot\,,\omega)\|_\infty \le \ol R^{\lambda(\theta)} \le \ol R^{G_U(\theta)}. \]
	Fix any $\theta_1,\theta_2\in E$. It suffices to consider $\theta_1 < \theta_2$. There are three cases:
	\begin{itemize}
		\item [(i)] If $\lambda(\theta_1) = \lambda(\theta_2)$, then \eqref{eq:emi} holds trivially.
		\item [(ii)] If $\lambda(\theta_1) < \lambda(\theta_2)$, then fix any $\epsilon \in (0, \lambda(\theta_2)- \lambda(\theta_1))$. Applying \cite[Lemma A.5]{DKY23+} with $f_1 = f_{\theta_1}$ and $f_2 = f_{\theta_2}$, we deduce that
		\begin{equation}\label{eq:salih}
			\mathbb{P}((f_{\theta_1} - f_{\theta_2})(x,\omega) < -K_R^{-1}\epsilon\ \text{for all}\ x\in\R) = 1,
		\end{equation}
		where $R = \max\{\ol R^{G_U(\theta_1)},\ol R^{G_U(\theta_2)}\}$. Therefore,
		\begin{equation}\label{eq:nova}
			\theta_1 - \theta_2 = \mathbb{E}[(f_{\theta_1} - f_{\theta_2})(0,\omega)] \le -K_R^{-1}\epsilon.
		\end{equation}
		Since $\epsilon \in (0, \lambda(\theta_2)- \lambda(\theta_1))$ is arbitrary, we conclude that $\theta_1 - \theta_2 \le -K_R^{-1}(\lambda(\theta_2) - \lambda(\theta_1))$, which implies \eqref{eq:emi}.
		\item [(iii)] If $\lambda(\theta_1) > \lambda(\theta_2)$, then fix any $\epsilon \in (0, \lambda(\theta_1)- \lambda(\theta_2))$. Applying \cite[Lemma A.5]{DKY23+} with $f_1 = f_{\theta_2}$ and $f_2 = f_{\theta_1}$, we deduce that \eqref{eq:salih} holds, and we have \eqref{eq:nova}. Since $\epsilon \in (0, \lambda(\theta_1)- \lambda(\theta_2))$ is arbitrary, we conclude that $\theta_1 - \theta_2 \le -K_R^{-1}(\lambda(\theta_1) - \lambda(\theta_2))$, which implies \eqref{eq:emi}. \qedhere
	\end{itemize}
\end{proof}

\subsection{The bridging lemma}

We will use the following existence result in the proof of Lemma \ref{lem:LBUB} to show that the function $\ol H$ (see \eqref{eq:alman}) indeed gives the effective Hamiltonian on each connected component of $E^c$. 

\begin{lemma}\label{lem:kibbe}
	Suppose that $J\neq\emptyset$ and fix any $j\in J$. For the sake of convenience, write
	$\theta_1 = \theta_L^{(j)}$, $\theta_2=\theta_R^{(j)}$, $f_1=f_{\theta_1}$ and $f_2=f_{\theta_2}$.
	Then, there exists an $\ol\Omega_j\in\mathcal{F}$ with $\mathbb{P}(\ol\Omega_j) = 1$ such that the following hold for every $\omega\in\ol\Omega_j$:
	\begin{itemize}
		\item [(a)] for every $\delta\in(0,1)\cap\mathbb{Q}$, there exist $\ul z_1,\ol z_2\in\R$ with $\ul z_1 < \ol z_2$ and $f_{1,2}^{\ol\lambda_j + \delta}(\,\cdot\,,\omega)\in\Con^1([\ul z_1,\ol z_2])$ such that
		\begin{equation}\label{eq:sahlepvis}
			a(x,\omega)(f_{1,2}^{\ol\lambda_j + \delta})'(x,\omega) + H(f_{1,2}^{\ol\lambda_j + \delta}(x,\omega),x,\omega) = \ol\lambda_j + \delta\ \text{for all $x\in[\ul z_1,\ol z_2]$},
		\end{equation}
		$f_{1,2}^{\ol\lambda_j + \delta}(\ul z_1,\omega) = f_1(\ul z_1,\omega)$ and $f_{1,2}^{\ol\lambda_j + \delta}(\ol z_2,\omega) = f_2(\ol z_2,\omega)$;
		\item [(b)] for every $\delta\in(0,1)\cap\mathbb{Q}$, there exist $\ul z_2,\ol z_1\in\R$ with $\ul z_2 < \ol z_1$ and $f_{2,1}^{\ol\lambda_j - \delta}(\,\cdot\,,\omega)\in\Con^1([\ul z_2,\ol z_1])$ such that
		\begin{equation}\label{eq:kahvevis}
			a(x,\omega)(f_{2,1}^{\ol\lambda_j - \delta})'(x,\omega) + H(f_{2,1}^{\ol\lambda_j - \delta}(x,\omega),x,\omega) = \ol\lambda_j - \delta\ \text{for all $x\in[\ul z_2,\ol z_1]$},
		\end{equation}
		$f_{2,1}^{\ol\lambda_j - \delta}(\ul z_2,\omega) = f_2(\ul z_2,\omega)$ and $f_{2,1}^{\ol\lambda_j - \delta}(\ol z_1,\omega) = f_1(\ol z_1,\omega)$.
	\end{itemize}
\end{lemma}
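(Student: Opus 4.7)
The plan for part~(a) is to construct the bridging function as a restriction of a Cauchy‑problem solution of the ODE \eqref{eq:ram} at level $\ol\lambda_j+\delta$ starting from $f_1(\ul z_1,\omega)$, showing that it must reach $f_2$ at a finite $\ol z_2>\ul z_1$; the content of the lemma is the existence of some admissible $\ul z_1$. Existence is established by contradiction: if for no $\ul z_1$ such a $\ol z_2$ exists, then, by compactness, one produces a bounded $C^1$ solution of the ODE at the perturbed level on all of $\R$ trapped between $f_1$ and $f_2$; a stationarization argument in the spirit of Theorem~\ref{thm:zek} then yields a stationary solution in $\mathcal{S}_{st}^{\ol\lambda_j+\delta}$ whose mean lies in $[\theta_1,\theta_2]\cap E=\{\theta_1,\theta_2\}$, contradicting the uniqueness statement in Lemma~\ref{lem:icab} (because $f_{\theta_1}=f_1$ and $f_{\theta_2}=f_2$ are at the distinct level $\ol\lambda_j$). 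Part~(b) is symmetric, using $\ol\lambda_j-\delta$ with $f_1,f_2$ now serving as strict supersolutions, and the full-measure event $\ol\Omega_j$ is obtained by intersecting the individual full-measure events over $\delta\in(0,1)\cap\mathbb{Q}$.

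In more detail, fix $\delta\in(0,1)\cap\mathbb{Q}$. At level $\ol\lambda_j+\delta$ both $f_1$ and $f_2$ are stationary strict subsolutions (since $af_i'+H(f_i,x,\omega)=\ol\lambda_j<\ol\lambda_j+\delta$). The event
\[ \mathcal{E}_j^\delta=\{\omega\in\Omega:\mathcal{S}^{\ol\lambda_j+\delta}(f_1(\,\cdot\,,\omega),f_2(\,\cdot\,,\omega),\omega)\neq\emptyset\} \]
is invariant under $(\tau_z)_{z\in\R}$ by the shift-equivariance \eqref{eq:adria}, so $\mathbb{P}(\mathcal{E}_j^\delta)\in\{0,1\}$. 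Were $\mathbb{P}(\mathcal{E}_j^\delta)=1$, then, following the proof of Theorem~\ref{thm:zek} (whose only use of Lemma~\ref{lem:bosdegil} is to ensure $\mathcal{S}^{\ol\lambda_j+\delta}(f_1,f_2,\omega)\neq\emptyset$ almost surely), the pointwise supremum
\[ \ol f^\delta(x,\omega)=\sup\{f(x):f\in\mathcal{S}^{\ol\lambda_j+\delta}(f_1(\,\cdot\,,\omega),f_2(\,\cdot\,,\omega),\omega)\} \]
would define (after extension by $0$ off the invariant full-measure event) an element of $\mathcal{S}_{st}^{\ol\lambda_j+\delta}$ with $f_1\le\ol f^\delta\le f_2$ almost surely. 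Then $\mathbb{E}[\ol f^\delta(0,\omega)]\in[\theta_1,\theta_2]\cap E=\{\theta_1,\theta_2\}$, so Lemma~\ref{lem:icab} forces $\ol f^\delta$ to coincide with $f_1$ or $f_2$, both solutions of \eqref{eq:ram} at level $\ol\lambda_j\neq\ol\lambda_j+\delta$, which is absurd. Hence $\mathbb{P}(\mathcal{E}_j^\delta)=0$.

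Now fix $\omega$ in the intersection of $\Omega\setminus\mathcal{E}_j^\delta$ with $\Omega_{f_1}^{\ol\lambda_j}\cap\Omega_{f_2}^{\ol\lambda_j}$ (see Remark~\ref{rem:grua}) and the full-measure event \eqref{eq:ahanda} on which $f_1<f_2$ pointwise. For any $\ul z_1\in\R$, let $\psi^{\ul z_1}(\,\cdot\,,\omega)$ be the unique (Picard--Lindel\"of) solution of $a\psi'+H(\psi,x,\omega)=\ol\lambda_j+\delta$ with $\psi(\ul z_1)=f_1(\ul z_1,\omega)$. Subtracting the ODE for $f_1$ gives $a(\psi^{\ul z_1}-f_1)'(\ul z_1)=\delta>0$, and the same cancellation at any subsequent crossing of $f_1$ shows the crossing must be strictly upward, so $\psi^{\ul z_1}>f_1$ on $(\ul z_1,\infty)$ throughout the domain of existence. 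If no $\ul z_1$ yielded a finite $\ol z_2$ with $\psi^{\ul z_1}(\ol z_2)=f_2(\ol z_2,\omega)$, then $f_1\le\psi^{\ul z_1}<f_2$ on $[\ul z_1,\infty)$; in particular $\psi^{\ul z_1}$ is uniformly bounded (by $\ol R^{\ol\lambda_j}$) and extends to all of $[\ul z_1,\infty)$. Taking $\ul z_1=-n$ with $n\to\infty$ and running the Arzel\`a--Ascoli argument of Lemma~\ref{lem:eksik} together with a diagonal extraction produces a bounded $C^1$ solution on $\R$ belonging to $\mathcal{S}^{\ol\lambda_j+\delta}(f_1,f_2,\omega)$, contradicting $\omega\notin\mathcal{E}_j^\delta$. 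Thus some $\ul z_1=\ul z_1(\omega,\delta)$ gives a finite $\ol z_2$, and $f_{1,2}^{\ol\lambda_j+\delta}(\,\cdot\,,\omega):=\psi^{\ul z_1}|_{[\ul z_1,\ol z_2]}$ fulfills (a). Part~(b) is obtained in the symmetric setup: at $\ol\lambda_j-\delta$ the $f_i$ are stationary strict supersolutions, the Cauchy solution initialized at $f_2(\ul z_2,\omega)$ moves strictly below $f_2$ and is trapped below it, and the same dichotomy forces it to hit $f_1$ at a finite $\ol z_1>\ul z_2$ for some $\ul z_2$.

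The main obstacle is the stationarization step: one must verify that the supremum construction of Theorem~\ref{thm:zek} still produces a measurable, stationary element of $\mathcal{S}_{st}^{\ol\lambda_j+\delta}$ when the nonemptiness of $\mathcal{S}^{\ol\lambda_j+\delta}(f_1,f_2,\omega)$ is taken as a hypothesis rather than deduced from a strict sub/supersolution pair through Lemma~\ref{lem:bosdegil}. Once this adaptation is in place, the contradiction via Lemma~\ref{lem:icab} and the elementary phase-line analysis at the meeting curves $\{\psi^{\ul z_1}=f_1\}$ and $\{\psi^{\ul z_1}=f_2\}$ are essentially routine.
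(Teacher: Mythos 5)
Your argument is correct and follows essentially the same route as the paper's proof: the ergodicity-based zero-one dichotomy for the event that $\mathcal{S}^{\ol\lambda_j\pm\delta}(f_1,f_2,\omega)\neq\emptyset$, ruling out probability one by stationarization and the emptiness of $E\cap(\theta_1,\theta_2)$, then the Picard--Lindel\"of shooting argument with Arzel\`a--Ascoli and a diagonal extraction to show that a shot from $f_1$ must hit $f_2$ in finite length on the complementary full-measure event. The only mild divergence is in the contradiction step: the paper invokes \cite[Lemma A.4]{DKY23+} to upgrade $f_1\le f^{\ol\lambda_j\pm\delta}\le f_2$ to strict inequalities and concludes $\mathbb{E}[f^{\ol\lambda_j\pm\delta}(0,\omega)]\in(\theta_1,\theta_2)\cap E=\emptyset$, whereas you work with the closed interval and appeal to the uniqueness in Lemma~\ref{lem:icab} (which forces the level to be $\ol\lambda_j$ rather than $\ol\lambda_j\pm\delta$); both are valid. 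The ``obstacle'' you flag at the end — reusing Theorem~\ref{thm:zek}'s measurable-supremum construction with the nonemptiness of $\mathcal{S}^{\ol\lambda_j\pm\delta}(f_1,f_2,\omega)$ taken as a hypothesis rather than supplied by Lemma~\ref{lem:bosdegil} — is exactly the adaptation the paper makes, and it is indeed unproblematic because Lemma~\ref{lem:bosdegil} enters the proof of Theorem~\ref{thm:zek} only to guarantee nonemptiness; you were right to notice that this needs stating, but it is not a gap.
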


\begin{proof}
	Note that the event $\Omega_{1,2}^{\ol\lambda_j} = \tilde\Omega_{1,2} \cap\Omega_{f_1}^{\ol\lambda_j}\cap\Omega_{f_2}^{\ol\lambda_j}$ (with $\tilde\Omega_{1,2}$ defined in \eqref{eq:ahanda}) has probability $1$ and it is invariant under $(\tau_z)_{z\in\R}$. With the simplified notation in the statement of the lemma, for every $i\in\{1,2\}$, $\omega\in\Omega_{1,2}^{\ol\lambda_j}$ and $x\in\R$,
	\begin{equation}\label{eq:kamgoz}
		a(x,\omega)f_i'(x,\omega) + H(f_i(x,\omega),x,\omega) = \ol\lambda_j.
	\end{equation}
	
	Fix any $\delta\in(0,1)\cap\mathbb{Q}$. Recall Definition \ref{def:ine}. Let
	\begin{align*}
		\Omega_{1,2}^{\ol\lambda_j + \delta} &= \{\omega\in\Omega_{1,2}^{\ol\lambda_j}:\,\mathcal{S}^{\ol\lambda_j + \delta}(f_1(\,\cdot\,,\omega),f_2(\,\cdot\,,\omega),\omega)\neq\emptyset \}
		\shortintertext{and}
		\Omega_{1,2}^{\ol\lambda_j - \delta} &= \{\omega\in\Omega_{1,2}^{\ol\lambda_j}:\,\mathcal{S}^{\ol\lambda_j - \delta}(f_1(\,\cdot\,,\omega),f_2(\,\cdot\,,\omega),\omega)\neq\emptyset \}.
	\end{align*}
	It follows from \eqref{eq:adria} that $\Omega_{1,2}^{\ol\lambda_j\pm\delta}$ are invariant under $(\tau_z)_{z\in\R}$, and, hence, $\mathbb{P}(\Omega_{1,2}^{\ol\lambda_j\pm\delta}) \in\{0,1\}$ by ergodicity. Suppose (for the sake of reaching a contradiction) that $\Omega_{1,2}^{\ol\lambda_j\pm\delta}$ has probability $1$. Then, by arguing exactly as in the proof of Theorem \ref{thm:zek} (with $\Omega_{1,2}$ replaced with $\Omega_{1,2}^{\ol\lambda_j\pm\delta}$), we deduce the existence of an $f^{\ol\lambda_j\pm\delta}\in\mathcal{S}_{st}^{\ol\lambda_j\pm\delta}$ such that $f_1(\,\cdot\,,\omega) \le f^{\ol\lambda_j\pm\delta}(\,\cdot\,,\omega) \le f_2(\,\cdot\,,\omega)$ for all $\omega\in\Omega_{1,2}^{\ol\lambda_j\pm\delta}$. In fact, $f_1(\,\cdot\,,\omega) < f^{\ol\lambda_j\pm\delta}(\,\cdot\,,\omega) < f_2(\,\cdot\,,\omega)$ for $\mathbb{P}$-a.e.\ $\omega\in\Omega_{1,2}^{\ol\lambda_j\pm\delta}$ (by \cite[Lemma A.4]{DKY23+}) since $\ol\lambda_j\pm\delta \neq \ol\lambda_j$. Therefore, $\mathbb{E}[f^{\ol\lambda_j\pm\delta}(0,\omega)] \in E\cap(\theta_1,\theta_2) = \emptyset$, which is a contradiction. We conclude that
	\begin{equation}\label{eq:doruk}
		\mathbb{P}(\Omega_{1,2}^{\ol\lambda_j\pm\delta}) = 0.
	\end{equation}
	
	\smallskip
	
	We will prove assertion (a). (The proof of assertion (b) is similar.) Fix any $\delta\in(0,1)\cap\mathbb{Q}$.
	For every $\omega\in\Omega_{1,2}^{\ol\lambda_j}$ and $c\in\R$, by the Picard-Lindel\"of theorem, there exists a $c'>c$ such that
	\begin{equation}\label{eq:eksik}
		a(x,\omega)f' + H(f,x,\omega) = \ol\lambda_j + \delta
	\end{equation}
	has a unique $\Con^1$ solution $f(\,\cdot\,,\omega\,|\,c)$ on $[c,c')$ with the initial condition $f(c,\omega\,|\,c) = f_1(c,\omega)$. Note that $f'(c,\omega\,|\,c) > f_1'(c,\omega)$ by comparing \eqref{eq:kamgoz} and \eqref{eq:eksik}. Therefore, $f_1(\,\cdot\,,\omega) < f(\,\cdot\,,\omega\,|\,c) < f_2(\,\cdot\,,\omega)$ on $(c,c')$ if $c' - c > 0$ is sufficiently small.
	Let
	\begin{equation}\label{eq:dmf}
	\begin{aligned} d = d(c,\omega) &= \sup\left\{c' > 0:\, \text{$f(\,\cdot\,,\omega\,|\,c)$ solves \eqref{eq:eksik} on $[c,c']$ and satisfies}\right.\\
													   &\hspace{26mm}\left.\text{$f_1(\,\cdot\,,\omega) < f(\,\cdot\,,\omega\,|\,c) < f_2(\,\cdot\,,\omega)$ on $(c,c')$} \right\}.
	\end{aligned}
	\end{equation}

	Fix any $\omega\in\Omega_{1,2}^{\ol\lambda_j}$ and suppose that $d(c,\omega) = \infty$ for all $c\in\R$. Then, for every $M\ge1$, the sequence $(f(\,\cdot\,,\omega\,|-n))_{n\ge M}$ is equi-bounded on $[-M,M]$. Moreover, it is equi-Lipschitz on $[-M,M]$ by \eqref{eq:eksik}. By the Arzel\'a-Ascoli theorem (and a standard diagonalization argument), $(f(\,\cdot\,,\omega\,|-n))_{n\ge 1}$ has a subsequence (not relabeled) that converges locally uniformly to some $f(\,\cdot\,,\omega)\in\Con(\R)$.
	We see from \eqref{eq:eksik} that $(f'(\,\cdot\,,\omega\,|-n))_{n\ge 1}$ converges locally uniformly to some $g(\,\cdot\,,\omega)\in\Con(\R)$. Therefore,
	\begin{align*}
		f(x,\omega) - f(0,\omega) &= \lim_{n\to\infty} \left( f(x,\omega\,|-n) -  f(0,\omega\,|-n) \right)\\
		&= \lim_{n\to\infty} \int_0^x f'(y,\omega\,|-n)dy = \int_0^x g(y,\omega)dy
	\end{align*}
	for all $x\in\R$. We conclude that $f(\,\cdot\,,\omega)\in\mathcal{S}^{\ol\lambda_j+\delta}(f_1(\,\cdot\,,\omega),f_2(\,\cdot\,,\omega),\omega)$, and, hence, $\omega\in\Omega_{1,2}^{\ol\lambda_j+\delta}$.
	
	The event
	\[ \ol\Omega_j=\!\!\!\! \bigcap_{\delta\in(0,1)\cap\mathbb{Q}}\!\!\!\! \left(\Omega_{1,2}^{\ol\lambda_j}\setminus\Omega_{1,2}^{\ol\lambda_j+\delta}\right) \]
	has probability $1$ by \eqref{eq:doruk}. For every $\omega\in\ol\Omega_j$ and $\delta\in(0,1)\cap\mathbb{Q}$, there exists a $c\in\R$ such that $d = d(c,\omega) < \infty$. By the definition of $d$ in \eqref{eq:dmf}, either $f(d,\omega\,|\,c) = f_1(d,\omega)$ or $f(d,\omega\,|\,c) = f_2(d,\omega)$. We can easily rule out the first scenario. Indeed, if $f(d,\omega\,|\,c) = f_1(d,\omega)$, then $f'(d,\omega\,|\,c) > f_1'(d,\omega)$ by comparing \eqref{eq:kamgoz} and \eqref{eq:eksik}, so $f(x,\omega\,|\,c) < f_1(x,\omega)$ for some $x\in(c,d)$,
	which contradicts the definition of $d$. Hence, $f(d,\omega\,|\,c) = f_2(d,\omega)$. Setting $\ul z_1 = c$, $\ol z_2 = d$ and $f_{1,2}^{\ol\lambda_j + \delta}(\,\cdot\,,\omega) = f(\,\cdot\,,\omega\,|\,c)$ concludes the proof of assertion (a).
\end{proof}

\section{Homogenization}\label{sec:homogenization}

In this section, we work under the standing assumptions \eqref{A1}, \eqref{A2} and \eqref{H1}--\eqref{H4} plus any other assumptions that are explicitly stated in each result. Some of the results do not use all of the standing assumptions (e.g., coercivity of $H$ or stationarity of $a$ and $H$), but for the sake of brevity, we do not keep track of this.

\subsection{Viscosity solutions}\label{sub:viscosity}

We recall some basic definitions regarding viscosity solutions and record a comparison principle. All statements are specialized to our particular setting and purposes. For general background on the theory of viscosity solutions of second-order partial differential equations and its applications, we refer the reader to \cite{users,FleSon06}.

We consider the HJ equation \eqref{eq:birhuzur}, which we rewrite for the sake of convenience:
\begin{equation}\label{eq:genelHJ}
	\partial_tu = a(x,\omega)\partial_{xx}^2u + H(\partial_xu,x,\omega),\quad (t,x)\in(0,\infty)\times\R.
\end{equation}
Here, we take $a:\R\times\Omega\to[0,1]$, i.e., we allow $a$ to vanish, so that \eqref{eq:efhuzur} is covered as well.

\begin{definition}\label{def:vis}
	A function $v\in\Con((0,\infty)\times\R)$ is said to be a viscosity subsolution of \eqref{eq:genelHJ} if, for every $(t_0,x_0)\in(0,\infty)\times\R$ and $\varphi\in\Con^2((0,\infty)\times\R)$ such that $v-\varphi$ attains a local maximum at $(t_0,x_0)$, the following inequality holds:
	\[ \partial_t\varphi(t_0,x_0) \le a(x_0,\omega)\partial_{xx}^2\varphi(t_0,x_0) + H(\partial_x\varphi(t_0,x_0),x_0,\omega). \]
	Similarly, a function $w\in\Con((0,\infty)\times\R)$ is said to be a viscosity supersolution of \eqref{eq:genelHJ} if, for every $(t_0,x_0)\in(0,\infty)\times\R$ and $\varphi\in\Con^2((0,\infty)\times\R)$ such that $w-\varphi$ attains a local minimum at $(t_0,x_0)$, the following inequality holds:
	\[ \partial_t\varphi(t_0,x_0) \ge a(x_0,\omega)\partial_{xx}^2\varphi(t_0,x_0) + H(\partial_x\varphi(t_0,x_0),x_0,\omega). \]
	Finally, a function $u\in\Con((0,\infty)\times\R)$ is said to be a viscosity solution of \eqref{eq:genelHJ} if it is both a viscosity subsolution and a viscosity supersolution of this equation.
\end{definition}

Assumption \eqref{eq:well} (which is used in Corollary \ref{cor:velinim}) involves the following notion.

\begin{definition}\label{def:well}
	We say that the Cauchy problem for \eqref{eq:genelHJ} is well-posed in $\UC([0,\infty)\times\R)$ if the following hold:
	\begin{itemize}
		\item [(i)] for every $g\in\UC(\R)$, \eqref{eq:genelHJ} has a viscosity solution $u\in\UC([0,\infty)\times\R)$ such that $u(0,\,\cdot\,) = g(\,\cdot\,)$ on $\R$;
		\item [(ii)] if $u_1,u_2\in\UC([0,\infty)\times\R)$ are viscosity solutions of \eqref{eq:genelHJ}, then
		\[ \sup\{ |u_1(t,x) - u_2(t,x)|:\,(t,x)\in[0,\infty)\times\R \} = \sup\{ |u_1(0,x) - u_2(0,x)|:\,x\in\R \}. \]
	\end{itemize}
\end{definition}

In the rest of this section, we repeatedly use the following comparison principle. It is covered by, e.g., \cite[Proposition 2.3]{DK17} which is a generalization of \cite[Proposition 1.4]{D19}. 

\begin{proposition}\label{prop:comp}
	Let $v\in\UC([0,\infty)\times\R)$ and $w\in\UC([0,\infty)\times\R)$ be, respectively, a viscosity subsolution and a viscosity supersolution of \eqref{eq:genelHJ}.
	If $\{ v(t,\,\cdot\,):\,t\in[0,\infty) \}$ is an equi-Lipschitz family of functions, i.e.,
	\[ \text{there exists an $\ell>0$ such that}\ |v(t,x) - v(t,y)| \le \ell|x-y|\ \ \text{for all}\ t\in[0,\infty)\ \text{and}\ x,y\in\R, \]
	or $\{ w(t,\,\cdot\,):\,t\in[0,\infty) \}$ is an equi-Lipschitz family of functions, then
	\[ \sup\{ v(t,x) - w(t,x):\,(t,x)\in[0,\infty)\times\R \} = \sup\{ v(0,x) - w(0,x):\,x\in\R \}. \]
\end{proposition}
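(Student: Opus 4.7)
The plan is to argue by contradiction using the classical parabolic doubling-of-variables technique, with the key estimate coming from the Lipschitz regularity of $\sqrt{a}$ in \eqref{A2}. Without loss of generality, I take $v$ to be the side whose time slices form an equi-Lipschitz family with constant $\ell$. Suppose for contradiction that there exist $(t_*, x_*)\in(0,\infty)\times\R$ and $\eta>0$ such that
\[ v(t_*,x_*) - w(t_*,x_*) > \sup_{x\in\R}(v(0,x) - w(0,x)) + 3\eta. \]
For small parameters $\alpha,\gamma,\epsilon>0$ and $T>t_*$, I consider on $[0,T]\times\R^2$ the auxiliary functional
\[ \Phi_{\epsilon,\gamma,\alpha}(t,x,y) = v(t,x) - w(t,y) - \frac{(x-y)^2}{2\epsilon} - \alpha t - \gamma\bigl(|x-x_*|^2 + |y-x_*|^2\bigr). \]
Since $v,w\in\UC([0,\infty)\times\R)$ grow at most linearly in $x$, the quadratic penalty forces $\Phi\to-\infty$ at spatial infinity, so $\Phi$ attains its sup at some $(\hat t,\hat x,\hat y)$. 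Standard bookkeeping (using the equi-Lipschitz property of $v$ to compare $\Phi$ at $t=0$ against $\Phi(t_*,x_*,x_*)$) yields, for $\alpha,\gamma,\epsilon$ small enough, that $\hat t>0$, that $|\hat x-x_*|$ and $|\hat y-x_*|$ stay in a set depending only on $\gamma$, and that $(\hat x-\hat y)^2/\epsilon\to 0$ as $\epsilon\to 0$.

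Next, I invoke the parabolic Crandall--Ishii lemma (\cite{users}) to produce real numbers $p_t, q_t, X, Y$ with $p_t - q_t = \alpha$, slopes $p := (\hat x - \hat y)/\epsilon + 2\gamma(\hat x - x_*)$ and $q := (\hat x - \hat y)/\epsilon - 2\gamma(\hat y - x_*)$, and the matrix inequality
\[ \begin{pmatrix} X & 0 \\ 0 & -Y \end{pmatrix} \le \frac{1}{\epsilon}\begin{pmatrix} 1 & -1 \\ -1 & 1 \end{pmatrix} + 2\gamma I, \]
so that the sub/super-solution tests give
\[ p_t \le a(\hat x,\omega) X + H(p, \hat x, \omega), \qquad q_t \ge a(\hat y,\omega) Y + H(q, \hat y, \omega). \]
Applying the matrix inequality to the vector $\bigl(\sqrt{a(\hat x,\omega)}, \sqrt{a(\hat y,\omega)}\bigr)^T$ and using \eqref{A2} produces the critical bound
\[ a(\hat x,\omega) X - a(\hat y,\omega) Y \le \frac{\bigl(\sqrt{a(\hat x,\omega)} - \sqrt{a(\hat y,\omega)}\bigr)^2}{\epsilon} + O(\gamma) \le \frac{\kappa^2(\hat x - \hat y)^2}{\epsilon} + O(\gamma), \]
which vanishes as $\epsilon\to 0$ with $\gamma$ fixed.

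Subtracting the two viscosity inequalities yields
\[ \alpha \le \bigl[a(\hat x,\omega) X - a(\hat y,\omega) Y\bigr] + \bigl[H(p, \hat x, \omega) - H(q, \hat y, \omega)\bigr]. \]
The equi-Lipschitz hypothesis on $v$ pins $|p|,|q|\le\ell+C\gamma$ uniformly in $\epsilon$, so with $R:=\ell+1$ condition \eqref{H3} gives $|H(p,\hat x,\omega)-H(q,\hat x,\omega)|\le K_R|p-q|=O(\gamma)$, while \eqref{H4} yields $|H(q,\hat x,\omega)-H(q,\hat y,\omega)|\le m_R(|\hat x-\hat y|)\to 0$ as $\epsilon\to 0$. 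Passing $\epsilon\to 0$ first, then $\gamma\to 0$, leaves $\alpha\le 0$, contradicting $\alpha>0$.

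The hard part will be controlling the second-order term $a(\hat x)X-a(\hat y)Y$: only the $\sqrt{a}$-Lipschitz condition \eqref{A2} allows this difference to be absorbed via the Ishii matrix inequality, as otherwise the factor $1/\epsilon$ coming from the penalty would blow up. This is the classical ``Ishii trick'' for degenerate elliptic/parabolic comparison, and it is exactly what lets the framework accommodate $a\equiv 0$, as needed for the effective equation \eqref{eq:efhuzur}. The secondary role of the equi-Lipschitz hypothesis is to keep $p$ and $q$ in a fixed bounded set of momenta, so that \eqref{H3} can be applied with a constant $K_R$ that does not degenerate as $\epsilon\to 0$.
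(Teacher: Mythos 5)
Your proposal cannot be compared line-by-line with the paper's own proof, because the paper does not supply one: Proposition~\ref{prop:comp} is dispatched by citing \cite[Proposition~2.3]{DK17} (itself a generalization of \cite[Proposition~1.4]{D19}). What you have written out is, in all essential respects, the standard parabolic doubling-of-variables argument that underlies those references, and the key mechanisms you identify are exactly the right ones: applying the Crandall--Ishii matrix inequality to the vector $\bigl(\sqrt{a(\hat x,\omega)},\sqrt{a(\hat y,\omega)}\bigr)^T$ so that \eqref{A2} absorbs the $1/\epsilon$ blow-up; using the equi-Lipschitz side to pin the momenta $p,q$ in a fixed ball independent of $\epsilon$; and invoking \eqref{H3}--\eqref{H4} to control the Hamiltonian differences. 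The proof therefore is correct in structure and uses the same ideas the paper defers to.

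Two small technical points are glossed over and should be repaired. First, you work on $[0,T]\times\R^2$, but a bare $-\alpha t$ penalty does not rule out the possibility $\hat t=T$; the parabolic Crandall--Ishii lemma requires the maximum to occur at $\hat t<T$. The standard remedy is to add a term of the form $-\sigma/(T-t)$ (or to invoke a version of the lemma valid up to the terminal time). Second, the matrix inequality you quote omits the $\beta A^2$ correction; since $B^2=2B$ for $B=\left(\begin{smallmatrix}1&-1\\-1&1\end{smallmatrix}\right)$, this only multiplies the crucial $\kappa^2(\hat x-\hat y)^2/\epsilon$ bound by a bounded constant and does not affect the passage to the limit, but the statement as you wrote it is not literally what the lemma gives. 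Neither issue changes the outcome; both are routine fixes.

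A last remark on your aside about $a\equiv 0$: you are right that the argument degenerates gracefully when $a$ vanishes, which is why \eqref{eq:genelHJ} (and hence \eqref{eq:efhuzur}) is covered. Note, however, that the proposition is used in the paper precisely to compare against sub/supersolutions that are genuine $\Con^2$ barriers (e.g.\ $\tilde v_{\theta,\delta}$, $\tilde w_{\theta,\delta}$ in Lemma~\ref{lem:isbu}), so the primary role of the equi-Lipschitz hypothesis in the applications is exactly what you identify: it keeps the slopes in a ball on which $K_R$ and $m_R$ are finite.
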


\subsection{Locally uniform convergence for each $\theta\in E$}\label{sub:firat}

\begin{lemma}\label{lem:equi}
	Assume  \eqref{eq:existunique}. For every $\theta\in\R$, there exists an $\Omega_{\mathrm{ue}}^\theta\in\mathcal{F}$ with $\mathbb{P}(\Omega_{\mathrm{ue}}^\theta) = 1$ such that
	$\{ u_\theta^\epsilon(t,\,\cdot\,,\omega):\,\epsilon\in(0,1],\ t\in[0,\infty),\ \omega\in\Omega_{\mathrm{ue}}^\theta \}$ is a uniformly (in $x$) equicontinuous family of functions.
\end{lemma}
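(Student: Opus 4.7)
The plan is to produce a deterministic Lipschitz constant $L=L(\theta)$ and a $\tau$-invariant full-measure event $\Omega_{\mathrm{ue}}^{\theta}$ such that $u_\theta(s,\cdot,\omega)$ is $L$-Lipschitz for every $s\ge 0$ and $\omega\in\Omega_{\mathrm{ue}}^{\theta}$. Because $u_\theta^{\epsilon}(t,x,\omega)=\epsilon\,u_\theta(t/\epsilon,x/\epsilon,\omega)$, such a bound immediately yields $|u_\theta^{\epsilon}(t,x,\omega)-u_\theta^{\epsilon}(t,y,\omega)|\le L|x-y|$ uniformly in $(\epsilon,t,\omega)\in(0,1]\times[0,\infty)\times\Omega_{\mathrm{ue}}^{\theta}$, which is stronger than the uniform equicontinuity claimed.

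To build the required barriers, I invoke Lemma \ref{lem:support} to pick $\theta_1,\theta_2\in E$ with $\theta_1<\theta<\theta_2$ and set $M:=\max(\ol{R}^{\lambda(\theta_1)},\ol{R}^{\lambda(\theta_2)})$. By Lemmas \ref{lem:val} and \ref{lem:emi}, the stationary correctors $f_i:=f_{\theta_i}$ satisfy $|f_i(\cdot,\omega)|\le M$ almost surely, so $\tilde u_i(t,x,\omega):=t\lambda(\theta_i)+F_{\theta_i}(x,\omega)$ are classical solutions of \eqref{eq:birhuzur} that are $M$-Lipschitz in $x$ uniformly in $t$ and $\omega$. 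Let $\Omega_{\mathrm{ue}}^{\theta}:=\Omega_{f_1}^{\lambda(\theta_1)}\cap\Omega_{f_2}^{\lambda(\theta_2)}$, a $\tau$-invariant full-measure event. On it, Birkhoff's theorem (see Remark \ref{rem:grua}) gives $F_{\theta_i}(x,\omega)-\theta_ix=o(|x|)$ as $|x|\to\infty$; since $\theta_1<\theta<\theta_2$, the quantities
\[ C_1(\omega):=\sup_{x\in\R}(F_{\theta_1}(x,\omega)-\theta x),\qquad C_2(\omega):=\sup_{x\in\R}(\theta x-F_{\theta_2}(x,\omega)) \]
are finite. Proposition \ref{prop:comp}, applied with $\tilde u_i$ as equi-Lipschitz barriers against the linear Cauchy datum $\theta x$, then gives the pointwise sandwich
\[ \tilde u_1(t,x,\omega)-C_1(\omega)\ \le\ u_\theta(t,x,\omega)\ \le\ \tilde u_2(t,x,\omega)+C_2(\omega),\quad (t,x)\in[0,\infty)\times\R. \]

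The crucial remaining step is to upgrade this sandwich, whose gap $\tilde u_2-\tilde u_1$ grows linearly in both $t$ and $x$, into a genuine uniform Lipschitz estimate on $u_\theta(s,\cdot,\omega)$. I would localize at each base point $(s,y)$ via the translated $M$-Lipschitz classical solution
\[ \phi_{s,y}^{+}(t,x,\omega):=u_\theta(s,y,\omega)+(t-s)\lambda(\theta_2)+F_{\theta_2}(x,\omega)-F_{\theta_2}(y,\omega), \]
together with its analog $\phi_{s,y}^{-}$ built from $\theta_1$ and $F_{\theta_1}$, both matching $u_\theta$ at $(s,y)$, and compare $u_\theta$ to them on suitable half-strips via Proposition \ref{prop:comp}. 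The sublinearity of $F_{\theta_i}(x,\omega)-\theta x$ controlled by $C_i(\omega)$ provides the correct boundary behavior as $x\to\pm\infty$, and the expected conclusion is $u_\theta(s,x,\omega)-u_\theta(s,y,\omega)\le M(x-y)$ for $x\ge y$, with a symmetric lower bound, so $L=M$ works.

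The main obstacle is precisely this local-to-uniform upgrade: the $\omega$-dependent constants $C_i(\omega)$ surface as boundary terms in the half-strip comparisons and must be absorbed without spoiling the determinism of $L$. I expect this to require either iterating the sliding-barrier construction or invoking a finer ergodic/stationarity argument on shifted correctors $f_i(\,\cdot\,,\tau_z\omega)$. Once this is accomplished, the scaling identity at the beginning closes the chain and delivers the uniform equicontinuity of the family $\{u_\theta^{\epsilon}(t,\cdot,\omega)\}$.
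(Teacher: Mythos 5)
The paper itself does not reproduce a self-contained proof of this lemma: it simply cites \cite[Lemma 4.4]{Y21b} as carrying over verbatim. So a direct comparison is not possible. Evaluated on its own merits, however, your proposal is incomplete: you yourself flag the ``local-to-uniform upgrade'' as an unresolved obstacle, and the issue is real, not cosmetic.

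The initial sandwich $\tilde u_1 - C_1(\omega)\le u_\theta\le \tilde u_2 + C_2(\omega)$ is fine but essentially useless for a Lipschitz estimate: the gap $\tilde u_2 - \tilde u_1$ grows linearly in both $t$ and $x$, so nothing uniform can be read off directly. The proposed repair, comparing $u_\theta$ with the pinned solution $\phi_{s,y}^{+}(t,x,\omega)=u_\theta(s,y,\omega)+(t-s)\lambda(\theta_2)+F_{\theta_2}(x,\omega)-F_{\theta_2}(y,\omega)$ on a half-strip $\{t\ge s,\ x\ge y\}$, is circular as stated: to apply a comparison principle there you need to control $u_\theta$ on the parabolic boundary, including $u_\theta(s,\cdot,\omega)$ on $[y,\infty)$, which is precisely the Lipschitz bound you are trying to establish. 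If instead you compare on the full space-time domain $[0,\infty)\times\R$ starting from $t=0$, then $\phi_{s,y}^{+}(0,x,\omega)$ lies below $\theta x$ as $x\to-\infty$ (since $\theta_2>\theta$), so the comparison only closes after adding an $\omega$-dependent additive constant, which re-introduces exactly the term you were hoping to eliminate. Neither the ``sliding-barrier iteration'' nor the ``finer ergodic argument on $f_i(\cdot,\tau_z\omega)$'' that you allude to is written out, and without one of these the argument does not produce a deterministic modulus, nor even an $\omega$-dependent one valid uniformly in $t$.

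A further, separate caution: the lemma only assumes \eqref{eq:existunique}, not \eqref{eq:duzlip}, and asserts uniform equicontinuity, not a Lipschitz bound. Aiming for a deterministic Lipschitz constant $L=M$ is strictly stronger than required, and it is not clear that this stronger conclusion holds under the hypotheses. The mechanism in the cited source typically has to exploit the coercivity of $H$ (assumption \eqref{H2}) together with the comparison principle in a more quantitative way to obtain an a priori spatial modulus for $u_\theta(t,\cdot,\omega)$ uniformly in $t$; your outline never uses coercivity in this role. As it stands, the proof attempt identifies the right objects (correctors $F_{\theta_i}$ for $\theta_1<\theta<\theta_2$, the scaling identity $u_\theta^\epsilon(t,x)=\epsilon u_\theta(t/\epsilon,x/\epsilon)$, and the $\tau$-invariant full-measure event) but leaves the central estimate unproved.
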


\begin{proof}
	See the proof of \cite[Lemma 4.4]{Y21b} which carries over verbatim.
\end{proof}

\begin{lemma}\label{lem:isbu}
	Assume \eqref{eq:existunique}. For every $\theta\in E$, there exists an $\Omega_0^\theta\in\mathcal{F}$ with $\mathbb{P}(\Omega_0^\theta) = 1$ such that, for every $\omega\in\Omega_0^\theta$ and $T,M>0$,
	\[ \lim_{\epsilon\to0}\sup_{t\in[0,T]}\sup_{x\in[-M,M]} |u_\theta^\epsilon(t,x,\omega) - t\lambda(\theta) - \theta x| = 0. \]
\end{lemma}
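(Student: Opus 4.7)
The plan is to compare $u_\theta^\epsilon$ with the classical rescaled solution built from the stationary corrector of Lemma~\ref{lem:icab}. Setting $F_\theta(x,\omega):=\int_0^x f_\theta(y,\omega)\,dy$, the function
\[ \tilde u_\theta^\epsilon(t,x,\omega):=t\lambda(\theta)+\epsilon F_\theta(x/\epsilon,\omega) \]
is a classical solution of \eqref{eq:parabol}. Take $\Omega_0^\theta$ to be the intersection of $\Omega_{f_\theta}^{\lambda(\theta)}$, the equicontinuity event from Lemma~\ref{lem:equi}, and the full-measure set on which Birkhoff's ergodic theorem applied to $f_\theta$ yields $\lim_{|y|\to\infty}y^{-1}F_\theta(y,\omega)=\theta$. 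Since $|f_\theta|\le \ol R^{\lambda(\theta)}$ by Lemma~\ref{lem:val}, the family $\{\epsilon F_\theta(\cdot/\epsilon,\omega):\epsilon\in(0,1]\}$ is equi-Lipschitz, and this together with the Birkhoff limit yields the locally uniform convergence $\tilde u_\theta^\epsilon(t,x,\omega)\to t\lambda(\theta)+\theta x$ on $[0,\infty)\times\R$ for every $\omega\in\Omega_0^\theta$.

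The next step is to transfer this convergence to $u_\theta^\epsilon$ via Proposition~\ref{prop:comp}. The obstruction is that the initial-data mismatch $\theta x-\epsilon F_\theta(x/\epsilon,\omega)$, while $o(1)$ locally uniformly, is only $o(|x|)$ globally and hence unbounded, so Proposition~\ref{prop:comp} does not apply directly. The remedy is a spatial penalty. Fix $\delta\in(0,1)$. The Birkhoff limit supplies a constant $C_\delta(\omega)\ge 0$ with $|\theta y-F_\theta(y,\omega)|\le\delta|y|+C_\delta(\omega)$ for every $y\in\R$, and rescaling gives
\[ |\theta x-\epsilon F_\theta(x/\epsilon,\omega)|\le\delta|x|+\epsilon C_\delta(\omega)\quad\text{for all $x\in\R$ and $\epsilon\in(0,1]$.} \]
Take $\phi_\delta(x)=\delta\sqrt{x^2+1}\in\Con^2(\R)$, which dominates $\delta|x|$ and satisfies $\|\phi_\delta'\|_\infty,\|\phi_\delta''\|_\infty\le\delta$, and set $K_\delta=\delta(1+K_{\ol R^{\lambda(\theta)}+1})$ with $K_R$ from \eqref{H3}. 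I expect
\[ W_\pm^\epsilon(t,x):=\tilde u_\theta^\epsilon(t,x,\omega)\pm\bigl(\phi_\delta(x)+K_\delta t+\epsilon C_\delta(\omega)\bigr) \]
to be classical super- and subsolutions of \eqref{eq:parabol}, respectively, with $W_-^\epsilon(0,x)\le\theta x\le W_+^\epsilon(0,x)$ on $\R$ by construction. The sub/supersolution check reduces to the pointwise inequality $K_\delta\ge\epsilon a(x/\epsilon,\omega)|\phi_\delta''(x)|+|H(f_\theta(x/\epsilon,\omega)\pm\phi_\delta'(x),x/\epsilon,\omega)-H(f_\theta(x/\epsilon,\omega),x/\epsilon,\omega)|$, which follows from $a\le 1$, $|\phi_\delta''|\le\delta$, and \eqref{H3}.

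Since $W_\pm^\epsilon$ are Lipschitz in $x$ uniformly in $t$ and lie in $\UC([0,\infty)\times\R)$, Proposition~\ref{prop:comp} delivers $W_-^\epsilon\le u_\theta^\epsilon\le W_+^\epsilon$ on $[0,\infty)\times\R$, whence on $[0,T]\times[-M,M]$,
\[ |u_\theta^\epsilon(t,x,\omega)-t\lambda(\theta)-\theta x|\le 2\delta M+\delta+2\epsilon C_\delta(\omega)+K_\delta T. \]
Letting $\epsilon\to 0$ and then $\delta\to 0$ concludes the proof. The hard part is the penalty construction itself: designing a correction that dominates the linearly growing initial-data mismatch while remaining compatible with the equation. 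The uniform $L^\infty$ bound on $f_\theta$ from Lemma~\ref{lem:val} and the local Lipschitz bound \eqref{H3} together allow the $H$-perturbation caused by $\pm\phi_\delta'$ to be absorbed into the time penalty $K_\delta t$; everything else is routine.
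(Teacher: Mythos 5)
Your proof is correct, and it takes a genuinely different route from the paper's. The paper works at the $\epsilon=1$ scale: it builds barriers $\tilde v_{\theta,\delta},\tilde w_{\theta,\delta}$ by adding a penalty $\pm\delta\psi(x)$ (with $\psi(x)=\tfrac{2}{\pi}\int_0^x\arctan$) to the corrector $F_\theta$ and shifting the time slope by $\mp(K_R+1)\delta$; the penalty makes the initial-data comparison go through for large $C(\omega)$, the comparison principle then yields only the pointwise limit $u_\theta^\epsilon(1,0,\omega)\to\lambda(\theta)$, and the upgrade to locally uniform convergence is delegated to a general Egorov--Birkhoff argument (citing \cite{KRV,DK17,AT14}) that uses Lemma~\ref{lem:equi}. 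You instead work directly at the $\epsilon$-scale with the penalty $\phi_\delta(x)=\delta\sqrt{x^2+1}$, quantify the initial-data mismatch via the Birkhoff estimate $|\theta y-F_\theta(y,\omega)|\le\delta|y|+C_\delta(\omega)$, sandwich $u_\theta^\epsilon$ between the explicit barriers $W_\pm^\epsilon$, and read off the locally uniform bound on $[0,T]\times[-M,M]$ with no ergodic post-processing. Your super/subsolution verification is sound (the needed pointwise inequality $K_\delta\ge\epsilon a\,\phi_\delta''\pm\Delta H$ indeed follows from $a\le1$, $|\phi_\delta''|\le\delta$, $|\phi_\delta'|\le\delta$, and \eqref{H3} with $R=\ol R^{\lambda(\theta)}+1$), and Proposition~\ref{prop:comp} applies since the $W_\pm^\epsilon$ are globally Lipschitz in $x$ uniformly in $t$ and lie in $\UC([0,\infty)\times\R)$. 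One small remark: you include the equicontinuity event of Lemma~\ref{lem:equi} in $\Omega_0^\theta$, but your argument never uses it; only $\Omega_{f_\theta}^{\lambda(\theta)}$ and the Birkhoff event are needed. The trade-off is that your more self-contained argument relies on the corrector being two-sided (same slope $\theta$ at $\pm\infty$), which holds for $\theta\in E$; the paper's Egorov--Birkhoff machinery is kept because it is reused verbatim in Lemma~\ref{lem:LBUB}, where the bridge correctors have different slopes $\theta_1\neq\theta_2$ at $\pm\infty$ and a direct sandwich of the form you use here would fail.
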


\begin{proof}
	Recall Lemma \ref{lem:icab}. Fix any $\theta\in E$. For every $\omega\in\Omega_{f_\theta}^{\lambda(\theta)}$, define $F_\theta(\,\cdot\,,\omega):\R\to\R$ by setting
	\[ F_\theta(x,\omega) = \int_0^xf_\theta(y,\omega)dy. \]
	It follows immediately from \eqref{eq:hemcins} that
	\[ \tilde u_\theta(t,x,\omega) = t\lambda(\theta) + F_\theta(x,\omega) \]
	gives a (classical) solution of \eqref{eq:genelHJ} in $\Lip\cap\Con^2([0,\infty)\times\R)$.
	
	For every $\omega\in\Omega_{f_\theta}^{\lambda(\theta)}$ and $\delta\in(0,1)$, define $\tilde v_{\theta,\delta}(\,\cdot\,,\,\cdot\,,\omega):[0,\infty)\times\R\to\R$ by setting
	\begin{align*}
		\tilde v_{\theta,\delta}(t,x,\omega) &= t(\lambda(\theta)- (K_R + 1)\delta) + F_\theta(x,\omega) - \delta\psi(x) - C\\
		&= \tilde u_\theta(t,x,\omega) - t(K_R + 1)\delta - \delta\psi(x) - C,
	\end{align*}
	where $R = \ol R^{\lambda(\theta)} + 1$ (see \eqref{eq:ar}),
	\[ \psi(x) = \frac{2}{\pi}\int_0^x\arctan(y)dy \]
	which satisfies
	\begin{equation}\label{eq:sofra1}
		0 \le \psi''(\cdot) \le 1,\ \ -1 \le \psi'(\cdot) \le 1,
	\end{equation}
	\begin{equation}\label{eq:sofra2}
		\lim_{x\to-\infty}\psi'(x) = -1\ \ \text{and}\ \ \lim_{x\to\infty}\psi'(x) = 1,
	\end{equation}
	and $C>0$ is to be determined. Note that, for every $(t,x)\in(0,\infty)\times\R$,
	\begin{align}
		&\quad\ a(x,\omega)\partial_{xx}^2\tilde v_{\theta,\delta}(t,x,\omega) + H(\partial_x\tilde v_{\theta,\delta}(t,x,\omega),x,\omega)\nonumber\\
		&= a(x,\omega)(\partial_{xx}^2\tilde u_\theta(t,x,\omega) - \delta\psi''(x)) + H(\partial_x\tilde u_\theta(t,x,\omega) - \delta\psi'(x),x,\omega)\nonumber\\
		&\ge a(x,\omega)\partial_{xx}^2\tilde u_\theta(t,x,\omega) - \delta + H(\partial_x\tilde u_\theta(t,x,\omega),x,\omega) - K_R\delta\label{eq:kullan}\\
		&= \partial_t\tilde u_\theta(t,x,\omega) - (K_R + 1)\delta = \partial_t\tilde v_{\theta,\delta}(t,x,\omega).\nonumber
	\end{align}
	The inequality in \eqref{eq:kullan} follows from \eqref{eq:sofra1} and Lemma \ref{lem:val}. Hence, $\tilde v_{\theta,\delta}(\,\cdot\,,\,\cdot\,,\omega)$ is a subsolution of \eqref{eq:genelHJ} in $\Lip\cap\Con^2([0,\infty)\times\R)$.
	
	For every $\omega\in\Omega_{f_\theta}^{\lambda(\theta)}$ and $\delta\in(0,1)$,
	\[ \lim_{x\to-\infty}x^{-1}\tilde v_{\theta,\delta}(0,x,\omega) = \theta + \delta \quad\text{and}\quad \lim_{x\to\infty}x^{-1}\tilde v_{\theta,\delta}(0,x,\omega) = \theta - \delta \]
	by Remark \ref{rem:grua} and \eqref{eq:sofra2}. Therefore,
	\[ \tilde v_{\theta,\delta}(0,x,\omega) \le \theta x = u_\theta(0,x,\omega) \]
	for all $x\in\R$ when $C = C(\theta,\delta,\omega) > 0$ is sufficiently large. By the comparison principle in Proposition \ref{prop:comp}, 
	\[ \tilde v_{\theta,\delta}(t,x,\omega) \le u_\theta(t,x,\omega)\ \text{for all $(t,x)\in[0,\infty)\times\R$}. \]
	In particular,
	\[ \liminf_{\epsilon\to0}u_\theta^\epsilon(1,0,\omega) = \liminf_{\epsilon\to0}\epsilon u_\theta\left(\frac1{\epsilon},0,\omega\right) \ge \lim_{\epsilon\to0}\epsilon \tilde v_{\theta,\delta}\left(\frac1{\epsilon},0,\omega\right) = \lambda(\theta) - (K_R + 1)\delta. \]
	
	Similarly, for every $\omega\in\Omega_{f_\theta}^{\lambda(\theta)}$ and $\delta\in(0,1)$,
	\[ \tilde w_{\theta,\delta}(t,x,\omega) = t(\lambda(\theta) + (K_R + 1)\delta) + F_\theta(x,\omega) + \delta\psi(x) + C \]
	defines a supersolution of \eqref{eq:genelHJ} in $\Lip\cap\Con^2([0,\infty)\times\R)$, and
	\[ \tilde w_{\theta,\delta}(0,x,\omega) \ge \theta x = u_\theta(0,x,\omega) \]
	for all $x\in\R$ when $C = C(\theta,\delta,\omega) > 0$ is sufficiently large. By the comparison principle in Proposition \ref{prop:comp},
	\[ \limsup_{\epsilon\to0}u_\theta^\epsilon(1,0,\omega) = \limsup_{\epsilon\to0}\epsilon u_\theta\left(\frac1{\epsilon},0,\omega\right) \le \lim_{\epsilon\to0}\epsilon \tilde w_{\theta,\delta}\left(\frac1{\epsilon},0,\omega\right) = \lambda(\theta) + (K_R + 1)\delta. \]
	Since $\delta\in(0,1)$ is arbitrary, we deduce that
	\begin{equation}\label{eq:nokta}
		\lim_{\epsilon\to0}u_\theta^\epsilon(1,0,\omega) = \lambda(\theta).
	\end{equation}
	
	Finally, the desired locally uniform convergence follows from \eqref{eq:nokta} and Lemma \ref{lem:equi} by a general and now standard argument involving Egorov's theorem and the Birkhoff ergodic theorem. See \cite[pp.\ 1501--1502]{KRV} or \cite[Lemma 4.1]{DK17} which is based on \cite[Lemma 2.4]{AT14}.
\end{proof}

\subsection{Locally uniform convergence for each $\theta\in E^c$}\label{sub:sirat}

Recall \eqref{eq:gs}. 

\begin{lemma}\label{lem:LBUB}
	Assume  \eqref{eq:existunique}. Suppose that $J\neq\emptyset$ and fix any $j\in J$. Then, there exists an $\ol\Omega_j\in\mathcal{F}$ with $\mathbb{P}(\ol\Omega_j) = 1$ such that
	\begin{equation}\label{eq:fit}
		\lim_{\epsilon\to0} u_\theta^\epsilon(1,0,\omega) = \ol\lambda_j
	\end{equation}
	for all $\theta\in(\theta_L^{(j)},\theta_R^{(j)})$ and $\omega\in\ol\Omega_j$. Moreover, for every $\theta\in(\theta_L^{(j)},\theta_R^{(j)})$, there exists an $\Omega_0^\theta\in\mathcal{F}$ with $\mathbb{P}(\Omega_0^\theta) = 1$ such that, for every $\omega\in\Omega_0^\theta$ and $T,M>0$,
	\begin{equation}\label{eq:locunif3}
		\lim_{\epsilon\to0}\sup_{t\in[0,T]}\sup_{x\in[-M,M]} |u_\theta^\epsilon(t,x,\omega) - t\ol\lambda_j - \theta x| = 0.
	\end{equation}
\end{lemma}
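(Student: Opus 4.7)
The plan is to mirror the strategy of Lemma \ref{lem:isbu} using the bridging functions of Lemma \ref{lem:kibbe} in place of stationary correctors. With the notation $\theta_1=\theta_L^{(j)}$, $\theta_2=\theta_R^{(j)}$, $f_1=f_{\theta_1}$, $f_2=f_{\theta_2}$, take $\ol\Omega_j$ to be the probability-one event provided by Lemma \ref{lem:kibbe}, refined by intersecting with $\Omega_{f_1}^{\ol\lambda_j}\cap\Omega_{f_2}^{\ol\lambda_j}$. For each $\omega\in\ol\Omega_j$ and each $\delta\in(0,1)\cap\mathbb{Q}$, I would build $F_{1,2}^{\ol\lambda_j+\delta}(\,\cdot\,,\omega)\in\Lip\cap\Con^1(\R)$ exactly as in the outline of Section~\ref{sub:ps}, and set the candidate upper barrier
\[ \tilde w_{j,\delta}(t,x,\omega) = t(\ol\lambda_j+\delta) + F_{1,2}^{\ol\lambda_j+\delta}(x,\omega). \]
Dually, using assertion (b) of Lemma \ref{lem:kibbe}, I would assemble $F_{2,1}^{\ol\lambda_j-\delta}(\,\cdot\,,\omega)\in\Lip\cap\Con^1(\R)$ and the lower barrier
\[ \tilde v_{j,\delta}(t,x,\omega) = t(\ol\lambda_j-\delta) + F_{2,1}^{\ol\lambda_j-\delta}(x,\omega). \]

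The crux is to show that $\tilde w_{j,\delta}$ is a viscosity supersolution and $\tilde v_{j,\delta}$ a viscosity subsolution of \eqref{eq:birhuzur}. Away from the four gluing points both functions are classical, and the requisite inequalities follow by direct substitution in \eqref{eq:kamgoz}, \eqref{eq:sahlepvis} or \eqref{eq:kahvevis}, with the off-bridge pieces yielding strict inequality of size $\delta$ because they are built from $f_1,f_2$ which correspond to the level $\ol\lambda_j$. At the gluing point $\ul z_1$, the matching $f_1(\ul z_1,\omega)=f_{1,2}^{\ol\lambda_j+\delta}(\ul z_1,\omega)$ combined with \eqref{eq:kamgoz} and \eqref{eq:sahlepvis} produces
\[ (f_{1,2}^{\ol\lambda_j+\delta})'(\ul z_1,\omega) - f_1'(\ul z_1,\omega) = \delta/a(\ul z_1,\omega) > 0, \]
so the one-sided second derivatives of $F_{1,2}^{\ol\lambda_j+\delta}$ satisfy a strict upward jump at $\ul z_1$. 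Any $\varphi\in\Con^2$ with $\tilde w_{j,\delta}-\varphi$ attaining a local minimum at $(t_0,\ul z_1)$ must therefore have $\partial_x\varphi(t_0,\ul z_1)=f_1(\ul z_1,\omega)$, $\partial_t\varphi(t_0,\ul z_1)=\ol\lambda_j+\delta$, and $\partial_{xx}^2\varphi(t_0,\ul z_1)\le f_1'(\ul z_1,\omega)$ (the smaller of the two one-sided second derivatives), whence by \eqref{eq:kamgoz}
\[ a(\ul z_1,\omega)\partial_{xx}^2\varphi(t_0,\ul z_1) + H(\partial_x\varphi(t_0,\ul z_1),\ul z_1,\omega) \le \ol\lambda_j < \ol\lambda_j+\delta = \partial_t\varphi(t_0,\ul z_1), \]
the required supersolution inequality. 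The gluing point $\ol z_2$, at which $\partial_x^2 F_{1,2}^{\ol\lambda_j+\delta}$ has a strict downward jump, is handled identically using $f_2$; the subsolution checks for $\tilde v_{j,\delta}$ at $\ul z_2,\ol z_1$ are dual.

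With the sub- and supersolution properties in hand, fix $\theta\in(\theta_1,\theta_2)$ and $\omega\in\ol\Omega_j$. Birkhoff's theorem applied to $f_1,f_2$ (Remark \ref{rem:grua}) gives $F_{1,2}^{\ol\lambda_j+\delta}(x,\omega)-\theta x \sim (\theta_1-\theta)x\to+\infty$ as $x\to-\infty$ and $(\theta_2-\theta)x\to+\infty$ as $x\to+\infty$, so $\tilde w_{j,\delta}(0,x,\omega)+C\ge\theta x = u_\theta(0,x,\omega)$ for all $x\in\R$ provided $C=C(\theta,\delta,\omega)>0$ is sufficiently large. Since $\tilde w_{j,\delta}(\,\cdot\,,\,\cdot\,,\omega)$ is globally Lipschitz in $x$, Proposition \ref{prop:comp} yields $u_\theta\le\tilde w_{j,\delta}+C$ on $[0,\infty)\times\R$; evaluating at $(1/\epsilon,0)$, multiplying by $\epsilon$, and letting $\epsilon\to0$ gives $\limsup_{\epsilon\to0}u_\theta^\epsilon(1,0,\omega)\le\ol\lambda_j+\delta$, and then $\delta\downarrow 0$ along $(0,1)\cap\mathbb{Q}$ yields the upper bound $\ol\lambda_j$. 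The dual argument using $\tilde v_{j,\delta}$ supplies the matching lower bound, establishing \eqref{eq:fit}. Crucially, $\ol\Omega_j$ depends only on $j$ and not on $\theta\in(\theta_1,\theta_2)$.

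Finally, to promote \eqref{eq:fit} to the locally uniform convergence \eqref{eq:locunif3}, I would invoke the same Egorov/Birkhoff-plus-equicontinuity argument used at the end of Lemma \ref{lem:isbu} (see \cite[pp.\ 1501--1502]{KRV} and \cite[Lemma 4.1]{DK17}), which relies only on stationarity and on the uniform-in-$x$ equicontinuity of $\{u_\theta^\epsilon\}$ supplied by Lemma \ref{lem:equi}. The main obstacle is the viscosity supersolution check at the second-derivative jumps $\ul z_1$ and $\ol z_2$; once the strict sign of the jump is recorded via the formula $\delta/a$ above, the comparison and scaling steps are routine.
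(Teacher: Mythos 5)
Your proposal is correct and follows essentially the same route as the paper's own proof: build the bridge corrector $F_{1,2}^{\ol\lambda_j+\delta}$ (resp.\ $F_{2,1}^{\ol\lambda_j-\delta}$) from Lemma~\ref{lem:kibbe}, verify the viscosity supersolution (resp.\ subsolution) property by checking the classical pieces and the sign of the second-derivative jump at the gluing points, use Birkhoff's theorem plus comparison (Proposition~\ref{prop:comp}) to obtain matching one-sided bounds on $\lim_{\epsilon\to0}u_\theta^\epsilon(1,0,\omega)$, and finish with the Egorov/Birkhoff argument from Lemma~\ref{lem:isbu}. Your explicit identification of the jump size $\delta/a(\ul z_1,\omega)>0$ is a small clarification of the same min-of-one-sided-second-derivatives step that appears in the paper, but the argument is otherwise identical.
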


\begin{proof}
	
	For the sake of convenience, write $\theta_1= \theta_L^{(j)}$, $\theta_2=\theta_R^{(j)}$, $f_1=f_{\theta_1}$ and $f_2=f_{\theta_2}$. 
	We divide the proof into several steps.
	
	\subsubsection*{Asymptotic upper bound at $(1,0)$}

	Recall Lemma \ref{lem:kibbe}(a). For every $\omega\in\ol\Omega_j$ and $\delta\in(0,1)\cap\mathbb{Q}$, define $F_{1,2}^{\ol\lambda_j + \delta}(\,\cdot\,,\omega):\R\to\R$ by setting
	\begin{equation}\label{eq:ferdi}
		F_{1,2}^{\ol\lambda_j + \delta}(x,\omega) = \begin{cases}
										  	\int_{\ul z_1}^xf_1(y,\omega)\,dy &\text{if}\ x \le \ul z_1,\\ 
											\int_{\ul z_1}^xf_{1,2}^{\ol\lambda_j + \delta}(y,\omega)dy &\text{if}\ \ul z_1 < x < \ol z_2,\\
											\int_{\ul z_1}^{\ol z_2}f_{1,2}^{\ol\lambda_j + \delta}(y,\omega)dy +\int_{\ol z_2}^xf_2(y,\omega)\,dy &\text{if}\ x \ge \ol z_2.
										  \end{cases}
	\end{equation}
	Since
	\begin{align*}
		(F_{1,2}^{\ol\lambda_j + \delta})'(\ul z_1-,\omega) &= f_1(\ul z_1,\omega) = f_{1,2}^{\ol\lambda_j + \delta}(\ul z_1,\omega) = (F_{1,2}^{\ol\lambda_j + \delta})'(\ul z_1+,\omega)\\
		\shortintertext{and}
		(F_{1,2}^{\ol\lambda_j + \delta})'(\ol z_2-,\omega) &= f_{1,2}^{\ol\lambda_j + \delta}(\ol z_2,\omega) = f_2(\ol z_2,\omega) = (F_{1,2}^{\ol\lambda_j + \delta})'(\ol z_2+,\omega),
	\end{align*}
	we deduce that $F_{1,2}^{\ol\lambda_j + \delta}(\,\cdot\,,\omega)\in\Lip\cap\Con^1(\R)$. It is easy to check that
	\begin{equation}\label{eq:lem}
		a(x,\omega)(F_{1,2}^{\ol\lambda_j + \delta})''(\,\cdot\,,\omega) + H((F_{1,2}^{\ol\lambda_j + \delta})'(\,\cdot\,,\omega),x,\omega) \le \ol\lambda_j + \delta,\quad x\in \R,
	\end{equation}
	in the viscosity sense. Indeed, at every $x\in\R\setminus\{\ul z_1,\ol z_2\}$, it follows from \eqref{eq:hemcins}, \eqref{eq:sahlepvis} and \eqref{eq:ferdi} that \eqref{eq:lem} holds in the classical sense. For every $\varphi\in\Con^2(\R)$ such that $F_{1,2}^{\ol\lambda_j + \delta}(\,\cdot\,,\omega) - \varphi$ has a local minimum at $\ul z_1$, we see that $\varphi'(\ul z_1) = f_1(\ul z_1,\omega) = f_{1,2}^{\ol\lambda_j + \delta}(\ul z_1,\omega)$, $\varphi''(\ul z_1) \le \min\{f_1'(\ul z_1,\omega),(f_{1,2}^{\ol\lambda_j + \delta})'(\ul z_1,\omega)\}$, and 
	\[ a(\ul z_1,\omega)\phi''(\ul z_1) + H(\phi'(\ul z_1),\ul z_1,\omega) \le a(\ul z_1,\omega)f_1'(\ul z_1,\omega) + H(f_1(\ul z_1,\omega),\ul z_1,\omega) = \ol\lambda_j. \]
	Similarly, for every $\varphi\in\Con^2(\R)$ such that $F_{1,2}^{\ol\lambda_j + \delta}(\,\cdot\,,\omega) - \varphi$ has a local minimum at $\ol z_2$, we see that $\varphi'(\ol z_2) = f_{1,2}^{\ol\lambda_j + \delta}(\ol z_2,\omega) = f_2(\ol z_2,\omega)$, $\varphi''(\ol z_2) \le \min\{(f_{1,2}^{\ol\lambda_j + \delta})'(\ol z_2,\omega),f_2'(\ol z_2,\omega)\}$, and
	\[ a(\ol z_2,\omega)\phi''(\ol z_2) + H(\phi'(\ol z_2),\ol z_2,\omega) \le a(\ol z_2,\omega)f_2'(\ol z_2,\omega) + H(f_2(\ol z_2,\omega),\ol z_2,\omega) = \ol\lambda_j. \]
	It follows immediately from \eqref{eq:lem} that $w_{j,\delta}(\,\cdot\,,\,\cdot\,,\omega):[0,\infty)\times\R\to\R$, defined by
	\[ w_{j,\delta}(t,x,\omega) = t(\ol\lambda_j + \delta) + F_{1,2}^{\ol\lambda_j + \delta}(x,\omega) + C, \]
	where $C > 0$ is to be determined, is a viscosity supersolution of \eqref{eq:genelHJ} in $\Lip\cap\Con^1([0,\infty)\times\R)$.
	
	For every $\omega\in\ol\Omega_j$,
	\[ \lim_{x\to-\infty}x^{-1}w_{j,\delta}(0,x,\omega) = \theta_1 \quad\text{and}\quad \lim_{x\to\infty}x^{-1}w_{j,\delta}(0,x,\omega) = \theta_2 \]
	by \eqref{eq:ferdi} and Remark \ref{rem:grua}. Therefore, for every $\theta\in(\theta_1,\theta_2)$ and $\omega\in\ol\Omega_j$,
	\[ u_\theta(0,x,\omega) = \theta x  \le w_{j,\delta}(0,x,\omega) \]
	for all $x\in\R$ when $C = C(\theta,\delta,\omega) > 0$ is sufficiently large. 
	By the comparison principle in Proposition \ref{prop:comp}, 
	\[ u_\theta(t,x,\omega) \le w_{j,\delta}(t,x,\omega)\ \text{for all $(t,x)\in[0,\infty)\times\R$}. \]
	In particular,
	\begin{equation}\label{eq:duzUB}
		\limsup_{\epsilon\to0}u_\theta^\epsilon(1,0,\omega) = \limsup_{\epsilon\to0}\epsilon u_\theta\left(\frac1{\epsilon},0,\omega\right) \le \lim_{\epsilon\to0}\epsilon w_{j,\delta}\left(\frac1{\epsilon},0,\omega\right) = \ol\lambda_j + \delta.
	\end{equation}
	
	\subsubsection*{Asymptotic lower bound at $(1,0)$}
	
	Recall Lemma \ref{lem:kibbe}(b). For every $\omega\in\ol\Omega_j$ and $\delta\in(0,1)\cap\mathbb{Q}$, define $F_{2,1}^{\ol\lambda_j - \delta}(\,\cdot\,,\omega):\R\to\R$ by setting
	\begin{equation}\label{eq:tayfur}
		F_{2,1}^{\ol\lambda_j - \delta}(x,\omega) = \begin{cases}
			\int_{\ul z_2}^xf_2(y,\omega)\,dy &\text{if}\ x \le \ul z_2,\\ 
			\int_{\ul z_2}^xf_{2,1}^{\ol\lambda_j - \delta}(y,\omega)dy &\text{if}\ \ul z_2 < x < \ol z_1,\\
			\int_{\ul z_2}^{\ol z_1}f_{2,1}^{\ol\lambda_j - \delta}(y,\omega)dy +\int_{\ol z_1}^xf_1(y,\omega)\,dy &\text{if}\ x \ge \ol z_1.
		\end{cases}
	\end{equation}
	It is easy to check (as in the first step of the proof) that $F_{2,1}^{\ol\lambda_j - \delta}(\,\cdot\,,\omega)\in\Lip\cap\Con^1(\R)$ and
	\[ a(x,\omega)(F_{2,1}^{\ol\lambda_j - \delta})''(\,\cdot\,,\omega) + H((F_{2,1}^{\ol\lambda_j - \delta})'(\,\cdot\,,\omega),x,\omega) \ge \ol\lambda_j - \delta,\quad x\in \R, \]
	in the viscosity sense. It follows immediately
	that $v_{j,\delta}(\,\cdot\,,\,\cdot\,,\omega):[0,\infty)\times\R\to\R$, defined by
	\[ v_{j,\delta}(t,x,\omega) = t(\ol\lambda_j - \delta) + F_{2,1}^{\ol\lambda_j - \delta}(x,\omega) - C, \]
	where $C > 0$ is to be determined, is a viscosity subsolution of \eqref{eq:genelHJ} in $\Lip\cap\Con^1([0,\infty)\times\R)$.
	
	For every $\omega\in\ol\Omega_j$,
	\[ \lim_{x\to-\infty}x^{-1}v_{j,\delta}(0,x,\omega) = \theta_2 \quad\text{and}\quad \lim_{x\to\infty}x^{-1}v_{j,\delta}(0,x,\omega) = \theta_1 \]
	by \eqref{eq:tayfur} and Remark \ref{rem:grua}. Therefore, for every $\theta\in(\theta_1,\theta_2)$ and $\omega\in\ol\Omega_j$,
	\[ u_\theta(0,x,\omega) = \theta x  \ge v_{j,\delta}(0,x,\omega) \]
	for all $x\in\R$ when $C = C(\theta,\delta,\omega) > 0$ is sufficiently large. 
	By the comparison principle in Proposition \ref{prop:comp},
	\[ u_\theta(t,x,\omega) \ge v_{j,\delta}(t,x,\omega)\ \text{for all $(t,x)\in[0,\infty)\times\R$}. \]
	In particular,
	\begin{equation}\label{eq:duzLB}
		\liminf_{\epsilon\to0}u_\theta^\epsilon(1,0,\omega) = \liminf_{\epsilon\to0}\epsilon u_\theta\left(\frac1{\epsilon},0,\omega\right) \ge \lim_{\epsilon\to0}\epsilon v_{j,\delta}\left(\frac1{\epsilon},0,\omega\right) = \ol\lambda_j - \delta.
	\end{equation}
	
	\subsubsection*{Convergence at $(1,0)$}
	
	Since $\delta\in(0,1)\cap\mathbb{Q}$ is arbitrary, combining \eqref{eq:duzUB} and \eqref{eq:duzLB}, we conclude that \eqref{eq:fit} holds for all $\theta\in(\theta_1,\theta_2)$ and $\omega\in\ol\Omega_j$.
	
	\subsubsection*{Locally uniform convergence in $(t,x)$} 
	
	Fix any $\theta\in(\theta_1,\theta_2)$. Recall from Lemma \ref{lem:equi} that there exists an $\Omega_{\mathrm{ue}}^\theta\in\mathcal{F}$ with $\mathbb{P}(\Omega_{\mathrm{ue}}^\theta) = 1$ such that $\{ u_\theta^\epsilon(t,\,\cdot\,,\omega):\,\epsilon\in(0,1],\ t\in[0,\infty),\ \omega\in\Omega_{\mathrm{ue}}^\theta \}$ is a uniformly (in $x$) equicontinuous family of functions.
	By the general argument (involving Egorov's theorem and the Birkhoff ergodic theorem) we cited at the end of the proof of Lemma \ref{lem:isbu}, there exists an $\Omega_0^\theta\subset\ol\Omega_j\cap\Omega_{\mathrm{ue}}^\theta$ with $\mathbb{P}\left(\left(\ol\Omega_j\cap\Omega_{\mathrm{ue}}^\theta\right)\setminus\Omega_0^\theta\right) = 0$ (which implies $\mathbb{P}(\Omega_0^\theta) = 1$) such that \eqref{eq:locunif3} holds for all $\omega\in\Omega_0^\theta$ and $T,M>0$.
\end{proof}

\subsection{Completing the proof of homogenization}\label{sub:homproofs}

\begin{proof}[Proof of Theorem \ref{thm:homlin}]
	The function $\ol H$ (defined in \eqref{eq:alman}) is coercive and locally Lipschitz continuous by Lemmas \ref{lem:emi} and \ref{lem:lam}. Therefore, the Cauchy problem for \eqref{eq:efhuzur} is well-posed in $\UC([0,\infty)\times\R)$ (see, e.g., \cite[Theorem 2.5]{DK17}). For every $\theta\in\R$, observe that the unique (classical and hence viscosity) solution $\ol u_\theta$ of \eqref{eq:efhuzur} with the initial condition $\ol u_\theta(x) = \theta x$, $x\in\R$, is given by
	\[ \ol u_\theta(t,x) = t\ol H(\theta) + \theta x. \]
	
	Let
	\[ \Omega_0 = \bigcap_{\theta\in\mathbb{Q}}\Omega_0^\theta \]
	with $\Omega_0^\theta\in\mathcal{F}$ provided in Lemma \ref{lem:isbu} and Lemma \ref{lem:LBUB} when $\theta\in E$ and $\theta\in E^c$, respectively. Note that $\mathbb{P}(\Omega_0) = 1$ and, for every $\omega\in\Omega_0$ and $\theta\in\mathbb{Q}$, as $\epsilon\to0$, $u_\theta^\epsilon(\,\cdot\,,\,\cdot\,,\omega)$ converges locally uniformly on $[0,\infty)\times\R$ to $\ol u_\theta$. It remains to generalize this statement to all $\theta\in\R$.
	
	Fix any $\theta\in\R$. For every $\omega\in\Omega$ and $\delta\in(0,1)$, define $v_{\theta,\delta}(\,\cdot\,,\,\cdot\,,\omega):[0,\infty)\times\R\to\R$ and $w_{\theta,\delta}(\,\cdot\,,\,\cdot\,,\omega):[0,\infty)\times\R\to\R$ by
	\begin{equation}\label{eq:kamp}
		\begin{aligned}
			v_{\theta,\delta}(t,x,\omega) &= u_\theta(t,x,\omega) - t(K_R + 1)\delta - \delta\psi(x) - C\quad\text{and}\\
			w_{\theta,\delta}(t,x,\omega) &= u_\theta(t,x,\omega) + t(K_R + 1)\delta + \delta\psi(x) + C,
		\end{aligned}
	\end{equation}
	where $R = \ell_\theta(\omega) + 1$ (see \eqref{eq:duzlip}),
	\[ \psi(x) = \frac{2}{\pi}\int_0^x\arctan(y)dy \]
	which satisfies \eqref{eq:sofra1}--\eqref{eq:sofra2} from the proof of Lemma \ref{lem:isbu}, and $C>0$ is to be determined. Let us check that $v_{\theta,\delta}(\,\cdot\,,\,\cdot\,,\omega)$ is a viscosity subsolution of \eqref{eq:birhuzur}. For every $(t_0,x_0)\in(0,\infty)\times\R$ and $\varphi\in\Con^2((0,\infty)\times\R)$ such that $v_{\theta,\delta}(\,\cdot\,,\,\cdot\,,\omega)-\varphi$ attains a local maximum at $(t_0,x_0)$, define $\tilde\varphi\in\Con^2((0,\infty)\times\R)$ by
	\[ \tilde\varphi(t,x) = \varphi(t,x) +  t(K_R + 1)\delta + \delta\psi(x) + C \]
	and note that $u_\theta(\,\cdot\,,\,\cdot\,,\omega) - \tilde\varphi = v_{\theta,\delta}(\,\cdot\,,\,\cdot\,,\omega)-\varphi$.
	Therefore, 
	\begin{align*}
		&\quad\ a(x_0,\omega)\partial_{xx}^2\varphi(t_0,x_0) + H(\partial_x\varphi(t_0,x_0),x_0,\omega)\\
		&= a(x_0,\omega)(\partial_{xx}^2\tilde\varphi(t_0,x_0) - \delta\psi''(x)) + H(\partial_x\tilde\varphi(t_0,x_0) - \delta\psi'(x),x_0,\omega)\\
		&\ge a(x_0,\omega)\partial_{xx}^2\tilde\varphi(t_0,x_0) - \delta + H(\partial_x\tilde\varphi(t_0,x_0),x_0,\omega) - K_R\delta\\
		&\ge \partial_t\tilde\varphi(t_0,x_0) - (K_R + 1)\delta = \partial_t\varphi(t_0,x_0).
	\end{align*}
	Similarly, $w_{\theta,\delta}(\,\cdot\,,\,\cdot\,,\omega)$ is a viscosity supersolution of \eqref{eq:birhuzur}. 
	
	Choose any $\theta'\in\mathbb{Q}$ such that $|\theta - \theta'| < \frac{\delta}{2}$. It follows from \eqref{eq:sofra2} that, when $C = C(\delta) > 0$ is sufficiently large, 
	\[ v_{\theta,\delta}(0,x,\omega) = \theta x - \delta\psi(x) - C \le u_{\theta'}(0,x,\omega) = \theta' x \le \theta x + \delta\psi(x) + C = w_{\theta,\delta}(0,x,\omega) \]
	for all $x\in\R$. By the comparison principle in Proposition \ref{prop:comp},
	\[ v_{\theta,\delta}(t,x,\omega) \le u_{\theta'}(t,x,\omega) \le w_{\theta,\delta}(t,x,\omega)\ \text{for all $(t,x)\in[0,\infty)\times\R$}. \]	
	We combine these inequalities with the definitions in \eqref{eq:kamp} and deduce that
	\[ |u_\theta(t,x,\omega) - u_{\theta'}(t,x,\omega)| \le t(K_R + 1)\delta + \delta|x| + C \]
	for all $\omega\in\Omega$ and $(t,x)\in[0,\infty)\times\R$.
	
	Finally, for every $\omega\in\Omega_0$ and $T,M>0$,
	\begin{align*}
		&\quad\, \limsup_{\epsilon\to0}\sup_{t\in[0,T]}\sup_{x\in[-M,M]} |u_\theta^\epsilon(t,x,\omega) - t\ol H(\theta) - \theta x|\\
		&\le \limsup_{\epsilon\to0}\sup_{t\in[0,T]}\sup_{x\in[-M,M]} \left( |u_{\theta'}^\epsilon(t,x,\omega) - t\ol H(\theta') - \theta' x| + |u_\theta^\epsilon(t,x,\omega) - u_{\theta'}^\epsilon(t,x,\omega)| \right)\\
		&\quad + T|\ol H(\theta) - \ol H(\theta')| + |\theta - \theta'|M\\
		&\le T\left[ (K_R + 1)\delta + |\ol H(\theta) - \ol H(\theta')| \right] + \left[ \delta + |\theta - \theta'| \right] M.
	\end{align*}
	Since $|\theta - \theta'|\le\frac{\delta}{2}$, $\ol H\in\Liploc(\R)$ and $\delta\in(0,1)$ is arbitrary, we conclude that
	\begin{equation}\label{eq:yaz}
		\lim_{\epsilon\to0}\sup_{t\in[0,T]}\sup_{x\in[-M,M]} |u_\theta^\epsilon(t,x,\omega) - t\ol H(\theta) - \theta x| = 0.\qedhere
	\end{equation}
\end{proof}

\begin{remark}\label{rem:orf}
	We had already shown in Lemmas \ref{lem:isbu} and \ref{lem:LBUB} that, for every $\theta\in\R$, the locally uniform convergence in \eqref{eq:yaz} holds on a set of probability $1$ that is allowed to depend on $\theta$. In the proof of Theorem \ref{thm:homlin} that we gave above, we show that this locally uniform convergence holds on a set of probability $1$ that is independent of $\theta$. This last step is in fact covered by \cite[Lemma 4.1]{DK17}, albeit under certain additional mild assumptions (which we did not want to impose), most notably that the Lipschitz constant $\ell_\theta$ in \eqref{eq:duzlip} is locally bounded in $\theta$. 
\end{remark}

\begin{proof}[Proof of Corollary \ref{cor:velinim}]
	The desired result follows readily from Theorem \ref{thm:homlin} and \cite[Theorem 3.1]{DK17}. The set $\Omega_0\in\mathcal{F}$ with $\mathbb{P}(\Omega_0) = 1$ is the one in Theorem \ref{thm:homlin}.
\end{proof}

\bibliographystyle{abbrv}
\bibliography{viscousHJ.bib}

\end{document}